%% file: main.tex
\documentclass[11pt]{article}

\usepackage[ruled,vlined,linesnumbered,hangingcomment]{algorithm2e}
\usepackage[colorlinks=true,linkcolor=blue,citecolor=red,urlcolor=blue]{hyperref}
\usepackage[dvipsnames]{xcolor}
\usepackage{amsfonts,amssymb}
\usepackage{comment}
\usepackage{titlesec}
\usepackage{tikz}
\usepackage{bbding}
\usepackage{pifont}
\usepackage{titletoc}
\usepackage{hhline}
\usepackage{amsmath}
\usepackage{listings}
\usepackage{verbatim}
\usepackage{makecell}
\usepackage{tabu}
\usepackage{graphicx}
\usepackage{enumitem}
\setlist[itemize]{leftmargin=*}
\setlist[enumerate]{leftmargin=*}
\usepackage{mathrsfs}
\usepackage{setspace}
\usepackage{bbm}
\usepackage{mathtools}
\usepackage{titletoc}
\usepackage{geometry} 
\usepackage{threeparttable}
\usepackage{url} 
\usepackage{algpseudocode}

\usepackage{listings}
\input{preamble}

\usepackage[T1]{fontenc}
\usepackage[utf8]{inputenc}
 
\usepackage{amsthm}
\usepackage{xcolor, tikz}  
 \newtheorem{pro}{Proposition}[section]

\newtheorem{lem}{Lemma}[section]

\newtheorem{rem}{Remark}[section]

\counterwithin{theorem}{section}
\counterwithin{assumption}{section}

\usepackage{bm}
 
\DeclareMathOperator{\sign}{sign}

\DeclareMathOperator{\Bern}{\mathrm{Bernoulli}}

\DeclareMathOperator{\mle}{MLE}

\newif\ifjrnotes
\jrnotestrue   

\ifjrnotes
  \newcommand{\jrnote}[1]{%
    {\color{cyan}[JR: #1]}%
  }
\else
  \newcommand{\jrnote}[1]{}
\fi

\title{Minimax Optimal Estimator and Improved Error Rate for the MLE in Logistic Regression with Gaussian Design}  

\author{Junren Chen\thanks{Department of Statistics, Columbia University. (\texttt{jc6315@columbia.edu})}\and Arya Mazumdar\thanks{Hal{\i}c{\i}o\u{g}lu Data Science Institute, UC San Diego. (\texttt{arya@ucsd.edu})} 
}
\date{\today}

\begin{document}
\maketitle

\begin{abstract}
We study finite-sample parameter estimation in logistic regression with Gaussian design, where the goal is to estimate $\btheta^*\in \mathbb{R}^d$ with $R=\|\btheta^*\|_2\ge 1$ from i.i.d. samples $\{(\bx_i,y_i)\}_{i=1}^n,$ $\bx_i \sim N(\bm{0},\bI_d)$, $y_i\mid \bx_i \sim \Bern((1+\exp(-\bx_i^\top \btheta^*))^{-1})$. In this paper, we provide the first minimax optimal estimator, and 
 improve on the best known finite-sample error rate for the maximum likelihood estimator (MLE). 
 These two accomplishments are due to a minimax optimal estimator for
   the parameter norm $R$. First, we establish the minimax lower bound  $\Omega(\sqrt{R^3/n})$ for norm estimation. We then improve the best known norm estimation error rate of the MLE, i.e., $O(\sqrt{R^3d/n})$ from \cite{chardon2024finite}, to $\tilde{O}(\sqrt{R^3/n}+R^2d/n)$. The additional term, $R^2d/n$, appears to be the intrinsic bias of the MLE, as evidenced by the high-dimensional asymptotic theory \cite{zhao2022asymptotic} and numerical examples. We show that, however, this additional term is not information-theoretically necessary. To this end, we construct  an efficient debiased norm estimator that achieves the error rate $O(\sqrt{R^3/n})$   and is therefore minimax optimal. Combining this with the optimal direction estimator given by the MLE, we establish the minimax optimal rate $\Theta(\sqrt{Rd/n}+\sqrt{R^3/n})$ for estimating $\btheta^*$, as well as the improved finite-sample error rate $\tilde{O}(\sqrt{Rd/n}+\sqrt{R^3/n}+R^2d/n)$ for the MLE. Numerical experiments demonstrate that the proposed minimax optimal estimators outperform the MLE.
\end{abstract} 

\section{Introduction}\label{sec:intro}
Logistic regression is arguably one of the most basic models in statistics and machine learning. Conditioning on the covariates $\{\bx_i\}_{i=1}^n$, for some underlying parameter $\btheta^*\in \mathbb{R}^d$, the binary responses are independent and follow  
\begin{align}
y_i \mid\bx_i\sim \Bern(s(\bx_i^\top\btheta^*)),\quad\textrm{where ~}  s(a) = (1+\exp(-a))^{-1}, \label{logitsticmodel}
\end{align}
$i\in [n]:=\{1,2,\cdots,n\}$. This paper focuses on the finite-sample parameter estimation problem in logistic regression. As with recent works \cite{hsu2024sample,matsumoto2025learning,chardon2024finite,kuchelmeister2024finite,chen2026finite}, we study Gaussian design where the covariates $\bx_1,\cdots,\bx_n$ are i.i.d. $N(\bm{0},\bI_d)$ vectors.  Throughout the paper, we write $\bv^*:= \btheta^*/\|\btheta^*\|_2$, $R:=\|\btheta^*\|_2$, and assume $R\ge 1$. We use $C,C_i,c,c_i$ to denote positive universal constants whose values can vary from line to line, write  $T_1 \gtrsim T_2$ (or $T_1=\Omega(T_2)$) if $T_1\ge cT_2$, and write  $T_1\lesssim T_2$ (or $T_1=O(T_2)$) if $T_1\le CT_2$.

One of the most canonical estimators is the maximum likelihood estimator (MLE):\footnote{Note that \cite{hsu2024sample,matsumoto2025learning,chardon2024finite,kuchelmeister2024finite} adopt $\pm1$-valued responses $\tilde{y}_i=2y_i-1$, under which the logistic loss reads $L_n(\btheta)=\frac{1}{n}\sum_{i=1}^n\log(1+\exp(-\tilde{y}_i\bx_i^\top\btheta))$. This is a mathematically equivalent formulation.}
\begin{align} \label{mlelogi}
    \hat{\btheta}_{\mle}:=\textrm{arg}\min_{\btheta\in \mathbb{R}^d}\,\frac{1}{n}\sum_{i=1}^n\big(\log(\exp(\bx_i^\top\btheta)+1)-y_i\bx_i^\top\btheta\big):=\textrm{arg}\min_{\btheta\in \mathbb{R}^d}\,L_n(\btheta),
\end{align}
which exists only when the data is non-separable. 
It was shown by Chardon, Lerasle, and Mourtada \cite[Theorem 1 \& Equation (19)]{chardon2024finite} that under $n\gtrsim Rd$, with high probability (w.h.p.) the MLE exists and yields the direction estimate $\hat{\bv}_{\mle}:= \hat{\btheta}_{\mle}/\| \hat{\btheta}_{\mle}\|_2$ and norm estimate $\hat{R}_{\mle}:=\| \hat{\btheta}_{\mle}\|_2$ satisfying 
\begin{align}\label{chardonrate}
    \| \hat{\bv}_{\rm MLE} -\bv^*\|_2\lesssim \sqrt{\frac{d}{nR}}\qquad\textrm{and}\qquad |\hat{R}_{\mle}-R| \lesssim \sqrt{\frac{R^3d}{n}}. 
\end{align}  
This directly implies the full parameter estimation error rate 
\begin{align}
    \label{chardonratefull}
    \|\hat{\btheta}_{\mle}-\btheta^*\|_2 \lesssim \sqrt{\frac{R^3d}{n}}.
\end{align}
To the best of our knowledge, (\ref{chardonrate})--(\ref{chardonratefull}) are the best known finite-sample parameter estimation guarantees for the MLE; in fact, while some of the earlier results, such as \cite{ostrovskii2021finite,chinot2020robust}, also capture the scaling $O(\sqrt{d/n})$, they exhibit worse dependence on $R$. See \cite[Section 1.2]{chardon2024finite} and references therein. Moreover, (\ref{chardonrate}) suggests that direction estimation becomes easier while norm estimation becomes harder as the parameter norm $R$ increases. Comparable results (with extra logarithmic factors) have also been established in \cite[Theorem 2.1.1]{kuchelmeister2024finite} in a probit model.

More recently, \cite{chen2026finite} analyzed the finite-sample   performance of gradient descent on logistic loss  
\(\hat{\btheta}_{t+1} = \hat{\btheta}_t - \eta \nabla L_n(\hat{\btheta}_t),~t \ge 0\)
but only established estimation guarantees inferior to  (\ref{chardonrate}). More relevant to this paper is \cite[Theorem 3]{chen2026finite} which shows that some efficient estimator, say, $\tilde{\btheta}$, achieves 
\begin{align}\label{ssrate}
    \|\tilde{\btheta}-\btheta^*\|_2=\tilde{O}\bigg(\sqrt{\frac{Rd}{n}}+\sqrt{\frac{R^6}{n}}+\frac{R^2d}{n}\bigg)
\end{align}
under $n=\tilde{\Omega}(Rd+R^4)$. Note that we use $\tilde{O}(\cdot)$ and $\tilde{\Omega}(\cdot)$ to hide the logarithmic factors in $n,d,R$ in $O(\cdot)$ and $\Omega(\cdot)$, respectively. Therefore, (\ref{ssrate}) improves on (\ref{chardonratefull}) whenever $d=\tilde{\Omega}(R^3)$, though one cannot claim that $\tilde{\btheta}$ is a better estimator than the MLE since  (\ref{chardonratefull}) may not be sharp. Indeed, concerning full parameter estimation in such a foundational model, the following questions remain open:
\begin{itemize}
\item Are the MLE guarantees (\ref{chardonrate})--(\ref{chardonratefull}) tight or improvable?  

     \item What is the minimax-optimal Euclidean norm error rate in the estimation of $\btheta^*$?

    \item Is there a  minimax optimal estimator that can be computed from the data efficiently?  
\end{itemize}

On the other hand, the minimax optimal error rate and efficient algorithms for direction estimation are known. Here and hereafter, we shall restrict our attention to $n\gtrsim Rd$.\footnote{Substantial parts of our developments analyze or leverage the MLE, while \cite[Theorem 2]{chardon2024finite} showed that MLE does not exist w.h.p. if $n\lesssim Rd$. Note that the asymptotic, precise phase transition of the existence of the MLE was established in \cite{candes2020phase}.} In this regime, for any fixed $R\ge 1$, \cite[Theorem 1]{hsu2024sample} establishes the minimax lower bound 
\begin{align}
    \inf_{\tilde{\bv}((\bx_i,y_i)_{i=1}^n)} \sup_{\bv^* \in \mathbb{S}^{d-1}} \mathbb{E}_{R\bv^*}\|\tilde{\bv}-\bv^*\|_2 \gtrsim \sqrt{\frac{d}{nR}},
\end{align}
where the expectation is with respect to $(\bx_i,y_i)_{i=1}^n$ generated from $\btheta^*=R\bv^*$, and $\tilde{\bv}((\bx_i,y_i)_{i=1}^n)$ denotes an estimator constructed from $(\bx_i,y_i)_{i=1}^n$. Moreover, this lower bound is attained by gradient descent on the ReLU loss \cite{matsumoto2025learning}, up to logarithmic factors, and is also attained by the MLE up to a universal constant \cite{chardon2024finite}; see (\ref{chardonrate}).

As such, the key to answer the above open questions is to study the estimation of $R$, which is referred to as \emph{parameter strength}, \emph{signal-to-noise ratio}, or \emph{inverse temperature} in the studies of logistic regression (e.g., \cite{hsu2024sample,chardon2024finite}), but will simply be termed (parameter) norm in the present paper.  Our main contributions are as follows: 
\begin{itemize}
    \item In Theorem \ref{thm:minmaxlower}, we establish the minimax lower bound $\Omega(\sqrt{R^3/n})$ for norm estimation; 
    \item In Theorem \ref{thm:upper}, we improve the norm estimation guarantee for the MLE to 
    \begin{align}
        |\hat{R}_{\mle}-R|=\tilde{O}\bigg(\sqrt{\frac{R^3}{n}}+\frac{R^2d}{n}\bigg);\label{mleimprovenorm}
    \end{align}
    \item We develop a debiased norm estimator in Algorithm \ref{alg:debiased-norm-estimate}, and show in Theorem \ref{thm:minimaxupper} that it achieves $O(\sqrt{R^3/n})$ and is therefore minimax optimal. 
\end{itemize}

The first and third bullet points imply that the minimax optimal rate for norm estimation is at the order of $\sqrt{R^3/n}$. In contrast,  our improved norm estimation error rate (\ref{mleimprovenorm}) for the MLE is suboptimal due to an additional {\it dimension-dependent} term $R^2d/n$. We conjecture that this term is intrinsic to the MLE rather than a proof artifact, along with some evidence from the asymptotic theory (Proposition \ref{prop:asymptotic-radial-bias}) and numerical simulations (Figure \ref{fig:normbias}).

Combining these results with known results on direction estimation, we immediately come to the following answers to the above open questions (concerning estimation of $\btheta^*$): 
\begin{itemize}
    \item The minimax optimal rate is at the order of $\sqrt{Rd/n}+\sqrt{R^3/n}$ and can be attained by an efficient algorithm; see Theorems \ref{thm:fullminimax}, \ref{thm:fullminimaxupper}.  
    
    \item The error rate of the MLE is improved to 
    \(\|\hat{\btheta}_{\mle}-\btheta^*\|_2 =\tilde{O}(\sqrt{Rd/n}+\sqrt{R^3/n}+R^2d/n)\), see Theorem \ref{thm:fullmle}.     
\end{itemize}

The derivations of our results are technical. The lower bound in Theorem \ref{thm:minmaxlower} is established by Le Cam's inequality.  The improved error rate of the MLE in Theorem \ref{thm:upper} is built upon Lemma \ref{lem:chardonbound} from \cite{chardon2024finite}, with the key components being a deterministic bound from the first-order optimality and a localized covering argument. For the minimax upper bound in Theorem \ref{thm:minimaxupper}, the development of the debiased norm estimator itself is highly nontrivial.

The remainder of this paper is organized as follows. In Section \ref{sec:main}, we introduce our main results on the parameter norm estimation. Combining with the optimal direction estimation results, our full parameter estimation results are presented in Section \ref{sec:fullestimate}. We provide numerical examples in Section \ref{sec:numerics} to demonstrate our theoretical findings. Based on the proof sketches following Theorems \ref{thm:upper} and \ref{thm:minimaxupper}, further mathematical details are provided in Sections \ref{sec:proofupper} and \ref{sec:proofminimaxupper}. We close the paper with concluding remarks in Section \ref{sec:conclude}. The elementary Gaussian moment bounds, some standard mathematical details in the proofs of Theorems \ref{thm:upper}--\ref{thm:minimaxupper}, the complete proof for the secondary result Proposition \ref{prop:asymptotic-radial-bias}, and the technical lemmas are relegated to the appendices.

\section{Parameter Norm Estimation}\label{sec:main}
We start with a precise formulation of our model.  
\begin{assumption}[Logistic regression with Gaussian design] 
    \label{assump1} For some unknown parameter $\btheta^*\in \mathbb{R}^d$ with norm $R:=\|\btheta^*\|_2\ge 1$ and direction $\bv^*=\btheta^*/\|\btheta^*\|_2$, the observations $(\bx_1,y_1),(\bx_2,y_2),\cdots,(\bx_n,y_n)$ are i.i.d. with $\bx_i\sim N(\bm{0},\bI_d)$, $y_i\mid \bx_i\sim \Bern(s(\bx_i^\top\btheta^*))$. 
\end{assumption}

Before proceeding, we shall introduce a recurring function.
For \(r\ge 0\), as with \cite[Lemma 15]{chen2026finite}, we define 
\(
    q(r):=\mathbb{E}_{g\sim N(0,1)}[g s(rg)].\) 
   This function arises in   the population gradient of the logistic loss; see Lemma \ref{lem:population-gradient}. We also note that $q(r)=O(1)$ in light of 
\(
    \sup_{r\geq 0}|q(r)| \le \mathbb{E}_{g\sim N(0,1)}|g| = \sqrt{2/\pi}\), and that $q'(r) = \mathbb{E}_{g\sim N(0,1)}[g^2 s'(rg)]$ satisfies $q'(r)\asymp (1+r)^{-3}$ over $r\ge 0$; see Lemma \ref{lem:qprime}. Here and hereafter, we write $T_1\asymp T_2$ (or $T_1=\Theta(T_2)$) if $cT_2<T_1<CT_2$ for some universal constants $C>c>0$.

\subsection{Minimax Lower Bound}

For any $\btheta\in \mathbb{R}^d$, we use $\mathbb{E}_{\btheta}$ to denote the expectation with respect to $\{(\bx_i,y_i)\}_{i=1}^n$ generated from the underlying parameter $\btheta$. In the following, we provide an {\it algorithm-independent} minimax lower bound on the parameter norm estimation.

\begin{theorem}[Minimax Lower Bound for Norm Estimation]
\label{thm:minmaxlower}
Under Assumption \ref{assump1}, there exist universal constants
$C>c>0$ such that the following holds.
Fix $R_0\ge 1$ and $\bv^*\in\mathbb{S}^{d-1}$.
If $n\ge CR_0$, then   
\begin{align}
\inf_{\tilde R
=
\tilde R((\bx_i,y_i)_{i=1}^n,\bv^*)}
\sup_{R\in[R_0,2R_0]}
\mathbb{E}_{R\bv^*}
\big|\tilde R-R\big|
\ge
c\sqrt{R_0^3/n}.
\label{normlower}
\end{align}
\end{theorem}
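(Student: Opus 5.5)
We will use Le Cam's two-point method. Take $R_1=R_0$ and $R_2=R_0+\delta$ with $\delta=c_0\sqrt{R_0^3/n}$ for a small universal constant $c_0$. Since $n\ge CR_0$, we have $\delta\le c_0 R_0/\sqrt{C}\le R_0$, so both points lie in $[R_0,2R_0]$. Let $P_j$ denote the joint law of $(\bx_i,y_i)_{i=1}^n$ under $\btheta=R_j\bv^*$; since $\bv^*$ is fixed and known, this law is also the law of the data given $\bv^*$. Le Cam's inequality gives
\[
\inf_{\tilde R}\max_{j=1,2}\mathbb{E}_{R_j\bv^*}|\tilde R-R_j|\ \ge\ \frac{\delta}{4}\bigl(1-\mathrm{TV}(P_1,P_2)\bigr).
\]
It therefore suffices to show that $\mathrm{TV}(P_1,P_2)\le 1/2$. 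By Pinsker's inequality, this follows from $\mathrm{KL}(P_1\|P_2)\le 1/2$.

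The samples are i.i.d., and the covariate law $N(\bm{0},\bI_d)$ is the same under both hypotheses, so
\[
\mathrm{KL}(P_1\|P_2)=n\,\mathbb{E}_{g\sim N(0,1)}\bigl[\mathrm{kl}\bigl(s(R_1g)\,\|\,s(R_2g)\bigr)\bigr],
\]
where $g=\bx^\top\bv^*$ and $\mathrm{kl}$ is the Bernoulli KL divergence. For the logistic link, $\mathrm{kl}(s(a)\|s(b))$ is the Bregman divergence of $a\mapsto\log(1+e^a)$. Hence it equals $\int_a^b (b-t)s'(t)\,dt\le \tfrac12 (b-a)^2\max_{t\in[a,b]}s'(t)$. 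Take $a=R_1g$ and $b=R_2g$, and note that $s'$ is decreasing in $|t|$ while $|t|\ge R_0|g|$ on this segment. This yields the per-sample bound
\[
\mathrm{kl}\le \tfrac12\delta^2 g^2 s'(R_0 g)\le \tfrac12\delta^2 g^2 e^{-R_0|g|}.
\]

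The main step, and the one carrying the $R^3$ scaling, is the Gaussian moment computation $\mathbb{E}[g^2 s'(R_0g)]\lesssim R_0^{-3}$. Equivalently, this is $q'(R_0)\asymp(1+R_0)^{-3}$, which is exactly Lemma \ref{lem:qprime}. A direct check also works: the Gaussian density is at most $1/\sqrt{2\pi}$, so $\mathbb{E}[g^2e^{-R_0|g|}]\le \frac{2}{\sqrt{2\pi}}\int_0^\infty t^2e^{-R_0t}\,dt\lesssim R_0^{-3}$. Combining the pieces,
\[
\mathrm{KL}(P_1\|P_2)\lesssim n\,\delta^2R_0^{-3}=c_0^2\cdot O(1)\le \tfrac12
\]
once $c_0$ is small enough. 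Le Cam's bound then gives $\sup_{R\in[R_0,2R_0]}\mathbb{E}|\tilde R-R|\ge \delta/8=c\sqrt{R_0^3/n}$.

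The only mild obstacle is justifying the bound on the Bernoulli KL by $\delta^2 g^2 s'(R_0 g)$ uniformly over the segment. This holds because $|t|$ only increases from $R_0|g|$ along $[R_1g,R_2g]$. The condition $n\ge CR_0$ is used solely to ensure that $R_2\le 2R_0$.
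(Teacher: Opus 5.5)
Your proposal is correct and follows the paper's proof step for step. You use the same two points $R_0$ and $R_0+c_0\sqrt{R_0^3/n}$, tensorize the KL divergence with the Bernoulli KL written as the Bregman divergence of $\log(1+e^u)$, bound it by $\tfrac12\delta^2g^2s'(R_0|g|)$ using that $s'$ decreases in $|t|$, and apply the moment estimate $\mathbb{E}[g^2s'(R_0g)]\lesssim R_0^{-3}$ (the paper's \eqref{T4bound1} with $p=2$), after which Pinsker and Le Cam give $\delta/8$ exactly as you describe.
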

\begin{proof}
Let $P_R$ denote the distribution of one observation $(\bx,y)$
under $\btheta^*=R\bv^*$, i.e., $\bx\sim N(\bm{0},\bI_d)$, $y\mid \bx \sim \Bern(s(\bx^\top\btheta^*))$.
Set
\(
R_1=R_0,~
R_2=R_0+h,
~h=\kappa\sqrt{ R_0^3/n},\)
where $\kappa>0$ is a sufficiently small universal constant.
Under $n\ge CR_0$, we have $h\le R_0$ and hence
$R_1,R_2\in[R_0,2R_0]$. Let $Z=\bx^\top\bv^*\sim N(0,1)$. We write $D_{\rm KL}(P\|Q)$ and $\|P-Q\|_{\rm TV}$ for the
Kullback--Leibler divergence and total variation distance,
respectively; see Lemma \ref{lem:pinsker} for their definitions. 
Since the distribution of $\bx$ does not depend on $R$,
tensorization and the chain rule of the KL divergence give
\begin{align}
D_{\rm KL}
\left(
P_{R_1}^{\otimes n}
\,\middle\|\,
P_{R_2}^{\otimes n}
\right)
=
n\mathbb{E}_Z
D_{\rm KL}
\left(
\Bern(s(R_1Z))
\,\middle\|\,
\Bern(s(R_2Z))
\right).
\label{eq:norm-lower-kl-chain}
\end{align}
Let $\rho(u)=\log(1+e^u)$ so that
$\rho'(u)=s(u)$ and $\rho''(u)=s'(u)$.
For any $a,b\in\mathbb{R}$, we have 
\[
D_{\rm KL}\big(\Bern(s(a))\|\Bern(s(b))\big)
=
\rho(b)-\rho(a)-\rho'(a)(b-a).
\]
Taylor's theorem, together with the fact that $s'$ is even and
nonincreasing on $[0,\infty)$,  yields
\[
D_{\rm KL}
\left(
\Bern(s(R_1Z))
\,\middle\|\,
\Bern(s(R_2Z))
\right)
\le
\frac{h^2Z^2}{2}s'(R_0|Z|).
\]
Substituting this into \eqref{eq:norm-lower-kl-chain} and invoking
\eqref{T4bound1} with $p=2$, we obtain
\[
D_{\rm KL}
\left(
P_{R_1}^{\otimes n}
\,\middle\|\,
P_{R_2}^{\otimes n}
\right)
\le
C\frac{nh^2}{R_0^3}
=
C\kappa^2.
\]
Choosing $\kappa$ sufficiently small, Pinsker's inequality (Lemma \ref{lem:pinsker}) gives \(
\big\|
P_{R_1}^{\otimes n}
-
P_{R_2}^{\otimes n}
\big\|_{\rm TV}
\le \frac12. 
\)
Le Cam's inequality (Lemma \ref{lem:lecam-two-point}) then implies
\begin{align*}
\inf_{\tilde R
=
\tilde R((\bx_i,y_i)_{i=1}^n,\bv^*)}
\sup_{R\in[R_0,2R_0]}
\mathbb{E}_{R\bv^*}|\tilde R-R|
&\ge
\inf_{\tilde R}
\max_{j\in\{1,2\}}
\mathbb{E}_{R_j\bv^*}|\tilde R-R_j|\ge
\frac{h}{4}
\left(
1-
\left\|
P_{R_1}^{\otimes n}
-
P_{R_2}^{\otimes n}
\right\|_{\rm TV}
\right)\ge
\frac{h}{8}.
\end{align*}
The result follows from the definition of $h$.
\end{proof} 
\begin{rem}
    Note that Theorem \ref{thm:minmaxlower} is an oracle lower bound that remains valid when the true direction $\bv^*$ is revealed to the estimator. Consequently, it holds when $\bv^*$ is unknown.
\end{rem}

\subsection{Improved Guarantee for the MLE}
\begin{theorem}\label{thm:upper}
Under Assumption \ref{assump1}, there exist universal constants $C>c>0$ such that, if $n\ge CRd$, then with probability at least $1-Ce^{-cd}-Cn^{-1}$, $\hat{\btheta}_{\mle}=\hat{R}_{\mle}\hat{\bv}_{\mle}$ in (\ref{mlelogi})  exists and satisfies 
\begin{align}\label{thm1norm}
      |\hat{R}_{\mle}-R| \le C\bigg(\sqrt{\frac{R^3\log(nR)}{n}}+\frac{R^2d\log(nR)}{n}\bigg).
\end{align}
\end{theorem}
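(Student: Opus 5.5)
We will start from the known guarantees of \cite{chardon2024finite}, packaged as Lemma \ref{lem:chardonbound}: for $n\ge CRd$, with probability at least $1-Ce^{-cd}$, the MLE exists and satisfies $\|\hat{\bv}_{\mle}-\bv^*\|_2\lesssim\sqrt{d/(nR)}$ and $|\hat R_{\mle}-R|\lesssim\sqrt{R^3d/n}$. In particular $\hat R_{\mle}\asymp R$, so $\hat{\btheta}_{\mle}$ lies in a localized set
\[
\Theta_{\rm loc}=\Big\{\btheta=r\bv:\ |r-R|\le C\sqrt{R^3d/n},\ \|\bv-\bv^*\|_2\le C\sqrt{d/(nR)}\Big\}.
\]
The plan is to refine only the radial coordinate. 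We project the first-order condition $\nabla L_n(\hat{\btheta}_{\mle})=\bm{0}$ onto the fixed direction $\bv^*$:
\[
0=\frac1n\sum_{i=1}^n\big(s(\bx_i^\top\hat{\btheta}_{\mle})-y_i\big)\bx_i^\top\bv^*.
\]
We split this into a population part and a fluctuation part. By Lemma \ref{lem:population-gradient}, the population gradient at $\btheta=r\bv$ is explicit in terms of $q(\cdot)$ and the angle between $\bv$ and $\bv^*$. Its $\bv^*$-component equals $q(r)$ minus $q(R)$, up to a correction that is quadratic in $\|\bv-\bv^*\|_2$. The quadratic correction comes from the shrinkage of the effective norm $r\langle\bv,\bv^*\rangle$ and the orthogonal component.

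Using $q'(r)\asymp R^{-3}$ on the localized range (Lemma \ref{lem:qprime}), the population part is approximately $q'(R)(r-R)+O(R^{-2}\|\bv-\bv^*\|_2^2)$. With the direction bound, the correction is $O(R^{-2}\cdot d/(nR))$. Dividing by $q'\asymp R^{-3}$ produces exactly the bias term $R^2 d/n$. This is the deterministic bound from first-order optimality:
\[
|\hat R_{\mle}-R|\lesssim R^3\Big(\sup_{\btheta\in\Theta_{\rm loc}}|\Delta_n(\btheta)|\Big)+\frac{R^2 d}{n},
\]
where $\Delta_n(\btheta)$ is the deviation of the empirical $\bv^*$-gradient component from its mean.

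Next we control $\Delta_n$. At $\btheta^*$ itself, $\Delta_n(\btheta^*)=\frac1n\sum(s(\bx_i^\top\btheta^*)-y_i)Z_i$ with $Z_i=\bx_i^\top\bv^*$. Its variance is $\frac1n\mathbb{E}[Z^2 s'(RZ)]\asymp 1/(nR^3)$ by \eqref{T4bound1}. A Bernstein bound then gives $|\Delta_n(\btheta^*)|\lesssim\sqrt{\log n/(nR^3)}$, which after multiplying by $R^3$ gives the $\sqrt{R^3\log(nR)/n}$ term. The increments $\Delta_n(\btheta)-\Delta_n(\btheta^*)$ are handled by a localized covering argument, which is the main obstacle. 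The key point is that the increment involves $(s(\bx^\top\btheta)-s(\bx^\top\btheta^*))Z$. This quantity is small except on the slab $|Z|\lesssim 1/R$, where $s'$ is non-negligible. Its variance is therefore of order $\mathbb{E}[Z^2 s'(RZ)^2(\bx^\top(\btheta-\btheta^*))^2]$, which is much smaller than naive bounds suggest. We need this to be at most $R^{-3}\cdot(\text{stuff})$ so that, after multiplying by $R^3$, the fluctuation over the $d$-dimensional localized set contributes at most $\sqrt{R^3/n}+R^2d/n$ up to logs.

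Concretely, we would do the following:
\begin{enumerate}
\item Cover $\Theta_{\rm loc}$ by a net of size $(nR)^{Cd}$.
\item Apply Bernstein's inequality at each net point, with variance proxy of order $R^{-3}\cdot(\|\btheta-\btheta^*\|^2/R^{2}\ \text{scaled})\cdot$ the relevant Gaussian moments, using truncation of $|\bx_i^\top\bu|$ at $\sqrt{\log n}$ (the source of the $n^{-1}$ failure probability).
\item Take a union bound, which costs a factor $d\log(nR)$.
\item Bridge between net points using Lipschitzness of $s$.
\end{enumerate}
Bounding the supremum by $\sqrt{\text{var}\cdot d\log(nR)}$ and using AM-GM, together with $\|\btheta-\btheta^*\|\lesssim\sqrt{R^3d/n}$, the cross term splits into $\sqrt{R^3\log/n}+R^2d\log/n$.

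If the slab-variance estimate is too loose, the fallback is to localize iteratively. Plug the first improved bound back into $\Theta_{\rm loc}$ and repeat the argument a bounded number of times until the rate stabilizes at \eqref{thm1norm}.
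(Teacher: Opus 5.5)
Your argument is sound in substance, but it takes a genuinely different route from the paper's.

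\textbf{How the paper proceeds.} The paper pairs $\nabla L_n(\hat\btheta_{\mle})=\bm{0}$ with $\hat\bv$ and splits $S_n(\hat R,\hat\bv)=S_n(\hat R,\bv^*)+\{S_n(\hat R,\hat\bv)-S_n(\hat R,\bv^*)\}$.
The first piece is a one-dimensional process over the whole range $r\in[R/2,2R]$. It is handled by Bernstein plus a covering in $r$ that loses only logarithms, and this is where the $n^{-1}$ in the probability comes from.
The second piece is a purely directional increment at a common radius. Its moment bound $C^pp!\,\eta^p/R$ (Lemma \ref{lem:directincrebound}) needs the nearby-model residual estimates of Lemma \ref{lem:1dprobound}, because $y_i$ does not cancel when the multiplier changes from $\bx_i^\top\bv^*$ to $\bx_i^\top\bv$.
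Radially, the paper only ever uses $\hat R\in[R/2,2R]$.

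\textbf{How your route differs.} You pair with the fixed $\bv^*$ and center at $\btheta^*$. The leading fluctuation is then a single i.i.d.\ sum: one Bernstein bound at $t=\log n$, with no supremum and no truncation. The increment $(s(\bx_i^\top\btheta)-s(RZ_i))Z_i$ is free of $y_i$, so only Gaussian integrals are needed.
The price is that the radial displacement now sits inside the $d$-dimensional covering. You genuinely need the radial part of Lemma \ref{lem:chardonbound}, $|\hat R-R|\lesssim\sqrt{R^3d/n}$; with only $r\in[R/2,2R]$ your route would return $\sqrt{R^3d\log(nR)/n}$. You also need the fact that a radial move costs far less variance per unit length than a transverse one.

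\textbf{Bias term.} You have $q(r)\langle\bv,\bv^*\rangle-q(R)=q(r)-q(R)-q(r)(1-\langle\bv,\bv^*\rangle)$ with $q(r)\asymp 1$. So the correction is of order $\|\bv-\bv^*\|_2^2$, not $R^{-2}\|\bv-\bv^*\|_2^2$. With your factor $R^{-2}$, dividing by $q'\asymp R^{-3}$ would give $d/n$. It is the correct factor that produces $R^2d/n$.

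\textbf{Increment variance.} The variance is anisotropic, not $\|\btheta-\btheta^*\|_2^2/R^5$. Write $\btheta=r\bv$ and $\bv=\alpha\bv^*+\beta\bu$ with $\bu\perp\bv^*$ and $T=\bx^\top\bu$. Then $\bx^\top\btheta-RZ=(r\alpha-R)Z+r\beta T$.
The radial part contributes $\lesssim (r\alpha-R)^2/R^5$. The transverse part contributes $\lesssim \beta^2/R\asymp (r\beta)^2/R^3$, and the region $|Z|\lesssim\beta|T|$ adds only $\beta^3\le\beta^2/R$.
Your isotropic proxy undercounts transverse directions by a factor $R^2$. It gives the right number only because the transverse extent $\sqrt{Rd/n}$ of your localized set is a factor $R$ smaller than its radial extent, so both pieces are $\lesssim d/(nR^2)$.
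Done properly, the increment satisfies $\mathbb{E}|X|^p\le C^pp!\,\delta^p/R$ with $\delta\asymp\sqrt{d/(nR)}$, which is exactly the form in Lemma \ref{lem:directincrebound}. The covering with $t\asymp d\log(nR)$ then gives $R^2d\sqrt{\log(nR)}/n$ plus a lower-order Bernstein term, with no AM-GM step.

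\textbf{Fallback.} Your iterative fallback would not rescue a cruder variance estimate. If the transverse rate $\|\btheta-\btheta^*\|_2^2/R^3$ is used in every direction, the feedback coefficient of the radial error is $R^{3/2}\sqrt{d\log(nR)/n}$. This need not be below $1$ when only $n\gtrsim Rd$, so the $R^{-5}$ radial variance is essential, not optional.
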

 
Ignoring logarithmic factors, our norm estimation rate for the MLE reads $\sqrt{R^3/n}+R^2d/n$ and strictly improves on $O(\sqrt{R^3d/n})$ in (\ref{chardonrate}) due to \cite{chardon2024finite}. The complete proof is rather involved, so we only outline the proof architecture here and leave the details to Section \ref{sec:proofupper}. 
\begin{proof}[Proof Sketch for Theorem \ref{thm:upper}]
    Unlike \cite{chardon2024finite} that essentially used $|\hat{R}_{\mle}-R|\le \|\hat{\btheta}_{\mle}-\btheta^*\|_2$, 
we start with a finer deterministic bound derived from the first-order optimality (Lemma \ref{lem:radial-score}):
\begin{align}
    |\hat R_{\mle}-R|
    \lesssim R^3
    \big\{
        \|\hat \bv_{\mle}-\bv^*\|_2^2
        +
        |S_n(\hat R_{\mle},\hat \bv_{\mle})|
    \big\},\label{boundstartpoint}
\end{align}
where $S_n(r,\bv)=\langle \nabla L_n(r\bv)-\mathbb{E}[\nabla L_n(r\bv)],\bv\rangle$, and $\nabla L_n(\btheta)=\frac{1}{n}\sum_{i=1}^n(s(\bx_i^\top\btheta)-y_i)\bx_i$ is the gradient of the logistic loss. By the optimal direction estimation error rate in (\ref{chardonrate}), $R^3\|\hat{\bv}_{\mle}-\bv^*\|_2^2\lesssim R^2d/n$ is within the final error. All that remains is to control $|S_n(\hat{R}_{\mle},\hat{\bv}_{\mle})|$, where the main difficulty lies in   the   dependence of $(\hat{R}_{\mle},\hat{\bv}_{\mle})$ on the data, rendering the term a non-independent sum. Built upon the known bounds in (\ref{chardonrate}) that imply $\hat{R}_{\mle}\in [R/2,2R]$ under $n\gtrsim Rd$, we simply take the supremum and study the worst-case bound:
\[|S_n(\hat{R}_{\mle},\hat{\bv}_{\mle})| \le \sup_{\|\bv-\bv^*\|_2\lesssim (\frac{d}{nR})^{1/2}}\sup_{ r\in[R/2,2R]}\,|S_n(r,\bv)|\le \sup_{r}|S_n(r,\bv^*)|+\sup_{r,\bv}|S_n(r,\bv)-S_n(r,\bv^*)|.\]
We then invoke Bernstein's inequality (cf. Lemma \ref{lem:bernstein210}) and   covering argument to control both processes. 
Specifically,
Lemma \ref{lem:true-direction-process} controls the first process as $\tilde{O}( \frac{1}{\sqrt{nR^3}})$, yielding the final term $\sqrt{R^3/n}$ when multiplied by $R^3$ as in (\ref{boundstartpoint}); note that this is a one-dimensional process and the covering over $r\in [R/2,2R]$
 only loses logarithmic factors (compared to the non-uniform bound).  For the second term, while a covering argument over $\bv$ loses some dependence on the dimension,  it remains sufficient because it is
applied to a localized set $\calV:=\{\bv:\|\bv-\bv^*\|_2\lesssim\sqrt{d/(nR)}\}$ rather than the entire sphere.   
The key observation is that, for a fixed
$(r,\bv)$, Bernstein's inequality is capable of yielding a non-uniform bound on $|S_n(r,\bv)-S_n(r,\bv^*)|$ proportional to $\|\bv-\bv^*\|_2$, precisely $\tilde{O}(\|\bv-\bv^*\|_2/\sqrt{nR})$, which reads $\tilde{O}(\frac{\sqrt{d}}{nR})$ over $\calV$; see (\ref{fixedrvbound}). A covering argument degrades this by a factor of $\sqrt{d}$ to $\tilde{O}(\frac{d}{nR})$, yielding the final bound $\tilde{O}(R^2d/n)$ when further amplified by $R^3$ as in (\ref{boundstartpoint}). This is unproblematic, up to logarithmic factors, since the term $R^2d/n$ already appears from $R^3\|\hat{\bv}_{\mle}-\bv^*\|_2^2$ and cannot be avoided by the current proof mechanism. 
\end{proof}

\subsubsection{Is $R^2d/n$ intrinsic to the MLE?}
Up to logarithmic factors, our Theorems \ref{thm:upper}, \ref{thm:minmaxlower} show that MLE achieves finite-sample norm estimation rate $\sqrt{R^3/n}+R^2d/n$, and that $\sqrt{R^3/n}$ cannot be avoided by any estimator constructed from $\{(\bx_i,y_i)\}_{i=1}^n$. 
As such, to understand the canonical estimator of the MLE, the remaining piece concerns the tightness of the dimension-dependent term $R^2d/n$: is this term fundamental or simply a proof artifact? 
We conjecture that the term \(R^2d/n\) reflects \emph{an intrinsic
positive radial bias of the MLE}, rather than a proof artifact.
In the following, we provide rigorous evidence from the high-dimensional asymptotic theory   \cite{zhao2022asymptotic,sur2019modern}.

\begin{pro}
\label{prop:asymptotic-radial-bias}Fix $R\ge1$, and consider a sequence of logistic models with
$\btheta_n^*\in\mathbb R^{d_n}$ satisfying
$\|\btheta_n^*\|_2=R$ and $d_n/n\to\delta>0$ (in the regime where the MLE exists asymptotically \cite{candes2020phase}). Then there exists a deterministic quantity \(r_{\rm MLE}(\delta,R)\) such that
$
    \hat{R}_{\mle}
    \xrightarrow{\rm a.s.}
    r_{\rm MLE}(\delta,R).$ 
Moreover, for every fixed \(R\),
\[
    r_{\rm MLE}(\delta,R)-R
    =
    \frac{\delta}{2R q'(R)}
    +O_R(\delta^2), \quad \textrm{as}~
     \delta\to 0^+,
\]
where
   $ q'(R)=\mathbb{E}_{g\sim N(0,1)}\!\left[g^2s'(Rg)\right].$ Here, the implicit constant in $O_R(\cdot)$ may depend on the fixed
value of $R$.
\end{pro}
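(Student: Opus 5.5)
The plan is to take the limit from the high-dimensional characterization of the logistic MLE \cite{sur2019modern,zhao2022asymptotic} and then expand it perturbatively as $\delta\to0^+$. In our normalization ($\bx_i\sim N(\bm 0,\bI_{d_n})$, $\|\btheta_n^*\|_2=R$ fixed), the signal strength is $\gamma=R$. The cited theory then says the following. Whenever $(\delta,R)$ lies in the existence region of \cite{candes2020phase}, the nonlinear system in $(\alpha,\sigma,\lambda)$ for logistic regression has a unique solution $(\alpha_\star,\sigma_\star,\lambda_\star)(\delta,R)$. The system consists of three equations: one fixing the variance $\sigma^2$, one fixing the bias factor $\alpha$, and one fixing the proximal parameter $\lambda$. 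Each is an expectation over $(Z_1,Z_2)$ i.i.d. $N(0,1)$ involving $s(RZ_1)$ and the proximal map of $\rho(u)=\log(1+e^u)$. Moreover, the empirical distribution of the coordinates of $\hat\btheta_{\mle}-\alpha_\star\btheta_n^*$ is asymptotically Gaussian with total squared mass $\sigma_\star^2$, and $\langle\hat\btheta_{\mle},\bv^*\rangle\to\alpha_\star R$ almost surely.

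\textbf{Step 1.} Decompose $\hat\btheta_{\mle}=\langle\hat\btheta_{\mle},\bv^*\rangle\bv^*+P^\perp\hat\btheta_{\mle}$, where $P^\perp$ projects onto the orthogonal complement of $\bv^*$. The cited theory gives $\|P^\perp\hat\btheta_{\mle}\|_2^2\to\sigma_\star^2$ almost surely. Hence
\[
\hat R_{\mle}\xrightarrow{\rm a.s.} r_{\rm MLE}(\delta,R):=\sqrt{\alpha_\star^2R^2+\sigma_\star^2}.
\]
This settles the first claim.

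\textbf{Step 2 (small-$\delta$ expansion).} As $\delta\to0$ we have $\alpha_\star\to1$ and $\sigma_\star,\lambda_\star\to0$. I will posit the ansatz
\[
\alpha_\star=1+a\delta+O(\delta^2),\qquad \sigma_\star^2=b\delta+O(\delta^2),\qquad \lambda_\star=c\delta+O(\delta^2),
\]
and expand the proximal map for small $\lambda$:
\[
\mathrm{prox}_{\lambda\rho}(z)=z-\lambda s(z)+\lambda^2 s(z)s'(z)+O(\lambda^3).
\]
Substituting this into the three equations, I match orders in $\delta$, using Stein's identity $q(R)=R\,\mathbb E[s'(Rg)]$ and $q'(R)=\mathbb E[g^2s'(Rg)]$.
\begin{itemize}
\item At leading order, the $\lambda$-equation gives $c=1/\mathbb E[s'(Rg)]$.
\item The variance equation gives $b=1/\mathbb E[s'(Rg)]=R/q(R)$. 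This matches the heuristic that each of the $d-1$ orthogonal coordinates has Fisher information $\mathbb E[s'(Rg)]$.
\item The $\alpha$-equation, taken to second order, gives $a$ as a ratio of Gaussian integrals involving $s''$ and $q'(R)$.
\end{itemize}
Then
\[
r_{\rm MLE}-R=\Big(aR+\frac{b}{2R}\Big)\delta+O_R(\delta^2),
\]
and the remaining task is to verify that $aR+\frac{b}{2R}=\frac{1}{2Rq'(R)}$. This identity should follow from integrating by parts the $s''$ terms, using $\frac{d}{dR}q(R)=q'(R)$ to merge them with $\mathbb E[s'(Rg)]$.

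\textbf{Main obstacle.} I expect two difficulties. The first is justifying the expansion rigorously, because $\delta=0$ is a degenerate point of the system: there $\sigma=\lambda=0$, so the equations are singular. I would first rescale, writing $\sigma^2=\delta\tilde b$, $\lambda=\delta\tilde c$ and $\alpha=1+\delta\tilde a$, divide the equations by the appropriate powers of $\delta$, and show that the rescaled map is $C^1$ in $(\delta,\tilde a,\tilde b,\tilde c)$ up to $\delta=0$ with an invertible Jacobian there. The implicit function theorem then yields smooth branches. Uniqueness of the solution from the cited theory identifies these branches with $(\alpha_\star,\sigma_\star,\lambda_\star)$, which gives the $O_R(\delta^2)$ remainder.

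The second difficulty is the algebraic identification of the coefficient $aR+\frac{b}{2R}$ with $\frac{1}{2Rq'(R)}$. This is error-prone. I would cross-check it against the finite-sample second-order (Cox--Snell type) bias formula for the MLE restricted to the radial direction.
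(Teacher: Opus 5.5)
Your plan has the same architecture as the paper's proof. Both take the a.s.\ limit $r_{\rm MLE}=\sqrt{\alpha_\star^2R^2+\sigma_\star^2}$ from \cite{zhao2022asymptotic} and rescale the proximal parameter and the orthogonal energy by $\delta$. Both apply the implicit function theorem at the base point $b=c=1/\mu$ (write $\mu:=\mathbb{E}[s'(Rg)]$, $\nu:=q'(R)$), and both use uniqueness of the state-equation solution to identify the local branch. Your leading-order values of $b$ and $c$ match the paper's.

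You differ in how you handle the $\alpha$-equation. The paper keeps $\alpha$ unscaled and works in $t=\sqrt{\delta}$. It shows the branch is even in $t$, since $t\mapsto -t$ amounts to $W\mapsto -W$, and reads off the $\alpha$-coefficient by differentiating the second state equation twice in $t$. You instead rescale $\alpha=1+\delta\tilde a$ and divide the $\alpha$-equation, which vanishes identically at $\delta=0$, by $\delta$. This is legitimate: the resulting Jacobian at $\delta=0$ is invertible (determinant $R^2\mu\nu/2$ up to sign, as in the paper), and $\tilde a(0)$ comes straight out of the base point with no second derivatives. However, the noise enters the prox as $\sqrt{\delta\tilde b}\,W$. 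So your claim that the rescaled map is $C^1$ up to $\delta=0$ is not automatic. It relies on exactly the $W\mapsto -W$ symmetry that removes the half-integer powers of $\delta$, and you should make that explicit.

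You leave the coefficient $a$ uncomputed, and it is the whole content of the second claim; it does close. Set $Q:=RZ_1$ and $W:=Z_2$, and expand $\mathbb{E}[s(Q)Q\,s(X)]$ to order $\delta$ around $X=-Q$. Using $\mathbb{E}[s(Q)h(Q)]=\tfrac12\mathbb{E}[h(Q)]$ for even $h$, you get three facts:
\begin{itemize}
\item $\mathbb{E}[s(Q)Q^2s'(Q)]=R^2\nu/2$;
\item $\mathbb{E}[s(Q)Q\,s''(-Q)]=-\tfrac{R}{2}\mu'(R)=\tfrac{\mu-\nu}{2}$, because $q=R\mu$ gives $\nu=\mu+R\mu'$;
\item the $\lambda$-cross term is $\mathbb{E}[Q\,s'(Q)^2]=0$.
\end{itemize}
Hence $-aR^2\nu/2+b(\mu-\nu)/4=0$, i.e.\ $a=\frac{\mu-\nu}{2R^2\mu\nu}$. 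Then $aR+\frac{b}{2R}=\frac{\mu-\nu}{2R\mu\nu}+\frac{1}{2R\mu}=\frac{1}{2R\nu}$, which is your target identity, so no Cox--Snell cross-check is needed.
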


This result follows from some elementary calculations---as $r_{\mle}(\delta,R)$ has been defined in \cite{zhao2022asymptotic} implicitly through a system of equations, all we need here is to expand $r_{\mle}(\delta,R)$ near $\delta=0$. We position this as our side result and therefore postpone the proof to Appendix \ref{sec:small-aspect-ratio-bias}. In the following, we explain why Proposition \ref{prop:asymptotic-radial-bias} suggests that $R^2d/n$ may be intrinsic to the MLE. 

\begin{rem}\label{rem:mlenormopen}
    In the high-dimensional asymptotic regime $d/n\to \delta$, Proposition \ref{prop:asymptotic-radial-bias} asserts that $r_{\mle}(\delta,R)$ satisfies
    \[\lim_{\delta\to 0^+}\frac{r_{\rm MLE}(\delta,R)-R}{\delta} = \frac{1}{2Rq'(R)}\]
    for any fixed $R\ge 1$. Since $1/Rq'(R)\asymp R^2$ and $\delta$ is the limit of the aspect ratio $\frac{d}{n}$, it suggests that the MLE overestimates the true norm $R$ by a bias of order $R^2d/n$. 
    While Proposition \ref{prop:asymptotic-radial-bias} holds only in specific high-dimensional asymptotics and under fixed $R$, the numerical examples in Section \ref{sec:numerics} suggest that $d/(2Rq'(R)n)$  remains an accurate prediction for the average behavior of $\hat{R}_{\mle}-R$ in finite samples, where $n,d$ may not be proportional and $R$ may increase with $n$.  
    However, we shall emphasize the following: while the above Proposition \ref{prop:asymptotic-radial-bias} and the forthcoming numerical examples provide evidence on the tightness of $R^2d/n$ for the MLE, it remains an open question to establish that $\hat{R}_{\mle}-R \gtrsim  R^2d/n$ in finite samples.
\end{rem}

\subsection{Minimax Optimal Estimator}\label{sec:minimaxnorm}
In this section, we construct an efficient algorithm that achieves the minimax lower bound in Theorem \ref{thm:minmaxlower}. 
We devise a debiased norm estimator for this purpose in Algorithm \ref{alg:debiased-norm-estimate}, where we let \(\Pi_{[a,b]}(x):=\min\{\max\{x,a\},b\},~
    [x]_+:=\max\{x,0\}.\)

 \begin{algorithm}[ht!]
\caption{A Debiased Norm Estimator}
\label{alg:debiased-norm-estimate}
\begin{algorithmic}[1]

\Require  
\(\{(\bx_i,y_i)\}_{i=1}^{n}
\subset \mathbb{R}^{d}\times\{0,1\}\) and the positive integers $(m_1,m_2,m_3)$ such that  $m_1+m_2+m_3=n$. Let $\calI_1:=[m_1]$, $\calI_2=[m_1+m_2]\setminus[m_1]$, $\calI_3= [n]\setminus [m_1+m_2]$

\State  \textbf{Step 1: Direction estimation via MLE using $\{(\bx_i,y_i)\}_{i\in\calI_1}$}

\Statex Compute 
\begin{align}
    \breve{\btheta}
\in
\mathop{\arg\min}_{\btheta\in\mathbb{R}^{d}}
\frac{1}{m_1}
\sum_{i=1}^{m_1}
\left[
\log\big(1+
e^{\bx_i^{\top}\btheta}\big)
-y_i\bx_i^{\top} \btheta
\right]. \label{smallmle}
\end{align}

\Statex Let its direction be $\breve{\bv}
:=
 \breve{\btheta}/
\|\breve{\btheta}\|_2$ and norm be $\breve{R}:=\|\breve{\btheta}\|_2$. 

\State \textbf{Step 2: Norm estimate via ``misspecified'' 1-dimensional MLE using $\{(\bx_i,y_i)\}_{i\in\calI_2}$}

\Statex Compute
\begin{align}\label{1dmle}
    \hat{r}
\in
\mathop{\arg\min}_{r\in [0,4\breve{R}]}
\,\frac{1}{m_2}
\sum_{i\in\calI_2} 
\left[
\log\!\left(
1+e^{r\bx_i^{\top}\breve{\bv}}
\right)
-y_i r\bx_i^{\top}\breve{\bv}
\right].
\end{align}

\State \textbf{Step 3: Debiasing the norm estimate using $\{(\bx_i,y_i)\}_{i\in\calI_3}$} 

\Statex Let $\bP_{\breve{\bv}}^\perp =\bI_d- \breve{\bv}\breve{\bv}^\top$. For every $i\in\calI_3$, define
\begin{align} \label{uiconstruct}
    \bw_i
:=\bP_{\breve{\bv}}^\perp\big[
y_i-s(
\hat{r}\,  
\bx_i^{\top}\breve{\bv}
)
\big]
\bx_i. 
\end{align}

\Statex Let \(m_3:=|\mathcal{I}_3|\), and compute
\begin{align} \label{hatEconstruct}
    \hat{B}
:=
\frac{1}{m_3(m_3-1)}
\sum_{\substack{i,j\in\mathcal{I}_3\\i\neq j}}
\left\langle
\bw_i,\bw_j
\right\rangle. 
\end{align}

\Statex Form the corrected estimate $
    \hat{S}
:=
q(\hat{r})^2+\hat{B}.$ Define
\begin{align} \label{hatqdefined}
    \hat{q}
:=
\Pi_{
[\,q(\hat{r}/2),\,q(2\hat{r})\,]
}
\Big(
\sqrt{\big[\hat{S}\big]_+}
\Big).  
\end{align}

\Statex Return the debiased norm estimate  $\hat{R}_{\mathrm{DB}}
:=
q^{-1}(\hat{q}).$ 

\Ensure \(\hat{R}_{\mathrm{DB}}\).

\end{algorithmic}
\end{algorithm}

We illustrate some key ideas behind Algorithm \ref{alg:debiased-norm-estimate}. While we believe that the estimator is practically relevant 
(as we shall see in Section \ref{sec:numerics}), its main aim is to determine the minimax optimal rate for norm estimation. As such, we split the samples into several blocks to render fresh samples for different stages. Our analysis and the algorithm itself rely heavily on this sample splitting trick.

Suppose $m_1=m_2=m_3=n/3$. To motivate the debiasing step, we first consider the norm estimate $\hat{r}$ from (\ref{1dmle}). Note that Steps 1--2 can be viewed as a sample splitting counterpart of the MLE---Step 1 computes MLE to obtain a direction estimate, while Step 2 treats this direction estimate as the exact underlying direction $\bv^*$ and invokes a 1-dimensional MLE to yield a norm estimate.\footnote{As another angle, if Step 2 reuses the data from Step 1, then $\hat{r}\breve{\bv}$ is exactly the MLE on $\{(\bx_i,y_i)\}_{i\in\calI_1}$.}

However, in general $\breve{\bv}$ does not equal $\bv^*$, and therefore (\ref{1dmle}) is a misspecified 1-dimensional MLE that is biased in population. This is characterized by the following lemma. 

\begin{lem}[Population bias of the one-dimensional MLE]
\label{lem:population-bias-1dmle}
Condition on $\breve{\bv}$ and suppose
$
\alpha:=\langle \breve{\bv},\bv^*\rangle\in (0,1].$ 
Then the  population loss associated with \eqref{1dmle},
i.e., $
\mathcal L_{\breve{\bv}}(r)
:=
\mathbb{E}[
\log(1+e^{r\bx_i^\top\breve{\bv}})
-y_ir\bx_i^\top\breve{\bv}], 
$
has a unique minimizer $r^\circ$ characterized by
\begin{equation}
q(r^\circ)=\alpha q(R).
\label{eq:pseudo-true-radius}
\end{equation}
Moreover, if $R\geq1$ and
$
1-\alpha\leq \frac{c_0}{R^2}$ 
for a sufficiently small universal constant $c_0>0$, then
$r^\circ\in[3R/4,R]$ and
\begin{equation}
R-r^\circ
\asymp
R^3(1-\alpha)
=
\frac{R^3}{2}\|\breve{\bv}-\bv^*\|_2^2.
\label{eq:population-attenuation}
\end{equation}
\end{lem}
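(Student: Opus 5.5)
The plan is to compute the population score of the one-dimensional loss explicitly and then reduce everything to properties of the scalar function $q$. Conditioning on $\breve{\bv}$ (which is independent of the fresh sample), differentiation under the expectation, justified by dominated convergence since $|s|\le 1$ and $\bx_i$ is Gaussian, gives
\[
\mathcal L_{\breve{\bv}}'(r)=\mathbb{E}\big[(s(r\bx_i^\top\breve{\bv})-y_i)\bx_i^\top\breve{\bv}\big]=\mathbb{E}\big[s(rU)U\big]-\mathbb{E}\big[s(RZ)U\big],
\]
where $U=\bx_i^\top\breve{\bv}$ and $Z=\bx_i^\top\bv^*$ are jointly standard Gaussian with correlation $\alpha$. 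The first term is $q(r)$. For the second, write $U=\alpha Z+\sqrt{1-\alpha^2}\,W$ with $W$ independent of $Z$; this gives $\mathbb{E}[s(RZ)U]=\alpha q(R)$, and it is the computation behind Lemma \ref{lem:population-gradient}. Hence $\mathcal L_{\breve{\bv}}'(r)=q(r)-\alpha q(R)$. The second derivative is $\mathcal L_{\breve{\bv}}''(r)=q'(r)>0$, using $q'(r)\asymp(1+r)^{-3}$ from Lemma \ref{lem:qprime}, so the loss is strictly convex and $q$ is strictly increasing. We have $q(0)=\mathbb{E}[g]/2=0$, and $\alpha q(R)\in(0,q(R)]$ because $\alpha>0$. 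By the intermediate value theorem there is then a unique $r^\circ\in(0,R]$ with $q(r^\circ)=\alpha q(R)$, and strict convexity makes it the unique minimizer. This proves \eqref{eq:pseudo-true-radius}; note also that $r^\circ\le R$.

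For the quantitative part, I would apply the mean value theorem to get $q(R)-q(r^\circ)=q'(\xi)(R-r^\circ)$ for some $\xi\in[r^\circ,R]$, while $q(R)-q(r^\circ)=(1-\alpha)q(R)$. Since $q$ is increasing, $q(R)\ge q(1)>0$ for $R\ge1$, and $q(R)\le\sqrt{2/\pi}$, so $q(R)\asymp 1$. If we also know $\xi\ge 3R/4$, then $q'(\xi)\asymp R^{-3}$ by Lemma \ref{lem:qprime}, and therefore $R-r^\circ=(1-\alpha)q(R)/q'(\xi)\asymp R^3(1-\alpha)$. The final identity in \eqref{eq:population-attenuation} is just $\|\breve{\bv}-\bv^*\|_2^2=2-2\alpha$.

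The main obstacle is showing $r^\circ\ge 3R/4$, which is needed to lock $\xi$ into $[3R/4,R]$. I would argue by contradiction using monotonicity of $q$. The integral mean value bound gives $q(R)-q(3R/4)=\int_{3R/4}^R q'(t)\,dt\ge c\,(R/4)(1+R)^{-3}\ge c' R^{-2}$ for $R\ge1$. On the other hand, $q(R)-q(r^\circ)=(1-\alpha)q(R)\le \sqrt{2/\pi}\,c_0R^{-2}$. Taking $c_0$ small enough that $\sqrt{2/\pi}\,c_0<c'$ gives $q(r^\circ)>q(3R/4)$, and strict monotonicity of $q$ then yields $r^\circ>3R/4$. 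This completes the argument, with all constants universal.
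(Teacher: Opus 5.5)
Your proposal is correct and follows essentially the same route as the paper. Like the paper, you compute $\mathcal L_{\breve{\bv}}'(r)=q(r)-\alpha q(R)$ and use strict monotonicity of $q$. You then compare $(1-\alpha)q(R)\le\sqrt{2/\pi}\,c_0R^{-2}$ with $q(R)-q(3R/4)\gtrsim R^{-2}$ to get $r^\circ\ge 3R/4$, and finish with the mean value theorem and $q'\asymp R^{-3}$ on $[3R/4,R]$. Your intermediate-value argument from $q(0)=0$ is a small but welcome addition: it establishes that the minimizer exists, which the paper's appeal to strict convexity leaves implicit.
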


\begin{proof}
Let $g:=\bx_i^\top\breve{\bv}\sim N(0,1)$, differentiating the conditional
population loss gives
$
\mathcal L_{\breve{\bv}}'(r)
=
\mathbb{E}\!\left[g\{s(rg)-y_i\}\right]= q(r)- \mathbb{E}[gy_i].$ Moreover, by rotational invariance, 
\[\mathbb{E}[gy_i] = \mathbb{E}\big[\mathbb{E}[\bx_i^\top\breve{\bv}y_i\mid \bx_i]\big] = \mathbb{E}\big[s(R\bx_i^\top\bv^*)\bx_i^\top\breve{\bv}\big]= \langle\breve{\bv},\bv^*\rangle\mathbb{E}\big[s(R\bx_i^\top\bv^*)\bx_i^\top\bv^*\big]=\alpha \mathbb{E}[s(Rg)g]= \alpha q(R).\]
Therefore, we have
$
\mathcal L_{\breve{\bv}}'(r)=q(r)-\alpha q(R).$ 
Because
$
q'(r)=\mathbb{E}_{g\sim N(0,1)}[g^2s'(rg)]>0,$ 
the loss is strictly convex, and its unique minimizer satisfies
\eqref{eq:pseudo-true-radius}. Since $q$ is strictly increasing,
$r^\circ\leq R$, with strict inequality when $\alpha<1$. For the quantitative statement, we start with
$
q(R)-q(r^\circ)=(1-\alpha)q(R).$
The assumption $1-\alpha\leq c_0/R^2$, together with
$q(R)-q(3R/4)\gtrsim R^{-2}$, implies $r^\circ\geq3R/4$ when $c_0$
is sufficiently small. By the mean-value theorem, for some
$\tilde r\in[r^\circ,R]$,
\[
R-r^\circ
=
\frac{(1-\alpha)q(R)}{q'(\tilde r)}.
\]
Since $q(R)\asymp1$ and
$q'(\tilde r)\asymp R^{-3}$ uniformly over
$\tilde r\in[3R/4,R]$, we obtain
$
R-r^\circ\asymp R^3(1-\alpha)=\frac{R^3}{2}\|\breve{\bv}-\bv^*\|_2^2.$ 
\end{proof}

In light of Lemma~\ref{lem:population-bias-1dmle}, even if infinitely many samples are available, the minimizer of (\ref{1dmle}) approximates $r^\circ= q^{-1}(\alpha q(R))$ that underestimates the true norm $R$ by a gap of order $R^3\|\breve{\bv}-\bv^*\|_2^2$. Substituting $\|\breve{\bv}-\bv^*\|_2^2 =O(\frac{d}{nR})$ from (\ref{chardonrate}), (\ref{1dmle}) incurs a population bias  $R-r^\circ=O(\frac{R^2d}{n})$.  Moreover, a convex localization argument yields the high-probability bound $|\hat{r}-r^\circ|=\tilde{O}(\sqrt{R^3/n})$ (this is indeed an important piece of our analysis; see Lemma \ref{lem:profile-local}). Taken collectively, \[|\hat{r}-R|=\tilde{O}\bigg(\frac{R^2d}{n}+ \sqrt{\frac{R^3}{n}}\bigg),\]
which is comparable to our improved MLE norm estimation guarantee (\ref{thm1norm}) and does not match the lower bound $\Omega(\sqrt{R^3/n})$. 

As such, toward a minimax optimal norm estimate, we have to debias $\hat{r}$ and get rid of the population bias term $R^2d/n$. This is accomplished by Step 3 and is the main innovation of Algorithm \ref{alg:debiased-norm-estimate}. We now elaborate the idea. In view of Lemma~\ref{lem:population-bias-1dmle}, $q(\hat{r})^2$ can serve as a surrogate of $q(r^\circ)^2 = \alpha^2 q(R)^2$. Also, due to the monotonicity of $q$, an accurate estimate of $q(R)^2$ immediately yields a norm estimate by applying $q^{-1}$. Notice that $q(R)^2 = \alpha^2q(R)^2 + (1-\alpha^2) q(R)^2,$
all that remains is to construct a surrogate of $(1-\alpha^2)q(R)^2.$ In light of 
$\|\bP_{\breve{\bv}}^\perp\bv^*\|_2^2=1-(\breve{\bv}^\top\bv^*)^2=1-\alpha^2$,
this appeals to the technical constructions in (\ref{uiconstruct})--(\ref{hatEconstruct}), whose roles are justified by the following lemma. 
\begin{lem}\label{lem:EwEB}
    In the setting of Algorithm \ref{alg:debiased-norm-estimate}, by conditioning on $(\hat{r},\breve{\bv})$ and using the randomness of $\{(\bx_i,y_i)\}_{i\in\calI_3}$,  
    \[
        \mathbb{E}\bw_i =q(R)\bP^\perp_{\breve{\bv}}\bv^*,\qquad 
        \mathbb{E}\hat{B} = (1-\alpha^2) q(R)^2.
    \] 
\end{lem}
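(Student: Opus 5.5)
Since the samples in $\calI_3$ are independent of $(\hat r,\breve{\bv})$, which are computed from $\calI_1\cup\calI_2$, I will condition on $(\hat r,\breve{\bv})$ throughout. Under this conditioning, the $\bw_i$ for $i\in\calI_3$ are i.i.d., and the plan is to compute $\mathbb{E}\bw_i$ first and then deduce $\mathbb{E}\hat B$ from it.

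\textbf{The mean of $\bw_i$.} First use the tower property over $y_i$ given $\bx_i$, which gives
\[
\mathbb{E}\bw_i=\bP^\perp_{\breve{\bv}}\,\mathbb{E}\big[\{s(R\bx_i^\top\bv^*)-s(\hat r\bx_i^\top\breve{\bv})\}\bx_i\big].
\]
Next, decompose $\bx_i=(\bx_i^\top\breve{\bv})\breve{\bv}+\bP^\perp_{\breve{\bv}}\bx_i$.
\begin{itemize}
\item The $\breve{\bv}$-component is annihilated by $\bP^\perp_{\breve{\bv}}$.
\item For the term involving $s(\hat r\bx_i^\top\breve{\bv})$, note that $\bP^\perp_{\breve{\bv}}\bx_i$ is a centered Gaussian vector independent of $\bx_i^\top\breve{\bv}$. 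Hence $\mathbb{E}[s(\hat r\bx_i^\top\breve{\bv})\bP^\perp_{\breve{\bv}}\bx_i]=\bm 0$.
\item For the remaining term, apply the same rotational-invariance argument as in Lemma \ref{lem:population-bias-1dmle}. Writing $g=\bx_i^\top\bv^*$ and $\bx_i=g\bv^*+\bP^\perp_{\bv^*}\bx_i$, the part $\bP^\perp_{\bv^*}\bx_i$ is independent of $g$ and centered. This gives $\mathbb{E}[s(Rg)\bx_i]=\mathbb{E}[gs(Rg)]\bv^*=q(R)\bv^*$.
\end{itemize}
Applying $\bP^\perp_{\breve{\bv}}$ to the result gives $\mathbb{E}\bw_i=q(R)\bP^\perp_{\breve{\bv}}\bv^*$. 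Equivalently, the last step is Stein's lemma, $\mathbb{E}[s(R\bx^\top\bv^*)\bx]=R\,\mathbb{E}[s'(Rg)]\bv^*$, combined with the identity $q(R)=R\,\mathbb{E}s'(Rg)$.

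\textbf{The mean of $\hat B$.} For $i\neq j$ in $\calI_3$, conditional independence gives $\mathbb{E}\langle\bw_i,\bw_j\rangle=\langle\mathbb{E}\bw_i,\mathbb{E}\bw_j\rangle=q(R)^2\|\bP^\perp_{\breve{\bv}}\bv^*\|_2^2$. Since $\bP^\perp_{\breve{\bv}}$ is an orthogonal projection, $\|\bP^\perp_{\breve{\bv}}\bv^*\|_2^2=1-\alpha^2$. Averaging over the $m_3(m_3-1)$ ordered pairs therefore yields $\mathbb{E}\hat B=(1-\alpha^2)q(R)^2$. The role of the U-statistic form is exactly to exclude diagonal terms, whose means would add unwanted variance contributions.

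There is no serious obstacle here. The only points needing care are to state the conditional independence from sample splitting explicitly, and to check integrability, which is immediate because $|y_i-s(\cdot)|\le1$ and $\bx_i$ has finite moments.
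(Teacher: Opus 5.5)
Your proof is correct and matches the paper's argument. The paper obtains $\mathbb{E}[y_i-s(\hat r\bx_i^\top\breve{\bv})]\bx_i=q(R)\bv^*-q(\hat r)\breve{\bv}$ from Lemma \ref{lem:population-gradient} and then lets $\bP^\perp_{\breve{\bv}}$ remove the $\breve{\bv}$ term; you instead project first and eliminate the $\hat r$-term using the independence of $\bP^\perp_{\breve{\bv}}\bx_i$ and $\bx_i^\top\breve{\bv}$, which is the same rotational-invariance fact, and your computation of $\mathbb{E}\hat B$ is identical to the paper's.
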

\begin{proof}
By Lemma \ref{lem:population-gradient}, 
we have \(\mathbb{E}\big[y_i-s(\hat{r}\bx_i^\top\breve{\bv})\big]\bx_i = q(R)\bv^* - q(\hat{r})\breve{\bv},\)
and therefore 
\[\mathbb{E}\bw_i = \bP^\perp_{\breve{\bv}}\big(q(R)\bv^*-q(\hat{r})\breve{\bv}\big) = q(R)\bP^\perp_{\breve{\bv}}\bv^*.\]
Since $\bx_i$'s are independent, we have
\[\mathbb{E}\hat{B} = \mathbb{E}\langle\bw_1,\bw_2\rangle = \langle \mathbb{E}\bw_1,\mathbb{E}\bw_2\rangle = q(R)^2\|\bP_{\breve{\bv}}^\perp\bv^*\|_2^2=q(R)^2\big(1-\langle\breve{\bv},\bv^*\rangle^2\big)=  (1-\alpha^2)q(R)^2.\]
The proof is complete. 
\end{proof}

We are ready to present our main result on the minimax optimality of Algorithm \ref{alg:debiased-norm-estimate}.

\begin{theorem}[Minimax optimality of Algorithm \ref{alg:debiased-norm-estimate}] 
    \label{thm:minimaxupper} Under Assumption \ref{assump1}, let $\hat{R}_{\rm DB}$  be the output of Algorithm \ref{alg:debiased-norm-estimate}. There exist universal constants $C>c>0$ such that, given any $t\ge 1$, if $\min\{m_1,m_2,m_3\}\ge cn,~n\ge C R(d+t)$, then with probability at least $1-C\exp(-ct)$,
    \begin{gather} \label{thm3:norm}
        |\hat{R}_{\rm DB}-R|\le C\sqrt{R^3t/n}\,.
    \end{gather}
\end{theorem}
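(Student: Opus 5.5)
The proof will condition successively on the three sample blocks and chain three high-probability events, working throughout on the scale of $q$ and transferring back to $R$ at the end via $q'(r)\asymp R^{-3}$ for $r\asymp R$ (Lemma \ref{lem:qprime}).

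\textbf{Step 1.} On $\calI_1$ we invoke \cite{chardon2024finite} (the bounds (\ref{chardonrate})) with $m_1\ge cn\ge cCR(d+t)$. With probability $1-Ce^{-ct}$ this gives $\breve R\in[R/2,2R]$ and $\|\breve\bv-\bv^*\|_2^2\lesssim (d+t)/(nR)$. Consequently $1-\alpha\lesssim (d+t)/(nR)\le c_0/R^2$, since $n\ge CR(d+t)$, and Lemma \ref{lem:population-bias-1dmle} applies. It yields $r^\circ\in[3R/4,R]$ and $R-r^\circ\lesssim R^2(d+t)/n\le R$, which also ensures $r^\circ$ lies inside the constraint set $[0,4\breve R]$.

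\textbf{Step 2.} Conditionally on $\breve\bv$, the problem (\ref{1dmle}) is a one-dimensional convex $M$-estimation whose population minimizer is $r^\circ$. We use convex localization (Lemma \ref{lem:profile-local}). The empirical derivative at $r^\circ$ is an average of i.i.d. terms $g_i(s(r^\circ g_i)-y_i)$ with variance $\asymp \mathbb{E}[g^2 s'(Rg)]\asymp R^{-3}$, so Bernstein bounds it by $\sqrt{t/(nR^3)}+t/n$. On an interval $|r-r^\circ|\le R/4$, the empirical curvature is at least $cR^{-3}$ with probability $1-e^{-ct}$, provided $n\gtrsim Rt$; this holds because the curvature is an average of $g^2s'(rg)$ with effective sample size $\sim n/R$. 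Convexity then gives $|\hat r-r^\circ|\lesssim R^3\sqrt{t/(nR^3)}=\sqrt{R^3t/n}$, and hence $|q(\hat r)-\alpha q(R)|\lesssim\sqrt{t/(nR^3)}$.

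\textbf{Step 3.} This is the main obstacle: concentration of the $U$-statistic $\hat B$ at scale $\sqrt{t/(nR^3)}$ after the square root is taken. We condition on $(\breve\bv,\hat r)$. Lemma \ref{lem:EwEB} gives $\mathbb{E}\hat B=(1-\alpha^2)q(R)^2=:\beta^2$. Write $\bw_i=\bmu+\bxi_i$ with $\bmu=q(R)\bP^\perp_{\breve\bv}\bv^*$. Then
\[\hat B-\beta^2=\frac{2}{m_3}\sum_i\langle\bmu,\bxi_i\rangle+\frac{1}{m_3(m_3-1)}\sum_{i\ne j}\langle\bxi_i,\bxi_j\rangle.\]
The noise $\bxi_i$ is roughly $[y_i-s(\hat r\bx_i^\top\breve\bv)]$ times an isotropic Gaussian in $\breve\bv^\perp$. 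Its conditional second moment is $\sigma^2:=\mathbb{E}(y-s(\hat r g))^2\asymp 1/R$, since the Bernoulli variance is $\mathbb{E}s'(Rg)\asymp 1/R$ and the mismatch $|s(Rg)-s(\hat rg)|$ is small. So $\mathrm{Cov}(\bxi_i)\preceq C R^{-1}\bI$. The linear term is then sub-exponential with scale $\|\bmu\|_2\sqrt{t/(nR)}\lesssim\beta\sqrt{t/(nR)}$. The degenerate term is handled by a decoupling/Hanson--Wright-type bound for Gaussian chaos with bounded multipliers, giving scale $\sqrt{d}\,t/(nR)$.

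It remains to convert to $\hat q$. We have $\hat S=q(\hat r)^2+\hat B$ and $q(R)^2=\alpha^2q(R)^2+\beta^2$, and $q(\hat r)^2-\alpha^2q(R)^2=O(\sqrt{t/(nR^3)})$ by Step 2. Moreover $\beta\lesssim\sqrt{(d+t)/(nR)}$, so the linear term is $\lesssim\sqrt{d+t}\,\sqrt t/(nR)$ and the chaos term is $\lesssim \sqrt{d}\,t/(nR)$. Both are at most $\sqrt{t/(nR^3)}$ exactly when $n\gtrsim R(d+t)$ (up to adjusting $t$); this is where the sample-size condition is used sharply. Hence $|\hat S-q(R)^2|\lesssim\sqrt{t/(nR^3)}$. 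Since $q(R)\asymp1$, taking the square root preserves this rate. Projection onto $[q(\hat r/2),q(2\hat r)]$, which contains $q(R)$, only decreases the error. Finally, inverting $q$ with $q'\asymp R^{-3}$ on that interval gives $|\hat R_{\rm DB}-R|\lesssim R^3\sqrt{t/(nR^3)}=\sqrt{R^3t/n}$, with probability $1-Ce^{-ct}$ by a union bound.
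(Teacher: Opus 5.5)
Your plan follows the paper's architecture: Lemma~\ref{lem:chardonbound} on $\calI_1$, convex localization of $\hat r$ around $r^\circ$, a linear-plus-degenerate split of $\hat B$ with Hanson--Wright, then inversion of $q$. Your Step~2 variant, the score at $r^\circ$ plus an empirical curvature lower bound, is a valid alternative to the paper's two-point sign check of $\ell'(r^\circ\pm h)$. The curvature bound is automatically uniform because $r\mapsto s'(r|g|)$ is decreasing.

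The gap is in the rates, exactly where you say $n\gtrsim R(d+t)$ is ``used sharply.'' Your chaos rate $\sqrt d\,t/(nR)$ is too weak. The inequality $\sqrt d\,t/(nR)\le\sqrt{t/(nR^3)}$ is equivalent to $n\gtrsim Rdt$, which the hypothesis does not give. ``Adjusting $t$'' cannot repair this, because $t$ also fixes the failure probability $Ce^{-ct}$. What you need is the mixed Frobenius/operator tail of Hanson--Wright:
\begin{itemize}
\item Condition on the residuals $e_i=y_i-s(\hat r\bx_i^\top\breve\bv)$. The chaos is then $\bz^\top(\bI_{d-2}\otimes\bA_e)\bz$ with $\bA_e=(e_ie_j\mathbf{1}\{i\ne j\})_{i,j}$.
\item A separate Bernstein bound $\sum_i e_i^2\lesssim m/R$ is needed; here $\mathbb{E}e_i^2\lesssim 1/R$ is essential, and $|e_i|\le1$ alone is useless. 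It gives $\|\bA_e\|_{\rm op}\le\|\bA_e\|_{\rm F}\lesssim m/R$.
\item Hanson--Wright then yields $(\sqrt{dt}+t)/(mR)$. Each of the two pieces lies below $\sqrt{t/(nR^3)}$ precisely when $n\gtrsim Rd$ and $n\gtrsim Rt$.
\end{itemize}
The same slip appears in Step~2. A Bernstein scale term $t/n$ leaves $R^3t/n$ in $|\hat r-r^\circ|$, and this exceeds $\sqrt{R^3t/n}$ whenever $n\ll R^3t$. You need the scale $t/(nR)$, which follows from the moment bound $\mathbb{E}|Z\varepsilon_r-\mathbb{E}Z\varepsilon_r|^p\le C^pp!/R^{p+1}$ of Lemma~\ref{lem:1dprobound}. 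That gives the harmless term $R^2t/n$.

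There is a second omission: $\bw_i-\mathbb{E}\bw_i$ is not ``$e_i$ times an isotropic Gaussian in $\breve\bv^\perp$.'' Take $\bu\propto\bP^\perp_{\breve\bv}\bv^*$. The coordinate $T_i=\bx_i^\top\bu$ enters $y_i\mid\bx_i\sim\Bern\big(s(R[\alpha\bx_i^\top\breve\bv+\sqrt{1-\alpha^2}\,T_i])\big)$, and hence enters $e_i$. So conditioning on the multipliers does not leave that coordinate Gaussian, and the chaos bound cannot be applied to it. The fix is as follows:
\begin{itemize}
\item Let $\bP_\perp$ span the complement of $\{\breve\bv,\bu\}$ and set $\tilde\bg_i=\bP_\perp^\top\bx_i$.
\item Write $\bw_i-\mathbb{E}\bw_i=\eta_i\bu+e_i\bP_\perp\tilde\bg_i$ with $\eta_i=e_iT_i-\mathbb{E}[e_iT_i]$. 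The cross terms vanish by orthogonality.
\item Apply Hanson--Wright only to the $(d-2)$-dimensional part.
\item Bound the scalar degenerate term $\frac{1}{m(m-1)}\sum_{i\ne j}\eta_i\eta_j=\frac{(\sum_i\eta_i)^2-\sum_i\eta_i^2}{m(m-1)}$ by two Bernstein applications, which gives $t/(mR)$.
\end{itemize}
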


Here, we only provide a sketch of the proof, while deferring details to Section \ref{sec:proofminimaxupper}.

\begin{proof}[Proof Sketch for Theorem \ref{thm:minimaxupper}]
Conditioning on $\breve{\bv},\breve R$ whose behavior  is characterized by Lemma \ref{lem:chardonbound} from \cite{chardon2024finite}, Lemma
\ref{lem:profile-local} locates $\hat{r}$ found by (\ref{1dmle}) and shows, via a convex localization argument, that the one-dimensional MLE in Step 2
satisfies
\(
|\hat r-r^\circ|
\lesssim
\sqrt{\frac{R^3t}{n}}.
\)  By some algebra, this gives
\(
|q(\hat r)^2-\alpha^2q(R)^2|
\lesssim
\sqrt{\frac{t}{nR^3}}.
\) Hence $q(\hat{r})$ gives a surrogate of $\alpha^2q(R)^2$.
Toward an estimate of  $q(R)^2$, we 
let $\beta^2:=1-\alpha^2$ and construct $\hat{B}$ in Step 3 as a surrogate of $\beta^2q(R)^2$. While its role has been justified in
Lemma \ref{lem:EwEB}, which establishes 
$\mathbb{E}\hat B=\beta^2q(R)^2$, we still need to control the concentration error $|\hat{B}-\mathbb{E}\hat B|$. This turns out to be the most technical part of the analysis, with the main components being multiple applications of Bernstein's inequality that are built upon a number of non-trivial Gaussian moment bounds (collected in Appendix \ref{app:integral}), as well as a Hanson-Wright inequality (cf. Lemma \ref{lem:hanson-wright}) for controlling a Gaussian quadratic form that arises from some technical algebraic manipulation. In particular, 
Lemma
\ref{lem:direct-U} establishes
\(
|\hat B-\beta^2q(R)^2|
\lesssim
\frac{\sqrt{dt}+t}{nR},
\)  which implies that $\hat S=q(\hat r)^2+\hat B$ accurately estimates $q(R)^2$ up to   overall error
\(
|\hat S-q(R)^2|
\lesssim
\sqrt{\frac{t}{nR^3}}
+
\frac{\sqrt{dt}+t}{nR}\lesssim\sqrt{\frac{t}{nR^3}}.
\) Finally, some elementary arguments, based on the properties of the function $q(r)$, yield the desired bound on $|\hat{R}_{\rm DB}-R|$.
\end{proof}
 \begin{rem}[Minimax optimality in expectation]\label{rem:Enormerror}
 Theorem \ref{thm:minimaxupper}   implies that $|\hat{R}_{\rm DB}-R|=O(\sqrt{R^3\log n/n})$ holds with probability at least $1-Cn^{-1}$, yet this bound exhibits an extra factor of $\sqrt{\log n}$ compared to the minimax lower bound in Theorem \ref{thm:minmaxlower}. Here, we point out that   a slightly modified version of $\hat{R}_{\rm DB}$ attains the minimax lower bound in expectation. 
Fix $R_0\ge1$, consider $R\in[R_0,2R_0]$ and define  
\(
    \hat R_{\rm DB}^{\rm tr}
    :=
    \Pi_{[R_0,2R_0]}(\hat R_{\rm DB}),\)
where an arbitrary value in $[R_0,2R_0]$ is assigned whenever Algorithm \ref{alg:debiased-norm-estimate} is not well-defined (e.g., when the MLE in (\ref{smallmle}) does not exist). Integrating the tail bound in Theorem \ref{thm:minimaxupper} gives 
\[
    \sup_{R\in[R_0,2R_0]}
    \mathbb{E}_{R\bv^*}
    \big|\hat R_{\rm DB}^{\rm tr}-R\big|
    \lesssim
    \sqrt{R_0^3/n}.
\]
Under $n\gtrsim R_0d$, this matches the lower bound in Theorem \ref{thm:minmaxlower} up to a universal constant.
 \end{rem}

\section{Full Parameter Estimation}\label{sec:fullestimate}
In this section, we present our results on the estimation of the full parameter $\btheta^*$ and put them into perspective. 

 \begin{theorem}[Minimax lower bound] \label{thm:fullminimax}
 Let $R_0\ge 1$, and let $\Theta_{R_0}:=\{\bu\in \mathbb{R}^d:R_0\le \|\bu\|_2 \le 2R_0\}$. If $n\ge C(R_0+\frac{d}{R_0})$, then any estimator $\tilde{\btheta}=\tilde{\btheta}((\bx_i,y_i)_{i=1}^n)$ for the true parameter $\btheta^*$ satisfies
    \begin{align}\label{jointlower}
        \sup_{\btheta^*\in \Theta_{R_0}}\mathbb{E}_{\btheta^*}\|\tilde{\btheta}-\btheta^*\|_2\ge c\bigg(\sqrt{\frac{R_0d}{n}}+\sqrt{\frac{R_0^3}{n}}\bigg).
    \end{align}
 \end{theorem}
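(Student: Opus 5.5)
Since the right-hand side of \eqref{jointlower} is a sum of two terms, it suffices to prove the two lower bounds $c\sqrt{R_0^3/n}$ and $c\sqrt{R_0d/n}$ separately: the supremum is at least the maximum of the two, and the maximum is at least half the sum.

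\textbf{The term $\sqrt{R_0^3/n}$.} This follows from Theorem \ref{thm:minmaxlower}. Fix any $\bv^*\in\mathbb{S}^{d-1}$ and restrict the supremum to the segment $\{R\bv^*:R\in[R_0,2R_0]\}\subset\Theta_{R_0}$. Given any estimator $\tilde{\btheta}$, define the norm estimator $\tilde R:=\|\tilde{\btheta}\|_2$, which uses the data only. By the triangle inequality, $\big|\|\tilde{\btheta}\|_2-R\big|\le\|\tilde{\btheta}-R\bv^*\|_2$. Hence
\[
\sup_{\Theta_{R_0}}\mathbb{E}\|\tilde{\btheta}-\btheta^*\|_2\ge \sup_{R\in[R_0,2R_0]}\mathbb{E}_{R\bv^*}|\tilde R-R|\ge c\sqrt{R_0^3/n}.
\]
The condition $n\ge CR_0$ required by Theorem \ref{thm:minmaxlower} is implied by the hypothesis $n\ge C(R_0+d/R_0)$.

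\textbf{The term $\sqrt{R_0d/n}$.} This comes from direction estimation, and I would reuse the minimax bound of \cite[Theorem 1]{hsu2024sample} at fixed norm $R=R_0$ (or $R=\tfrac32R_0$): $\inf_{\tilde\bv}\sup_{\bv^*}\mathbb{E}\|\tilde\bv-\bv^*\|_2\gtrsim\sqrt{d/(nR_0)}$, valid in our regime. To transfer this to $\btheta$, set $\tilde\bv:=\tilde{\btheta}/\|\tilde{\btheta}\|_2$, with an arbitrary unit vector if $\tilde{\btheta}=0$. For $\btheta^*=R_0\bv^*$, the elementary inequality $\|\bu/\|\bu\|_2-\bv\|_2\le 2\|\bu-R_0\bv\|_2/R_0$ for unit $\bv$ gives $\|\tilde\bv-\bv^*\|_2\le 2\|\tilde{\btheta}-\btheta^*\|_2/R_0$. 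Therefore
\[
\sup\mathbb{E}\|\tilde{\btheta}-\btheta^*\|_2\ge \tfrac{R_0}{2}\,c\sqrt{\tfrac{d}{nR_0}}=c'\sqrt{\tfrac{R_0d}{n}}.
\]

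\textbf{Fallback if the cited result does not transfer.} The main thing to check is that the hypotheses of \cite{hsu2024sample} match ours: fixed $R\ge1$, sample size $n\gtrsim d/R$ so that the bound is below a constant, and possibly a restriction $d\ge2$ or $d$ larger than a constant. If they do not, I would prove the bound directly by Fano's method.
\begin{itemize}
\item Take a $1/2$-separated packing $\{\bu_j\}$ of size $e^{c d}$ of the unit sphere in $\bv_0^\perp$ (for $d$ at least a constant) and set $\btheta_j=R_0(\sqrt{1-\epsilon^2}\,\bv_0+\epsilon\bu_j)$. These points lie in $\Theta_{R_0}$ and are pairwise separated by $\gtrsim R_0\epsilon$.
\item Bound the per-sample KL divergence by $\tfrac12\mathbb{E}[s'(\bx^\top\btheta_j)(\bx^\top(\btheta_j-\btheta_k))^2]$. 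The difference $\btheta_j-\btheta_k$ is essentially orthogonal to $\bv_0$ (the $\bv_0$-components coincide), and $s'$ localizes $|\bx^\top\btheta_j|\lesssim1$, which costs a factor $1/R_0$. This gives KL $\lesssim nR_0^2\epsilon^2/R_0=nR_0\epsilon^2$.
\item Choose $\epsilon\asymp\sqrt{d/(nR_0)}$, which is at most a constant by the hypothesis $n\ge Cd/R_0$. Then the KL is a small multiple of $d\asymp\log M$, and Fano's inequality gives the bound $R_0\epsilon\asymp\sqrt{R_0d/n}$.
\end{itemize}
The orthogonal-direction KL bound is the Gaussian moment computation in the style of \eqref{T4bound1}: conditioning on $\bx^\top\btheta_j$ and using independence of the orthogonal component.

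Combining the two bounds with $\max\ge$ half the sum gives \eqref{jointlower}.
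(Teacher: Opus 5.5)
Your proposal is correct and follows the paper's proof essentially step for step. You restrict to $R_0\mathbb{S}^{d-1}$ and transfer the direction lower bound of \cite[Theorem 1]{hsu2024sample} through $\|\tilde{\bv}-\bv^*\|_2\le 2\|\tilde{\btheta}-\btheta^*\|_2/R_0$, which is the paper's lemma on lower-bounding the $\ell_2$ error by the direction error. You then restrict to a ray $\{R\bv^*:R\in[R_0,2R_0]\}$ and apply Theorem \ref{thm:minmaxlower} to $\|\tilde{\btheta}\|_2$ via the reverse triangle inequality, and finish with $\max\ge$ half the sum. The Fano fallback is not needed, since the paper also just invokes the cited result under $n\gtrsim d/R_0$.
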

 \begin{proof}
     Fix any estimator
$\tilde{\btheta}=\tilde{\btheta}((\bx_i,y_i)_{i=1}^n)$ and let
\(
    \tilde{\bv}
    :=
     \tilde{\btheta}/\|\tilde{\btheta}\|_2,\) under the convention $\bm{0}/0=\be_1$.
  We first restrict the parameter space to
$R_0\mathbb{S}^{d-1}\subseteq\Theta_{R_0}$, yielding
\begin{align}
    \sup_{\btheta^*\in\Theta_{R_0}}
    \mathbb{E}_{\btheta^*}
    \|\tilde{\btheta}-\btheta^*\|_2
    &\ge \sup_{\bv^*\in \mathbb{S}^{d-1}} \mathbb{E}_{R\bv^*}
    \|\tilde{\btheta}-R_0\bv^*\|_2
    \nn  
    \\
    &\stackrel{(a)}{\ge} 
    \frac{R_0}{2}
    \sup_{\bv^*\in\mathbb{S}^{d-1}}
    \mathbb{E}_{R_0\bv^*}
    \|\tilde{\bv}-\bv^*\|_2 \stackrel{(b)}{\ge}
    c\sqrt{R_0d/n}.
    \label{full-direction-lower}
\end{align}
where in $(a)$ we use
\ref{lem:lowerl2direction}, and $(b)$ is due to the direction estimation lower bound
\cite[Theorem 1]{hsu2024sample} that holds under $n\gtrsim d/R_0$. 


We next restrict the parameter space to
$\{R\be_1:R\in[R_0,2R_0]\}\subseteq\Theta_{R_0}$, where $\be_1$ denotes the first canonical basis vector. Similarly, by the reverse
triangle inequality and Theorem \ref{thm:minmaxlower},
\begin{align}
    \sup_{\btheta^*\in\Theta_{R_0}}
    \mathbb{E}_{\btheta^*}
    \|\tilde{\btheta}-\btheta^*\|_2
    \ge  \sup_{R\in[R_0,2R_0]}\mathbb{E}_{R\be_1}
    \|\tilde{\btheta}-R\be_1\|_2 \ge
    \sup_{R\in[R_0,2R_0]}
    \mathbb{E}_{R\be_1}
    \big|\|\tilde{\btheta}\|_2-R\big|\ge
    c\sqrt{R_0^3/n}.
    \label{full-norm-lower}
\end{align}
Combining \eqref{full-direction-lower}--\eqref{full-norm-lower} and
using $\max\{a,b\}\ge(a+b)/2$ proves \eqref{jointlower}. 
 \end{proof}
\begin{rem}[Interpretations of $\sqrt{R_0d/n}$ and $\sqrt{R_0^3/n}$]
\label{rem:full-minimax-interpretation}
The two terms in Theorem \ref{thm:fullminimax} arise from distinct
statistical difficulties. The lower bound
$\sqrt{R_0d/n}$ holds even when the parameter norm is fixed and known,
and therefore captures the difficulty of direction estimation. In
contrast, as already mentioned, the lower bound $\sqrt{R_0^3/n}$   captures the intrinsic difficulty of norm estimation.
\end{rem}

 \begin{theorem}[Improved error rate for the MLE]\label{thm:fullmle}
 Under Assumption \ref{assump1}, there exist universal constants $C>c>0$ such that, if $n\ge CRd$, then with probability at least $1-Ce^{-cd}-Cn^{-1}$, the MLE in (\ref{mlelogi}) exists and satisfies
 \[\|\hat{\btheta}_{\mle}-\btheta^*\|_2 \le C\bigg(\sqrt{\frac{Rd}{n}}+ \sqrt{\frac{R^3\log(nR)}{n}}+\frac{R^2d\log(nR)}{n}\bigg).\]
 \end{theorem}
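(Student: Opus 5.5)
The plan is to split the error of the MLE into a radial and an angular part, and to bound each with results already in hand: Theorem \ref{thm:upper} for the norm and the direction bound in (\ref{chardonrate}) from \cite{chardon2024finite}. Writing $\hat{\btheta}_{\mle}=\hat R_{\mle}\hat{\bv}_{\mle}$ and $\btheta^*=R\bv^*$, the triangle inequality gives
\begin{align*}
\|\hat{\btheta}_{\mle}-\btheta^*\|_2
\le \|\hat R_{\mle}\hat{\bv}_{\mle}-R\hat{\bv}_{\mle}\|_2+\|R\hat{\bv}_{\mle}-R\bv^*\|_2
= |\hat R_{\mle}-R|+R\|\hat{\bv}_{\mle}-\bv^*\|_2 .
\end{align*}
This decomposition is exact up to the triangle inequality, and it isolates the two quantities we already control.

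The first step is to work on a single high-probability event. Under $n\ge CRd$, Theorem \ref{thm:upper} holds with probability at least $1-Ce^{-cd}-Cn^{-1}$. On that event the MLE exists and
\[
|\hat R_{\mle}-R|\le C\Big(\sqrt{R^3\log(nR)/n}+R^2d\log(nR)/n\Big).
\]

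The second step is to intersect this with the event from \cite[Theorem 1]{chardon2024finite} on which $\|\hat{\bv}_{\mle}-\bv^*\|_2\lesssim\sqrt{d/(nR)}$, as recorded in (\ref{chardonrate}). That result also requires only $n\gtrsim Rd$. Its failure probability should be of order $e^{-cd}$ or smaller, presumably through the same Lemma \ref{lem:chardonbound} already invoked inside the proof of Theorem \ref{thm:upper}. On this event,
\[
R\|\hat{\bv}_{\mle}-\bv^*\|_2\lesssim R\sqrt{d/(nR)}=\sqrt{Rd/n}.
\]

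Finally, a union bound over the two events, followed by adjusting the universal constants $C,c$, gives the stated probability $1-Ce^{-cd}-Cn^{-1}$. Adding the two bounds then yields the claimed inequality.

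The only point that needs care is to confirm that the direction guarantee from \cite{chardon2024finite} holds with failure probability compatible with $Ce^{-cd}$. That guarantee is stated for the event on which the MLE exists, which is the same event used in Theorem \ref{thm:upper}. If its probability is stated differently, for instance at a confidence level $e^{-t}$, I would choose $t\asymp d$; this stays within $n\gtrsim Rd$. Everything else is routine.
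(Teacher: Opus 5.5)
Your proposal is correct and follows essentially the same route as the paper. The paper combines Lemma~\ref{lem:chardonbound} with $t=d$ (failure probability $e^{-d}$ under $n\gtrsim Rd$) for the direction and Theorem~\ref{thm:upper} for the norm, then applies the triangle-inequality split of Lemma~\ref{lem:dirplusnorm}; your bound $|\hat R_{\mle}-R|+R\|\hat{\bv}_{\mle}-\bv^*\|_2$ is the relevant special case of that lemma, and the direction event you worried about is already the event (\ref{chardonboundassume}) on which the proof of Theorem~\ref{thm:upper} operates.
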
 
 \begin{proof}
     By the result of \cite{chardon2024finite} (as restated in Lemma \ref{lem:chardonbound}), the MLE exists and 
     \(\|\hat{\bv}_{\mle}-\bv^*\|_2\lesssim \sqrt{d/nR} \)
     holds with probability at least $1-e^{-d}$. By Theorem \ref{thm:upper}, we also have $|\hat{R}_{\mle}-R|\lesssim \sqrt{R^3\log(nR)/n}+R^2d\log(nR)/n$ with the promised probability. Now applying Lemma \ref{lem:dirplusnorm}   completes the proof.  
 \end{proof}
\begin{rem}[Optimality of the MLE] Our improved error rate for the MLE  in Theorem \ref{thm:fullmle} reduces to $\tilde{O}(\sqrt{Rd/n}+\sqrt{R^3/n})$ whenever $n\ge Rd\min\{d,R^2\}$, matching the minimax lower bound in Theorem \ref{thm:fullminimax}, up to logarithmic factors. Therefore, we conclude that the MLE is near minimax optimal under $n\ge Rd\min\{d,R^2\}.$ 
\end{rem}

We now combine $\hat{R}_{\rm DB}$ with the optimal direction estimator $\hat{\bv}_{\mle}$ to develop a minimax optimal estimator for full parameter estimation.  

 \begin{algorithm}[ht!]
\caption{A Minimax Optimal Estimator}
\label{alg:minimaxestimator}
\begin{algorithmic}[1]

\Require  
\(\{(\bx_i,y_i)\}_{i=1}^{n}
\subset \mathbb{R}^{d}\times\{0,1\}\) and the positive integers $(m_1,m_2,m_3)$ such that  $m_1+m_2+m_3=n$. Let $\calI_1:=[m_1]$, $\calI_2=[m_1+m_2]\setminus[m_1]$, $\calI_3= [n]\setminus [m_1+m_2]$

\State \textbf{Direction estimation:} Compute (\ref{mlelogi})   and set $\hat{\bv} = \hat{\btheta}_{\mle}/\|\hat{\btheta}_{\mle}\|_2$

\State \textbf{Norm estimation:} Run Algorithm \ref{alg:debiased-norm-estimate} to obtain $\hat{R}_{\rm DB}$

\Ensure \(\hat{\btheta}_{\rm DB}:=\hat{R}_{\mathrm{DB}}\hat{\bv}\).

\end{algorithmic}
\end{algorithm}

 \begin{theorem}[Minimax optimal estimator]\label{thm:fullminimaxupper} 
      Under Assumption \ref{assump1}, there exist universal constants $C>c>0$ such that the following holds. Given any $t\ge 1$, if $\min\{m_1,m_2,m_3\}\ge cn$, $n\ge CR(d+t)$, then with probability at least $1-C\exp(-ct)$, the MLEs in (\ref{mlelogi}) and (\ref{smallmle}) exist and the output of Algorithm \ref{alg:minimaxestimator} satisfies
      \[\|\hat{\btheta}_{\rm DB} -\btheta^*\|_2 \le C\bigg(\sqrt{\frac{R(d+t)}{n}}+\sqrt{\frac{R^3t}{n}}\bigg).\]      
 \end{theorem}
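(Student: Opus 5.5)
The plan is to combine two high-probability events by a union bound and then use a direction-plus-norm decomposition. The first event is the direction guarantee for the full-sample MLE. The second is the norm guarantee for $\hat R_{\rm DB}$ from Theorem \ref{thm:minimaxupper}.

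\textbf{Direction event.} By the result of \cite{chardon2024finite} (restated as Lemma \ref{lem:chardonbound}), under $n\ge CRd$ the MLE in (\ref{mlelogi}) exists. We will need its direction bound in a form with a tail parameter $t$:
\[
\|\hat{\bv}-\bv^*\|_2\lesssim \sqrt{(d+t)/(nR)} \quad\text{with probability at least } 1-Ce^{-ct}.
\]
The paper only quotes the bound at confidence $1-e^{-d}$, so this is the step needing care. I will re-read Lemma \ref{lem:chardonbound} to check that its deviation parameter can be taken as $d+t$ instead of $d$; the Chardon--Lerasle--Mourtada bound is stated with a general confidence level, giving $\sqrt{(d+\log(1/\delta))/(nR)}$. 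The condition $n\ge CR(d+t)$ then ensures this level is admissible.

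\textbf{Norm event.} Theorem \ref{thm:minimaxupper} applies directly under $\min\{m_1,m_2,m_3\}\ge cn$ and $n\ge CR(d+t)$. With probability at least $1-Ce^{-ct}$, the subsample MLE (\ref{smallmle}) exists, since $m_1\ge cn\gtrsim Rd$, and
\[
|\hat R_{\rm DB}-R|\lesssim\sqrt{R^3t/n}.
\]
A union bound puts both events together with probability at least $1-2Ce^{-ct}$, and the constants are then renamed.

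\textbf{Combining.} On the intersection, I apply Lemma \ref{lem:dirplusnorm}, the elementary bound
\[
\|\hat R\hat{\bv}-R\bv^*\|_2\le \hat R\|\hat{\bv}-\bv^*\|_2+|\hat R-R|\le R\|\hat{\bv}-\bv^*\|_2+2|\hat R-R|.
\]
Here $\hat R_{\rm DB}\le 2R$, because $\sqrt{R^3t/n}\le R$ once $n\ge CRt$. Substituting the two bounds gives
\[
\|\hat{\btheta}_{\rm DB}-\btheta^*\|_2\lesssim R\sqrt{(d+t)/(nR)}+\sqrt{R^3t/n}=\sqrt{R(d+t)/n}+\sqrt{R^3t/n}.
\]
The only genuine obstacle is the $t$-dependent version of the direction bound for the full-sample MLE. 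If Lemma \ref{lem:chardonbound} only gives confidence $1-e^{-d}$, I would handle it in one of two ways. The first is to treat $t\le d$, where $e^{-d}\le e^{-t}$ and $\sqrt{d/(nR)}$ suffices. The second, for $t>d$, is to invoke the general-confidence form of \cite[Theorem 1]{chardon2024finite}, which is valid under $n\gtrsim R(d+t)$.
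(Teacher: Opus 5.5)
Your proposal is correct and matches the paper's argument: a union bound over the full-sample direction event from Lemma \ref{lem:chardonbound} and the norm event from Theorem \ref{thm:minimaxupper}, followed by Lemma \ref{lem:dirplusnorm}. The step you flag as the obstacle is not one. Lemma \ref{lem:chardonbound} is already stated for every $t\ge 1$, giving probability $1-e^{-t}$ and the bound $C\sqrt{(d+t)/(nR)}$ under $n\ge CR(d+t)$, so no case split is needed. Also, the $\min\{\cdot\}$ in Lemma \ref{lem:dirplusnorm} gives $R\|\hat{\bv}-\bv^*\|_2+|\hat R_{\rm DB}-R|$ directly, so you do not need the detour through $\hat R_{\rm DB}\le 2R$.
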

 \begin{proof}
     We omit the proof since it is parallel to that of Theorem \ref{thm:fullmle} except that we invoke Theorem \ref{thm:minimaxupper} (instead of Theorem \ref{thm:upper}) for the norm estimation.
 \end{proof}   
 \begin{rem}[Minimax optimal rate for parameter estimation]\label{rem:Efullerror}
Integrating the tail bound in Lemma \ref{lem:chardonbound}, with an arbitrary unit vector assigned whenever the MLE does not exist, gives
\(
    \mathbb{E}_{R\bv^*}
    \|\hat{\bv}_{\mle}-\bv^*\|_2
    \lesssim
    \sqrt{\frac{d}{nR}}.
\)
Combining this with Remark \ref{rem:Enormerror}, the   estimator
\(
    \hat{\btheta}_{\rm DB}^{\rm tr}
    :=
    \hat R_{\rm DB}^{\rm tr}\hat{\bv}_{\mle}\)
satisfies
\[
    \sup_{R_0\le\|\btheta^*\|_2\le 2R_0}
    \mathbb{E}_{\btheta^*}
    \|\hat{\btheta}_{\rm DB}^{\rm tr}-\btheta^*\|_2
    \lesssim
    \sqrt{R_0d/n}
    +
    \sqrt{R_0^3/n}
\]
under $n\gtrsim R_0d$. Together with Theorem \ref{thm:fullminimax}, this shows that the minimax optimal rate for parameter estimation over $\Theta_{R_0}:=\{\bu\in\mathbb{R}^d:R_0\le \|\bu\|_2\le 2R_0\}$ is at the order $\Theta(\sqrt{R_0d/n}+\sqrt{R_0^3/n})$.
 \end{rem}

\section{Numerical Simulations}\label{sec:numerics}
In this section we provide some numerical examples to demonstrate the bias of the MLE norm estimate  (Proposition \ref{prop:asymptotic-radial-bias}) and to show that the minimax optimal algorithms (Algorithms \ref{alg:debiased-norm-estimate}, \ref{alg:minimaxestimator}) outperform MLE in certain regimes. In these experiments, we set $R=\lfloor c_1n^{\rho_1}\rfloor$ and $d=\lfloor c_2n^{\rho_2}\rfloor$ and vary $n$ over a certain range. Each data point is averaged over $100 $ independent trials. All experiments were implemented using Matlab R2022a on
a laptop with an Intel CPU up to 2.5 GHz and 32 GB RAM. The Matlab codes are available in \href{https://github.com/junrenchen58/logistic-regression-mle-and-minimax}{\texttt{https://github.com/junrenchen58/logistic-regression-mle-and-minimax}}.

\subsection{Bias of the MLE Norm Estimate} 
Under fixed $R$ and as $\frac{d}{n}\to \delta$,    Proposition \ref{prop:asymptotic-radial-bias} predicts that 
$
    \hat{R}-R\sim \frac{d}{2Rq'(R)n}$ if $\delta$ is close enough to $0$. 
Also, for arbitrary $(n,d,R)$ satisfying $n\gtrsim Rd$, our Theorems \ref{thm:upper}, \ref{thm:minmaxlower} show that $|\hat{R}_{\mle}-R|=\tilde{O}(\sqrt{R^3/n}+ R^2d/n)$, and that the term $\sqrt{R^3/n}$ cannot be avoided by any algorithms. The main aim of our first experiment is to demonstrate that $
    \hat{R}_{\mle}-R\sim \frac{d}{2Rq'(R)n}$---albeit being established under high-dimensional asymptotics---remains accurate in finite-sample regimes. And as such, the term $R^2d/n$ in our upper bound for MLE appears to be the intrinsic bias rather than a proof artifact.

    Specifically, varying $n$ over $\{4000,8000,16000,32000,64000\}$, we test a fixed-$R$ setting with $R=5,~d= \lfloor 0.5n^{0.7}\rfloor$, where $R^2d/n\asymp n^{-0.3}$ dominates $\sqrt{R^3/n}\asymp n^{-0.5}$, and a growing-$R$ setting with $R= n^{0.15},~d=\lfloor 0.5n^{0.7}\rfloor$, where $R^2d/n\asymp n^{-0.1}$ dominates $\sqrt{R^3/n}\asymp n^{-0.275}$. In the figures, we will use $\hat{R}:=\hat{R}_{\mle}$. The results shown in Figure \ref{fig:normbias} suggest that the (signed) normalized quantity, $\frac{\hat{R}_{\mle}-R}{d/[2Rq'(R)n]}$, well approximates $1$, while another theoretical reference, $\frac{\sqrt{R^3/n}}{d/[2Rq'(R)n]}$, decays to $0$ in both regimes. The results suggest that the norm estimate given by the MLE incurs some dimension-dependent bias.  

\begin{figure}[ht!]
\centering
\includegraphics[width=0.92\textwidth]{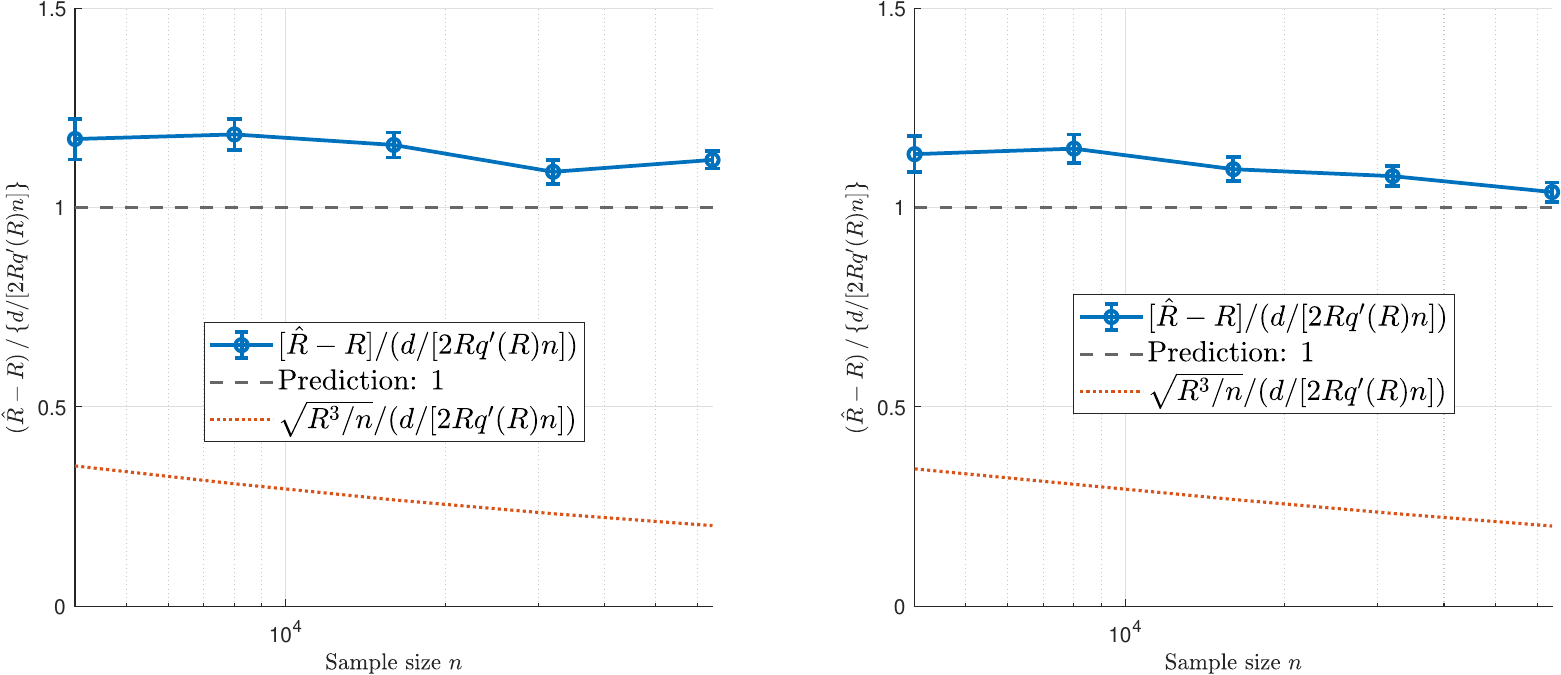}
\caption{The signed normalized quantity $\frac{\hat{R}_{\mle}-R}{d/[2Rq'(R)n]}$ in a fixed-$R$ setting (Left) and a growing-$R$ setting (Right). 
\label{fig:normbias}
}
\end{figure}

\subsection{Algorithms \ref{alg:debiased-norm-estimate}--\ref{alg:minimaxestimator} vs. MLE}
 Theoretically, Theorem \ref{thm:minimaxupper} shows that $\hat{R}_{\rm DB}$ computed from Algorithm \ref{alg:debiased-norm-estimate} is a minimax optimal norm estimator for logistic regression with Gaussian design. In contrast, our Theorem \ref{thm:upper} provides the best known error rate $\tilde{O}(\sqrt{R^3/n}+R^2d/n)$ for the MLE norm estimation. The main aim of our second experiment is to corroborate this by showing that, in some regime, the debiased estimator $\hat{R}_{\rm DB}$ noticeably outperforms the norm estimate given by MLE.

 Specifically, we set $R= \lfloor n^{0.005}\rfloor,~d=\lfloor 0.3n^{0.8}\rfloor$ and vary $n$
 over $\{6000,12000,24000,48000,96000\}$. In this setting, $\hat{R}_{\rm DB}$ achieves error rate $\sqrt{R^3/n}\asymp n^{-0.4925}$, while the performance bound for the norm of the MLE scales as $\sqrt{R^3/n}+ R^2d/n\asymp n^{-0.19}$. The results in Figure \ref{fig:mlevsminimax}(Left) are consistent with our theoretical findings---the proposed debiased norm estimator $\hat{R}_{\rm DB}$ achieves much lower errors than (the norm of) the MLE and exhibits sharper error decay rate with $n$. We also track the errors of Algorithm \ref{alg:minimaxestimator} and find that it also numerically improves on the MLE, as shown in Figure \ref{fig:mlevsminimax}(Right).

\begin{figure}[ht!]
\centering
\includegraphics[width=0.92\textwidth]{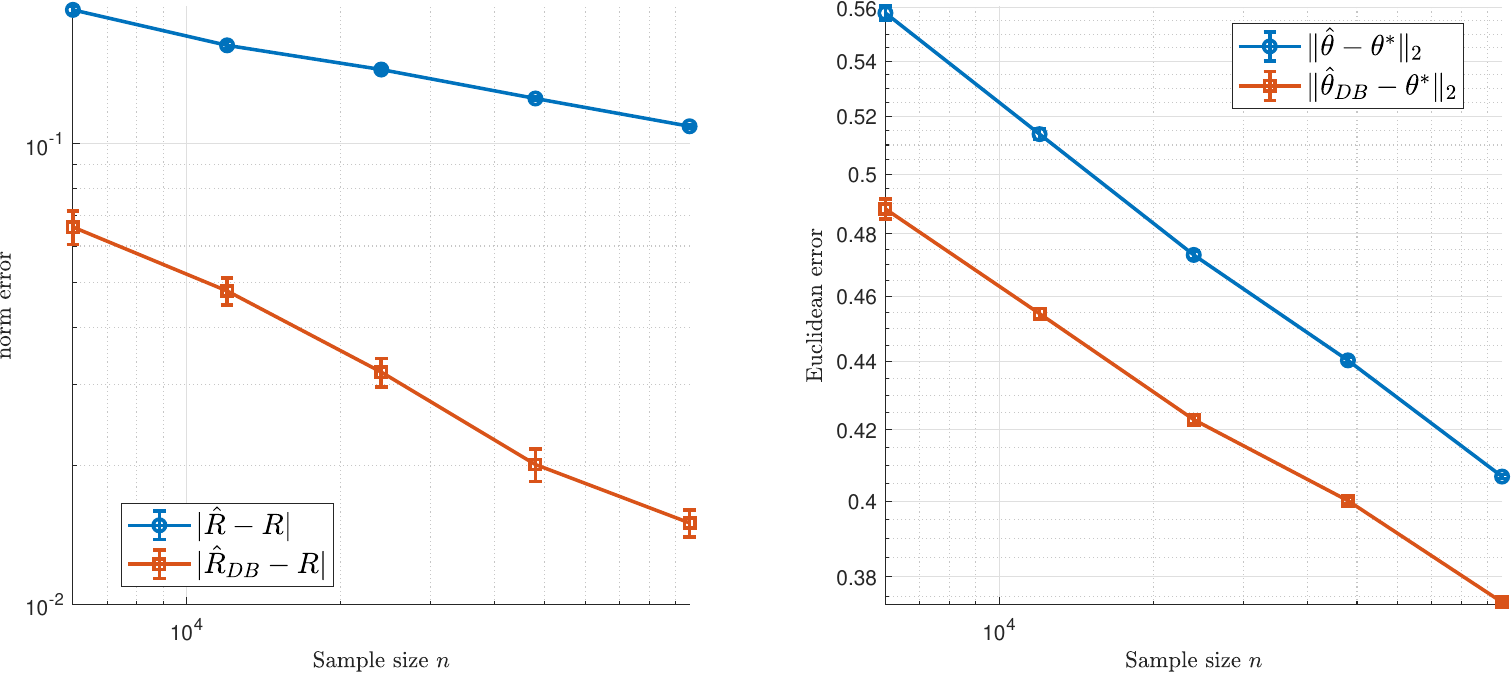}
\caption{Comparisons between Algorithms \ref{alg:debiased-norm-estimate}--\ref{alg:minimaxestimator} and MLE: $|\hat{R}_{\rm DB}-R|$ vs. $|\hat{R}-R|$ (Left) and $\|\hat{\btheta}_{\rm DB}-\btheta^*\|_2$ vs. $\|\hat{\btheta}_{\mle}-\btheta^*\|_2$ (Right). 
\label{fig:mlevsminimax}
}
\end{figure}

\section{Proof of Theorem \ref{thm:upper}}\label{sec:proofupper}

Recall that the empirical logistic loss $L_n(\btheta)$ is given in  (\ref{mlelogi}). We also let 
 $L(\btheta)=\mathbb{E} L_n(\btheta)$
be the population loss, and define the empirical gradient deviation as 
\begin{align}
    \bZ_n(\btheta):=\nabla L_n(\btheta)-\nabla L(\btheta)= \frac{1}{n}\sum_{i=1}^n (s(\bx_i^\top\btheta)-y_i)\bx_i - \mathbb{E}\big[(s(\bx_i^\top\btheta)-y_i)\bx_i\big].\label{zndef} 
\end{align}  
 Our upper bound builds on the following result from \cite{chardon2024finite}. 

\begin{lem}\cite[Theorem 1 \& Equation (19)]{chardon2024finite} \label{lem:chardonbound}Under Assumption \ref{assump1}, there exist universal constants $C>0$ such that, for any $t\ge 1$,  if $n\ge CR(d+t)$, then with probability at least $1-e^{-t}$, the MLE $\hat{\btheta}_{\mle}=\hat{R}_{\mle}\hat{\bv}_{\mle}$ defined in (\ref{mlelogi}) exists and satisfies
\(\|\hat{\bv}_{\mle}-\bv^*\|_2\le C\sqrt{\frac{d+t}{nR}}\) and \(|\hat{R}_{\mle}-R|\le C\sqrt{\frac{R^3(d+t)}{n}}\,.\)   
\end{lem}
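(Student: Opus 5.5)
This lemma is quoted from \cite{chardon2024finite}, so the plan is to check that their Theorem~1 and Equation~(19) give exactly the stated form. We do not build a new argument. Still, I will outline how the proof goes, so that the constants and the probability $1-e^{-t}$ can be traced.

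\textbf{Step 1: existence and a crude localization.} The first step is to show that, w.h.p., the empirical logistic loss $L_n$ is strongly convex on a neighborhood of $\btheta^*$. That neighborhood is adapted to the anisotropic Hessian $\nabla^2 L(\btheta^*)$, whose eigenvalue is $\asymp R^{-3}$ along $\bv^*$ and $\asymp R^{-1}$ on $\bv^{*\perp}$ (cf.\ $q'(R)\asymp R^{-3}$ from Lemma \ref{lem:qprime}). This uses a small-ball/lower-tail argument for the empirical Hessian. The reason it works is that a fraction $\asymp 1/R$ of samples satisfies $|\bx_i^\top\bv^*|\lesssim 1/R$, where $s'$ is of order one. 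Counting these samples requires $n\gtrsim R(d+t)$, uniformly over a $d$-dimensional set, at confidence $e^{-t}$. Strong convexity on the boundary of an ellipsoidal region then gives existence of the minimizer inside it, by convexity of $L_n$.

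\textbf{Step 2: bounding the score in the dual norm.} Next I would control $\bZ_n(\btheta^*)=\nabla L_n(\btheta^*)$ in the norm induced by $H^{-1}$, where $H=\nabla^2L(\btheta^*)$. Along $\bv^*$ the variance is $\asymp q'(R)\asymp R^{-3}$. In orthogonal directions it is $\mathbb{E}[s'(Rg)]\asymp R^{-1}$. Bernstein's inequality (Lemma \ref{lem:bernstein210}) together with a net over the sphere gives $\|H^{-1/2}\nabla L_n(\btheta^*)\|_2\lesssim\sqrt{(d+t)/n}$ with probability $1-e^{-t}$.

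\textbf{Step 3: the standard localization conclusion.} Combining Steps 1 and 2 gives
\[
\|H^{1/2}(\hat\btheta_{\mle}-\btheta^*)\|_2\lesssim\sqrt{(d+t)/n}.
\]
Reading this off component-wise, the error along $\bv^*$ is $\lesssim\sqrt{R^3(d+t)/n}$, which bounds $|\hat R_{\mle}-R|$. The orthogonal error is $\lesssim\sqrt{R(d+t)/n}$, and dividing by $\hat R_{\mle}\asymp R$ gives the direction bound $\sqrt{(d+t)/(nR)}$.

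\textbf{Main obstacle.} The delicate step is Step~1. The region of strong convexity must be large enough to contain the anisotropic error ellipsoid, yet the Hessian lower bound must hold uniformly over it under only $n\gtrsim R(d+t)$. This is the technical core of \cite{chardon2024finite}, and it is where I would defer to their argument rather than reprove it.
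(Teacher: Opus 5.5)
Your proposal is correct and matches the paper's treatment. The paper gives no independent proof of this lemma and simply imports Theorem~1 and Equation~(19) of \cite{chardon2024finite}, and your outline is a faithful sketch of that cited argument: a curvature lower bound driven by the $\asymp 1/R$ fraction of samples with $|\bx_i^\top\bv^*|\lesssim 1/R$, an $H^{-1}$-norm bound on the score, and a split of the resulting $H$-norm error into a radial part $\lesssim\sqrt{R^3(d+t)/n}$ and a transverse part $\lesssim\sqrt{R(d+t)/n}$, the latter divided by $\hat{R}_{\mle}\asymp R$; you rightly defer the uniform curvature step. Two details are worth making explicit: existence only follows after combining Step~1 with Step~2 (curvature on the whole ellipsoid plus the score bound give $L_n(\btheta)>L_n(\btheta^*)$ on its boundary), and Bernstein in Step~2 actually yields $\sqrt{(d+t)/n}+\sqrt{R}\,(d+t)/n$, whose second term is absorbed precisely because $n\ge CR(d+t)$.
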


From now on, we simply write $\hat{\bv}:=\hat{\bv}_{\mle}$ and $\hat{R}:=\hat{R}_{\mle}$. Under $n\gtrsim Rd$, Lemma \ref{lem:chardonbound} with $t=d$ yields that $\hat{\btheta}_{\mle}$ exists and satisfies
\begin{align}\label{chardonboundassume}
    \|\hat{\bv}-\bv^*\|_2 \le C\sqrt{\frac{d}{nR}} \qquad \textrm{and}\qquad |\hat{R}-R|\le C\sqrt{\frac{R^3d}{n}}
\end{align}
(for some universal constant $C$) with probability at least $1-e^{-d}$. We shall proceed on these high-probability events.  Note that $|\hat{R}-R|=O(\sqrt{R^3d/n})$ remains useful as it asserts that,  under $n\gtrsim Rd$, $\|\hat{\btheta}_{\mle}\|_2$ and $R$ are at the same order:   
\begin{align}\label{correctorder} 
   R/2\le \hat{R} \le 2R. 
\end{align}

\subsection{Deterministic Reduction}
Recall that $\bZ_n(\btheta)$ is the   gradient deviation defined in (\ref{zndef}). For \(r>0\) and \(\bv\in\mathbb S^{d-1}\), define
\begin{align}\label{defineSn}
     S_n(r,\bv)
    :=
    \langle \bZ_n(r\bv),\bv\rangle. 
\end{align}
On the localization event (\ref{correctorder}), the following lemma gives a deterministic bound on $|\hat{R}-R|$.

\begin{lem}
\label{lem:radial-score} 
In our setting, on the high-probability event of \(\frac{R}{2}\le \hat R\le 2R\) as in (\ref{correctorder}), it holds  that 
\[
    |\hat R-R|
    \le
    C R^3
    \big\{
        \|\hat \bv-\bv^*\|_2^2
        +
        |S_n(\hat R,\hat \bv)|
    \big\}.
\]
\end{lem}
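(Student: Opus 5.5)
We will prove the lemma by testing the first-order optimality condition of the MLE against the direction $\hat\bv$. Since $\nabla L_n(\hat R\hat\bv)=\bm{0}$, we have
\[
0=\langle\nabla L_n(\hat R\hat\bv),\hat\bv\rangle=\langle \nabla L(\hat R\hat\bv),\hat\bv\rangle+S_n(\hat R,\hat\bv).
\]
By Lemma \ref{lem:population-gradient}, $\nabla L(\btheta)=\mathbb{E}[(s(\bx^\top\btheta)-y)\bx]=q(\|\btheta\|_2)\frac{\btheta}{\|\btheta\|_2}-q(R)\bv^*$. The same rotational-invariance computation as in the proof of Lemma \ref{lem:population-bias-1dmle} then gives
\[
\langle \nabla L(\hat R\hat\bv),\hat\bv\rangle=q(\hat R)-\alpha q(R),\qquad \alpha:=\langle\hat\bv,\bv^*\rangle .
\]
The optimality condition therefore becomes the scalar identity
\[
q(\hat R)-q(R)=-(1-\alpha)q(R)-S_n(\hat R,\hat\bv).
\]

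Next we use $1-\alpha=\frac12\|\hat\bv-\bv^*\|_2^2$ and $|q(R)|\le\sqrt{2/\pi}$. These bound the right-hand side by $C\big(\|\hat\bv-\bv^*\|_2^2+|S_n(\hat R,\hat\bv)|\big)$.

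For the left-hand side, the mean value theorem gives $|q(\hat R)-q(R)|=q'(\tilde r)|\hat R-R|$ for some $\tilde r$ between $\hat R$ and $R$. On the event (\ref{correctorder}) we have $\tilde r\in[R/2,2R]$. Lemma \ref{lem:qprime} gives $q'(r)\asymp(1+r)^{-3}$, and since $R\ge1$ this yields $q'(\tilde r)\ge cR^{-3}$ uniformly on that interval. Dividing by $q'(\tilde r)$ yields $|\hat R-R|\le CR^3\{\|\hat\bv-\bv^*\|_2^2+|S_n(\hat R,\hat\bv)|\}$.

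The argument is essentially deterministic. The only point needing care is the lower bound on $q'$ over the whole interval between $\hat R$ and $R$. This is exactly why the localization event $R/2\le\hat R\le2R$ is assumed: without it, $\tilde r$ could be much larger than $R$ and the factor $R^3$ would degrade. We must also check that the population-gradient identity holds for arbitrary $\btheta=\hat R\hat\bv$. Here we treat $\hat\bv$ and $\hat R$ as fixed arguments, which is legitimate because $S_n$ is defined pointwise and the expectation in $\nabla L$ is evaluated at a deterministic argument before substitution.
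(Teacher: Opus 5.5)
Your proposal is correct and follows essentially the same route as the paper. Like the paper, you project the stationarity condition $\nabla L_n(\hat R\hat\bv)=\bm{0}$ onto $\hat\bv$ via Lemma~\ref{lem:population-gradient}, bound the right-hand side using $|q(R)|\le\sqrt{2/\pi}$ and $1-\langle\hat\bv,\bv^*\rangle=\frac12\|\hat\bv-\bv^*\|_2^2$, and invert $q$ by the mean value theorem with $q'\gtrsim R^{-3}$ on $[R/2,2R]$ from Lemma~\ref{lem:qprime}; the only difference is that the inner-product identity follows directly from Lemma~\ref{lem:population-gradient}, so no fresh rotational-invariance computation is needed.
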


\begin{proof}
Since \(\hat\btheta_{\mle}=\hat R\hat \bv\) is a stationary point of \(L_n\), we have
$
    \nabla L_n(\hat R\hat \bv)=\bm{0}.$ 
By definition of \(\bZ_n(\btheta)\) in (\ref{zndef}), we can also write
$
    \nabla L_n(\hat R\hat \bv)
    =
    \nabla L(\hat R\hat \bv)
    +
    \bZ_n(\hat R\hat \bv)$. By substituting $
    \nabla L_n(\hat R\hat \bv)=\bm{0}$ and $  \nabla L(\hat R\hat \bv)=  q(\hat R)\hat \bv
    -
    q(R)\bv^*$ from Lemma~\ref{lem:population-gradient}, we obtain 
\[
    q(\hat R)\hat \bv
    -
    q(R)\bv^*
    +
    \bZ_n(\hat R\hat \bv) = \bm{0}.  
\]
Taking inner product with \(\hat \bv\) and using the convention in (\ref{defineSn}) establish 
$
    q(\hat R)
    -
    q(R)\langle \bv^*,\hat \bv\rangle
    +
    S_n(\hat R,\hat \bv) =0.$
Rearranging   yields 
\begin{align}
     q(\hat R)-q(R)
    =
    -(1-\langle \bv^*,\hat{\bv}\rangle)q(R)
    -
    S_n(\hat R,\hat \bv), \label{firstorderequ}
\end{align}
and then triangle inequality further gives
\begin{align}\label{mainine}
    | q(\hat R)-q(R)| \le |q(R)|\big|1-\langle \bv^*,\hat \bv\rangle\big| + |S_n(\hat{R},\hat{\bv})|
\end{align}
By the mean value theorem and $\hat{R}\in [\frac{R}{2},2R]$, there exists \(\xi\in [\frac{R}{2},2R]\) such that
\(
    |q(\hat R)-q(R)|=|q'(\xi)||\hat R-R|.\)
Under $R\ge 1$, 
Lemma~\ref{lem:qprime} gives 
$q'(\xi)\asymp R^{-3},$ hence for some universal constant $c>0$, 
\begin{align}\label{taylor}
    |q(\hat{R})-q(R)| \ge \frac{c|\hat{R}-R|}{R^3}.
\end{align}
We now combine (\ref{mainine}) and (\ref{taylor}), then use $|q(R)|\le\sqrt{2/\pi}$ and  $|1-\langle \bv^*,\hat{\bv}\rangle|= \frac{\|\hat{\bv}-\bv^*\|_2^2}{2}$ (since $\hat{\bv},\bv^*\in \mathbb{S}^{d-1}$). 
The claim follows.
\end{proof}

In light of Lemma \ref{lem:radial-score}, we shall separately bound $R^3\|\hat{\bv}-\bv^*\|_2^2$ and $R^3|S_n(\hat{R},\hat{\bv})|$. By (\ref{chardonboundassume}), we already have \(
    R^3\|\hat{\bv}-\bv^*\|_2^2 \lesssim \frac{R^2d}{n},\)
which is within the desired  error bound (\ref{thm1norm}). The only remaining work is to control \(S_n(\hat R,\hat \bv) =\big\langle \nabla L_n(\hat{R}\hat{\bv})-\nabla L(\hat{R}\hat{\bv}),\hat{\bv}\big\rangle.\)

To this end, we start with the
decomposition
\begin{align}\label{decomposeSn}
     &S_n(\hat R,\hat \bv)
    =
    S_n(\hat R,\bv^*)
    +
    \big\{
        S_n(\hat R,\hat \bv)-S_n(\hat R,\bv^*)
    \big\}\\\label{decomposeSn2}
    \Longrightarrow\,& |S_n(\hat R,\hat \bv)|
    \le 
    |S_n(\hat R,\bv^*)|
    +
    |\big\{
        S_n(\hat R,\hat \bv)-S_n(\hat R,\bv^*)
    \big\}| 
\end{align}
For fixed $r>0$ and $\bv\in \mathbb{S}^{d-1}$, by (\ref{zndef}) and (\ref{defineSn}), \(S_n(r,\bv)\)    
is the mean of i.i.d. centered random variables. Yet $S_n(\hat{R},\bv^*)$ and $S_n(\hat{R},\hat{\bv})-S_n(\hat{R},\bv^*)$ in (\ref{decomposeSn}) can no longer be viewed as the mean of independent variables since $\hat{R},\hat{\bv}$ depend on the data. Our remedy is to take  uniform bounds over the local set $\| \bv-\bv^*\|_2\le \delta$ for some $\delta\asymp \sqrt{d/(nR)}$, $\hat{R}\in [R/2,2R]$. This is justified by (\ref{chardonboundassume})--(\ref{correctorder}). We thus arrive at 
\begin{gather}\label{normsup1}
    |S_n(\hat{R},\bv^*)|\le \sup_{  r\in[R/2,2R]}|S_n(r,\bv^*)|,\\
    |S_n(\hat{R},\hat{\bv})-S_n(\hat{R},\bv^*)|\le \sup_{  r\in[R/2,2R]}\sup_{\bv\in\calV_\delta}|S_n(r,\bv)-S_n(r,\bv^*)|, \label{dirincre}
\end{gather}
where we define $\mathcal V_\delta
    :=
    \{
        \bv\in\mathbb S^{d-1}:
        \|\bv-\bv^*\|_2\le \delta\}$.

\subsection{Bounding the Localized Empirical Processes}\label{sec:boundprocessthm2}
In this subsection, we control the empirical processes arising in (\ref{normsup1})--(\ref{dirincre}). We first use  Bernstein's inequality to establish nonuniform bounds (for fixed $r,\bv$), and then strengthen these to uniform bounds via a covering argument. Here, to invoke Bernstein's inequality, we have to validate the moment bounds in Lemma \ref{lem:bernstein210}. This involves some elementary but technical Gaussian integral estimates; they are collected in Appendix \ref{app:integral}. Once the non-uniform bounds are in place, the covering arguments that extend it to a uniform bound are fairly standard and present no extra subtlety. As such, we relegate the covering arguments to Appendix \ref{app:covering}.

\begin{lem}[Bounding $\sup_r|S_n(r,\bv^*)|$]
\label{lem:true-direction-process}
For some absolute constant $C$, if $n\ge CR$, then with probability at least
\(1-\frac{C}{n}\), 
\[
    \sup_{r\in[R/2,2R]}
    |S_n(r,\bv^*)|
    \le
    C\bigg\{
        \sqrt{\frac{\log(nR)}{nR^3}}
        +
        \frac{\log(nR)}{nR}
    \bigg\}.\]
\end{lem}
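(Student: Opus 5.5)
Fix $r\in[R/2,2R]$ and write $g_i:=\bx_i^\top\bv^*\sim N(0,1)$. Then
\[
S_n(r,\bv^*)=\frac1n\sum_{i=1}^n \xi_i(r),\qquad \xi_i(r):=(s(rg_i)-y_i)g_i-\mathbb{E}[(s(rg_i)-y_i)g_i],
\]
an average of i.i.d. centered one-dimensional variables. The plan has three steps: a pointwise Bernstein bound, an extension to all $r$ by a grid and a Lipschitz estimate, and a union bound over the grid.

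\textbf{Step 1 (pointwise Bernstein bound).} We verify the moment condition of Lemma \ref{lem:bernstein210} for $\xi_i(r)$. Write
\[
s(rg)-y=\big(s(rg)-s(Rg)\big)+\big(s(Rg)-y\big).
\]
\begin{itemize}
\item The second piece has conditional mean zero and conditional variance $s'(Rg)$. It therefore contributes $\mathbb{E}[g^{2}s'(Rg)]\asymp R^{-3}$ to the variance, by \eqref{T4bound1}/Lemma \ref{lem:qprime}.
\item The first piece satisfies $|s(rg)-s(Rg)|\le \min\{1,|r-R||g|\,s'(\min\{r,R\}|g|)\}$. Since $r,R$ are comparable, it is essentially supported on $|g|\lesssim 1/R$ up to exponential tails. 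Its second moment with the factor $g^2$ is again $O(R^{-3})$: on $|g|\lesssim 1/R$ we have $|s(rg)-s(Rg)|\lesssim 1$, and $\mathbb{E}[g^2\mathbf 1\{|g|\lesssim 1/R\}]\asymp R^{-3}$, with the tail controlled by the Gaussian moment bounds in Appendix \ref{app:integral}.
\item For higher moments, $\mathbb{E}|\xi|^p\lesssim \mathbb{E}[|g|^p\,\Gamma(Rg)]$, where $\Gamma$ is a logistic-type weight decaying like $e^{-c|Rg|}$ away from the origin. This is of order $p!\,R^{-p-1}$, so we get variance proxy $\sigma^2\asymp R^{-3}$ and scale parameter $b\asymp R^{-1}$.
\end{itemize}
Bernstein's inequality then gives, for each fixed $r$, with probability $1-2e^{-u}$,
\[
|S_n(r,\bv^*)|\lesssim \sqrt{\frac{u}{nR^3}}+\frac{u}{nR}.
\]

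\textbf{Step 2 (Lipschitz continuity in $r$).} Since $|\partial_r s(rg)g|=s'(rg)g^2\le g^2$, we have
\[
|S_n(r,\bv^*)-S_n(r',\bv^*)|\le |r-r'|\Big(\frac1n\sum_i g_i^2+1\Big).
\]
Chebyshev's inequality (or $\chi^2$ concentration) gives $\frac1n\sum_i g_i^2\le 2$ with probability $1-C/n$, so $S_n(\cdot,\bv^*)$ is $3$-Lipschitz on this event. Take an $\epsilon$-grid of $[R/2,2R]$ with $\epsilon=(nR)^{-2}$. Its cardinality is $N\lesssim R(nR)^2$, and the discretization error is at most $3\epsilon\le (nR)^{-1}$, which is absorbed into $\log(nR)/(nR)$.

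\textbf{Step 3 (union bound).} Apply Step 1 at each grid point with $u=C\log(nR)$ and take a union bound. The failure probability is $2N(nR)^{-C}\le n^{-1}$ once $C$ is large, which yields the claimed bound. The condition $n\ge CR$ is not strictly needed here. It only keeps the second term from dominating meaningfully and aligns with the localization already assumed.

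The main obstacle is Step 1: getting the sharp variance $R^{-3}$ rather than the naive $O(1)$ bound, and the subexponential scale $R^{-1}$. This requires the Gaussian integral estimates $\mathbb{E}[|g|^p s'(\rho|g|)]\lesssim C^p p!\,\rho^{-p-1}$ uniformly over $\rho\asymp R$, together with careful handling of the difference $s(rg)-s(Rg)$. I would first try to bound everything by the single envelope $|g|^p e^{-c\min\{r,R\}|g|}$ plus the $\mathbf 1\{|g|\lesssim 1/R\}$ region, and then invoke \eqref{T4bound1}.
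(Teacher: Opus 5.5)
Your proposal is correct and follows the paper's route. Both arguments first prove a pointwise Bernstein bound from $\mathbb{E}|\xi_i|^p\le C^pp!/R^{p+1}$, which the paper takes from \eqref{eq:centered-radial-moment} in Lemma \ref{lem:1dprobound} (proved via the same envelope $\mathbb{E}[\varepsilon_r^2\mid g]=(s(rg)-s(Rg))^2+s'(Rg)\lesssim e^{-cR|g|}$ you sketch), and then finish with a Lipschitz-plus-net argument over $[R/2,2R]$. The only cosmetic difference is in the Lipschitz step: you get an $O(1)$ constant from $\frac1n\sum_i g_i^2\le 2$, whereas the paper uses $\max_i|g_i|\le 2\sqrt{\log n}$ to get an $O(\log n)$ constant with the net $\varepsilon=1/(nR)$; both suffice.
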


\begin{proof}
Note that  \(
    S_n(r,\bv^*) = \frac{1}{n}\sum_{i=1}^n\big(\big(s(r\bx_i^\top\bv^*)-y_i\big)\bx_i^\top\bv^* - \mathbb{E}\big[\big(s(r\bx_i^\top\bv^*)-y_i\big)\bx_i^\top\bv^*\big]\big).
\)
Let $
    g_i=\bx_i^\top\bv^*,$ then we have \(g_i\sim N(0,1)\) and
$
    y_i\mid g_i\sim \mathrm{Bernoulli}(s(Rg_i)),$ and hence 
    \begin{align} \label{Snexpandver}
        S_n(r,\bv^*) = \frac{1}{n}\sum_{i=1}^n\Big(\big(s(rg_i)-y_i\big)g_i-\mathbb{E}\big[(s(rg_i)-s(Rg_i))g_i\big]\Big) 
    \end{align}    
For fixed $r\in[\frac{R}{2},2R]$, (\ref{eq:centered-radial-moment}) in Lemma \ref{lem:1dprobound} gives 
\[
    \mathbb{E}\big|(s(rg_i)-y_i)g_i - \mathbb{E}[(s(rg_i)-y_i)g_i]\big|^p \le \frac{C^pp!}{R^{p+1}},\quad \textrm{for any integer } p\ge 2.
\] 
Therefore, in view of (\ref{Snexpandver}), Bernstein's inequality (Lemma \ref{lem:bernstein210}) yields that, for any fixed $r\in [\frac{R}{2},2R]$ and $t>0$,
\begin{align}\label{nonuniform1}
    \mathbb{P}\bigg(|S_n(r,\bv^*)|\ge C\bigg[\sqrt{\frac{t}{nR^3}}+\frac{t}{nR}\bigg] \bigg) \le 2e^{-t}.
\end{align}
This immediately implies that the non-uniform bound $S_n(r,\bv^*)\lesssim \sqrt{\frac{\log n}{nR^3}}+\frac{\log n}{nR}$ holds with probability at least $1-2/n$. It turns out that, by a standard covering argument over $[R/2,2R]$, we establish a comparable bound as claimed in the lemma. See Appendix \ref{app:lem53covering}.
\end{proof}

\begin{lem}[Bounding $\sup_{r,\bv}|S_n(r,\bv)-S_n(r,\bv^*)|$]
\label{lem:localized-increment} 
There exist some universal constants $ C>c>0$ such that the following holds.    If $n\ge d$ and  
$R\delta\le c$,
then with probability at least \(1-Ce^{-cd}\),    
\[
    \sup_{r\in[R/2,2R]}
    \sup_{\bv\in\mathcal V_\delta}
    |S_n(r,\bv)-S_n(r,\bv^*)|
    \le C\bigg\{
        \delta\sqrt{\frac{d\log(nR)}{nR}}
        +
        \delta\frac{d\log(nR)}{n}
    \bigg\}.\] 
\end{lem}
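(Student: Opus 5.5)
The plan mirrors the proof of Lemma \ref{lem:true-direction-process}: first a non-uniform Bernstein bound for fixed $(r,\bv)$ whose scale is proportional to $\|\bv-\bv^*\|_2$, then a covering argument over $[R/2,2R]\times\calV_\delta$.

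\textbf{Step 1: the fixed-$(r,\bv)$ increment.} Fix $r\in[R/2,2R]$ and $\bv\in\calV_\delta$. Then $S_n(r,\bv)-S_n(r,\bv^*)$ is the centered mean of i.i.d. copies of
\[
X_i:=\big(s(r\bx_i^\top\bv)-y_i\big)\bx_i^\top\bv-\big(s(r\bx_i^\top\bv^*)-y_i\big)\bx_i^\top\bv^* .
\]
Write $g_i=\bx_i^\top\bv^*$ and $\bx_i^\top\bv=g_i+h_i$, where $h_i=\bx_i^\top(\bv-\bv^*)$ is Gaussian with variance $\le\|\bv-\bv^*\|_2^2$. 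Split
\[
X_i=\big(s(r\bx_i^\top\bv)-y_i\big)h_i+\big(s(r(g_i+h_i))-s(rg_i)\big)g_i .
\]
For the first piece, $|s(\cdot)-y_i|\le 1$, and it is effectively small (of size $\lesssim s'$-type) outside a window $|g_i|\lesssim 1/R$; this window has probability $\asymp 1/R$. For the second piece, $|s(r(g+h))-s(rg)|\le \min\{1,\,r|h|\sup s'\}$, and $s'(rg)$ localizes $g$ to $|g|\lesssim 1/R$. Using the Gaussian moment estimates of Appendix~\ref{app:integral}, I expect
\[
\mathbb E|X_i-\mathbb EX_i|^p\le C^p p!\,\|\bv-\bv^*\|_2^p\,R^{-1}.
\]
The condition $R\delta\le c$ keeps $r|h_i|$ of order $\lesssim 1$ typically, so linearizing $s$ is legitimate. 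Lemma~\ref{lem:bernstein210} then gives, with probability $\ge 1-2e^{-u}$,
\[
|S_n(r,\bv)-S_n(r,\bv^*)|\lesssim \|\bv-\bv^*\|_2\Big(\sqrt{\tfrac{u}{nR}}+\tfrac{u}{n}\Big).
\]
This is (\ref{fixedrvbound}).

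\textbf{Step 2: uniformization.} Take an $\epsilon$-net of $[R/2,2R]\times\calV_\delta$ with $\epsilon=(nR)^{-C}$. The net has cardinality at most $\exp(Cd\log(nR))$, because $\calV_\delta$ is a ball of radius $\delta$ in dimension $d$ and the radial net is polynomial. Choose $u\asymp d\log(nR)$ and take a union bound; the per-point bound then becomes $\delta\big(\sqrt{d\log(nR)/(nR)}+d\log(nR)/n\big)$ for all net points simultaneously. The failure probability is $\le Ce^{-cd}$, which already absorbs the net cardinality and only reduces the log factor.

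\textbf{Step 3: off-net error.} The map $(r,\bv)\mapsto S_n(r,\bv)$ is Lipschitz, with constant bounded by a polynomial in $R$ times $\frac1n\sum_i\|\bx_i\|_2^2$ plus the deterministic expectation part. On the event $\frac1n\sum\|\bx_i\|_2^2\le 2d$, which has probability $1-e^{-cnd}$ since $n\ge d$, the Lipschitz constant is $\mathrm{poly}(nR)$. Hence the discretization error is $\le (nR)^{-C'}$, which is negligible against the target. Here one should subtract $S_n(r,\bv^*)$ at the same $r$, so that the increments at the net points remain of size $\delta$.

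\textbf{Main obstacle.} The hard step is the moment bound in Step 1, namely showing that the variance proxy carries both factors $\|\bv-\bv^*\|_2^2$ and $1/R$, rather than just $\|\bv-\bv^*\|_2^2$. This requires splitting the Gaussian integral into $|g|\lesssim 1/R$ and its complement. On the complement, $s(rg)-s(R g)$ is exponentially small, and the term $(s-y)h$ must use the fact that $y_i$ is nearly deterministic: its conditional variance is $s'(Rg)$. My first attempt would be to bound $\mathbb E[|s(r\bx^\top\bv)-y|^p|h|^p]$ by conditioning on $g$ and $h$, using $\mathbb E[|s(a)-y|\mid g]\le s'$-type bounds with $a$ and $Rg$ sharing the same sign except when $|g|\lesssim|h|$, and treating that exceptional event (probability $\lesssim\delta$) separately.
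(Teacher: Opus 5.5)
Your proposal is correct and follows the paper's proof. The per-point Bernstein bound rests on $\mathbb{E}|\Delta_i|^p\le C^pp!\,\eta^p/R$. The paper gets this in Lemma~\ref{lem:directincrebound} by writing $\Delta_i=\eta W\int_0^1A_\tau\,d\tau$ and rotating each slice into the nearby one-dimensional model of Lemma~\ref{lem:1dprobound}. Your split into $(s-y_i)h_i$ plus a sigmoid difference, with the sign-disagreement event $\{|g_i|\lesssim|h_i|\}$ treated separately, reproduces the same estimate. Apply that split to the sigmoid-difference piece as well: its $s'$ is evaluated between $rg_i$ and $r(g_i+h_i)$, not at $rg_i$, and a crude bound $e^{-r|g_i|+r|h_i|}$ there would cost $\mathbb{E}e^{pr|h_i|}$, which destroys the $p!$ growth.

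The uniformization is the same union bound at level $t\asymp d\log(nR)$. The paper calibrates its nets to the target using $O(1)$- and $O(\sqrt d)$-Lipschitz constants on the event of Lemma~\ref{lem:operconcen}. Your polynomially fine net with a crude Lipschitz constant works whenever $\delta$ is at least inverse-polynomial in $nR$, which holds in the application $\delta\asymp\sqrt{d/(nR)}$; the paper's $r$-net of size $CR/\Gamma$ carries a comparable implicit requirement.
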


\begin{proof}
For \(r>0\) and \(\bv\in\mathbb S^{d-1}\), define 
$
     \psi_i(r,\bv)
    :=
    \{s(r \bx_i^\top\bv)-y_i\}\bx_i^\top\bv$ and $
    m(r,\bv):=\mathbb{E}[\psi_i(r,\bv)].$ Then
$S_n(r,\bv)=
    \frac1n\sum_{i=1}^n
    \{\psi_i(r,\bv)-m(r,\bv)\},$  and in turn we can write 
\begin{align}\label{differiidsum}
      S_n(r,\bv)-S_n(r,\bv^*)
    =
    \frac1n\sum_{i=1}^n
    \left[
        \Delta_i(r,\bv)-\mathbb{E}\Delta_i(r,\bv)
    \right], \quad\textrm{where}~~\Delta_i(r,\bv):=\psi_i(r,\bv)-\psi_i(r,\bv^*). 
\end{align}
As such, we seek to bound $\sup_{r\in[R/2,2R]}
    \sup_{\bv\in\mathcal V_\delta}\big| \frac1n\sum_{i=1}^n
    \left[
        \Delta_i(r,\bv)-\mathbb{E}\Delta_i(r,\bv)
    \right]\big|.$ Fix \(r\in[R/2,2R]\) and \(\bv\in\mathcal V_\delta\), we let $\eta=\|\bv-\bv^*\|_2.$ Then Lemma \ref{lem:directincrebound} yields
 \begin{align}
     \mathbb{E}|\Delta_i(r,\bv)|^p
    \le
    C^p p!\frac{\eta^p}{R},~~\textrm{for any integer \(p\ge 2\)}. 
   \label{desiredmoment}
\end{align}
In view of this and using $\eta \le\delta$, for any fixed pair of $(r,\bv)\in [R/2,2R]\times \calV_\delta$, Lemma \ref{lem:bernstein210} yields 
\begin{align}\label{fixedrvbound}
   \mathbb{P}\bigg(\big|S_n(r,\bv)-S_n(r,\bv^*)\big|
    \le C\bigg\{
        \delta\sqrt{\frac{t}{nR}}+\delta\frac{t}{n}
    \bigg\}\bigg)\ge 1-2e^{-t},\quad \forall t>0.
\end{align}
By setting $t\asymp\log n$, this implies that the   non-uniform bound $|S_n(r,\bv)-S_n(r,\bv^*)| \lesssim \delta\sqrt{\frac{\log n}{nR}}+\delta\frac{\log n}{n}$ holds with at least $1-2/n$ probability. However, what we desire is the uniform bound over $(r,\bv)\in [R/2,2R]\times \calV_\delta$, and therefore a covering argument should be invoked. As it  turns out, this worsens the non-uniform bound with extra dependence on $d$ and yields the claimed bound in the lemma; see Appendix \ref{app:lem54covering}.
\end{proof}

\subsection{Putting Pieces Together}
We now put all the pieces together to conclude the proof of Theorem \ref{thm:upper}. 
\begin{proof}[Proof of Theorem \ref{thm:upper}] 
By Lemma~\ref{lem:radial-score} and (\ref{decomposeSn2})--(\ref{dirincre}),  
\begin{align}\label{normuppergive}
      |\hat R-R|
    \lesssim R^3
    \bigg\{
        \|\hat \bv-\bv^*\|_2^2
        +
        \sup_{  r\in[R/2,2R]}|S_n(r,\bv^*)| + \sup_{  r\in[R/2,2R]}\sup_{\bv\in\calV_\delta}|S_n(r,\bv)-S_n(r,\bv^*)|
    \bigg\}
\end{align}
holds for some $\delta\asymp \sqrt{d/(nR)}$. 
Under $n\gtrsim Rd$,  $R\delta\asymp \sqrt{Rd/n}$ is sufficiently small, hence  
 Lemma~\ref{lem:localized-increment} applies. Substituting $\|\hat{\bv}-\bv^*\|_2^2\lesssim\frac{d}{nR}$   from (\ref{chardonboundassume}) and the bounds in Lemmas \ref{lem:true-direction-process}--\ref{lem:localized-increment} into (\ref{normuppergive}) yields  (\ref{thm1norm}). The proof is complete. 
\end{proof}

\section{Proof of Theorem \ref{thm:minimaxupper}}\label{sec:proofminimaxupper}
Following the developments in Section \ref{sec:minimaxnorm}, this section proves Theorem \ref{thm:minimaxupper}. 
\subsection{Locating the Estimate in (\ref{1dmle})}\label{sec:local1dmle}
We have already shown in Lemma \ref{lem:population-bias-1dmle} that $\hat{r}$ found by (\ref{1dmle}) approximates $r^\circ$ rather than $R$. In the following, we further control $|\hat{r}-r^\circ|$ by a convex localization argument. 
\begin{lem} 
\label{lem:profile-local}
Conditioning on $(\breve{\bv},\breve{R})$ and using the randomness of $\{(\bx_i,y_i)\}_{i\in \calI_2}$, there exist absolute constants $C>c>0$ such that the following holds. Given any $t\ge 1$, if $\breve{\bv}^\top\bv^*\ge 0$, $\sqrt{1-(\breve{\bv}^\top\bv^*)^2}\le cR^{-1}$, $\frac{R}{2}\le \breve{R}\le 2R$ and $m_2\ge CRt$, then with
probability at least $1-4e^{-t}$, 
\begin{equation}
  |\hat r-r^\circ|
  \le
  C 
    \sqrt{\frac{R^3t}{m_2}}.
  \label{eq:profile-local-bound}
\end{equation} 
\end{lem}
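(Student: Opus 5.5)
We argue by convex localization for the one-dimensional empirical loss
\[
\hat{\mathcal L}(r):=\frac{1}{m_2}\sum_{i\in\calI_2}\big[\log(1+e^{rg_i})-y_irg_i\big],\qquad g_i:=\bx_i^\top\breve{\bv}.
\]
Conditional on $\breve{\bv}$, the $g_i$ are i.i.d.\ $N(0,1)$. Its derivative is
\[
\hat{\mathcal L}'(r)=\frac{1}{m_2}\sum_{i\in\calI_2}\big(s(rg_i)-y_i\big)g_i,
\]
which is nondecreasing in $r$ because $\hat{\mathcal L}$ is convex. Lemma \ref{lem:population-bias-1dmle} gives the population counterpart $\mathcal L_{\breve{\bv}}'(r)=q(r)-\alpha q(R)$, with $\alpha=\breve{\bv}^\top\bv^*$. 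The hypothesis $\sqrt{1-\alpha^2}\le cR^{-1}$ implies $1-\alpha\le c^2R^{-2}$, so that lemma also gives $r^\circ\in[3R/4,R]$. Set $h:=C\sqrt{R^3t/m_2}$. Since $m_2\ge CRt$ with $C$ large, we have $h\le R/8$, and hence $r_\pm:=r^\circ\pm h\in[R/2,9R/8]\subset[0,4\breve R]$, using $\breve R\ge R/2$.

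By monotonicity, if $\hat{\mathcal L}'(r_+)>0$ and $\hat{\mathcal L}'(r_-)<0$, then every minimizer of $\hat{\mathcal L}$ over $[0,4\breve R]$ lies in $[r_-,r_+]$. This holds even for constrained minimizers: convexity forces a minimizer at a boundary point with a one-sided sign condition, and both endpoints $0$ and $4\breve R$ lie outside $[r_-,r_+]$. The theorem therefore reduces to two one-point deviation bounds.

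For the population drift, the mean value theorem gives $\mathcal L_{\breve{\bv}}'(r_\pm)=q(r^\circ\pm h)-q(r^\circ)$, which equals $\pm q'(\xi)h$ for some $\xi\in[R/2,2R]$. Lemma \ref{lem:qprime} then yields $|\mathcal L_{\breve{\bv}}'(r_\pm)|\ge c_1h/R^3=c_1C\sqrt{t/(m_2R^3)}$.

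For the fluctuation at the fixed points $r_\pm$ (the $\calI_2$ samples are fresh, so no union bound is needed), we apply Bernstein's inequality (Lemma \ref{lem:bernstein210}). This requires the moment bound
\[
\mathbb{E}\big|(s(rg)-y)g-\mathbb{E}[\cdot]\big|^p\le \frac{C^pp!}{R^{p+1}},\qquad r\asymp R,
\]
where now $y\mid\bx\sim\Bern(s(R\bx^\top\bv^*))$ and $g=\bx^\top\breve{\bv}$ is not exactly aligned with $\bv^*$. This is the main obstacle, since (\ref{eq:centered-radial-moment}) in Lemma \ref{lem:1dprobound} is stated for the aligned case. We handle it as follows. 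Write $\bx^\top\bv^*=\alpha g+\beta z$, with $z$ independent of $g$ and $\beta=\sqrt{1-\alpha^2}\le c/R$. Bound $|s(rg)-y|$ by $|s(rg)-s(Rg)|+|s(Rg)-s(R\bx^\top\bv^*)|+|s(R\bx^\top\bv^*)-y|$. The extra term is controlled by $\min\{1,R\beta|z|+R(1-\alpha)|g|\}$, and since $R\beta\le c$, it keeps the effective mass on the window $|g|\lesssim 1/R$ up to $z$-tails. The moment integrals are then the same ones as in Appendix \ref{app:integral}, giving variance of order $R^{-3}$ and scale of order $R^{-1}$. Bernstein then gives, with probability at least $1-2e^{-t}$ for each sign,
\[
\big|\hat{\mathcal L}'(r_\pm)-\mathcal L_{\breve{\bv}}'(r_\pm)\big|\le C_2\Big(\sqrt{\tfrac{t}{m_2R^3}}+\tfrac{t}{m_2R}\Big)\le 2C_2\sqrt{\tfrac{t}{m_2R^3}},
\]
where the last step uses $m_2\ge Rt$.

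To conclude, choose $C$ so that $c_1C>2C_2$. On the intersection of the two events, which has probability at least $1-4e^{-t}$, we get $\hat{\mathcal L}'(r_+)>0>\hat{\mathcal L}'(r_-)$. Therefore $|\hat r-r^\circ|\le h$, which is (\ref{eq:profile-local-bound}).
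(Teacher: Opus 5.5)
Your overall scheme is the paper's: two-point convex localization at $r^\circ\pm h$, population drift $\pm q'(\xi)h\gtrsim h/R^3$ via Lemma \ref{lem:qprime}, Bernstein at the two fixed points, and monotonicity of $\hat{\mathcal L}'$. Those parts are fine. The gap is in the moment bound you single out as the main obstacle.

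First, you misread Lemma \ref{lem:1dprobound}. It is not an aligned-case statement. It is formulated for exactly your misaligned model: $Y\sim\Bern(s(R[\alpha Z+\beta T]))$ with $0\le\beta\le c_0/R$, $\varepsilon_r=Y-s(rZ)$ and $r\in[R/4,2R]$. Take $Z=\bx_i^\top\breve{\bv}$ and $T=\bx_i^\top\bu$, where $\bu$ is the unit vector along $\bP^\perp_{\breve{\bv}}\bv^*$. Then your summand is $-(Z\varepsilon_r-\mathbb{E}[Z\varepsilon_r])$, and \eqref{eq:centered-radial-moment} gives the required $C^pp!/R^{p+1}$ directly. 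This is what the paper does.

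Second, the substitute argument you sketch would not deliver that bound. Controlling $|s(Rg)-s(R\bx^\top\bv^*)|$ by the Lipschitz envelope $\min\{1,R\beta|z|+R(1-\alpha)|g|\}$ does not localize to $|g|\lesssim 1/R$. For $|g|,|z|\asymp 1$ the envelope is of size $R\beta$, so it contributes $\asymp (R\beta)^2\,\mathbb{E}|g|^p$ to the $p$-th moment. Under the hypothesis $R\beta\le c$ this is $O(1)$, not $O(R^{-(p+1)})$. The fluctuation of $\hat{\mathcal L}'$ is then of order $\sqrt{t/m_2}$, and you only get $|\hat r-r^\circ|\lesssim\sqrt{R^6t/m_2}$. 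Even in the downstream application, where $(R\beta)^2\asymp R(d+t)/n$, this exceeds $R^{-3}$ unless $n\gtrsim R^4(d+t)$.

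The missing idea is saturation of $s$ rather than its Lipschitz continuity. On $\{|g|>4\beta|z|\}$, the arguments $R\bx^\top\bv^*=R(\alpha g+\beta z)$ and $rg$ share a sign, and both have magnitude $\gtrsim R|g|$. Hence $|s(R\bx^\top\bv^*)-s(rg)|$ and $s'(R\bx^\top\bv^*)$ are both $\lesssim e^{-cR|g|}$. On the complement you only pay $\mathbb{E}\big[|g|^p\mathbf{1}\{|g|\le 4\beta|z|\}\big]\lesssim C^p\beta^{p+1}\mathbb{E}|z|^{p+1}\lesssim C^pp!/R^{p+1}$.

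With that argument, or simply by citing Lemma \ref{lem:1dprobound}, the rest of your proof goes through. One small bookkeeping point: keep the localization constant in $h$ separate from the sample-size constant in $m_2\ge CRt$. Getting $h\le R/8$ requires the latter to dominate the square of the former.
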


\begin{proof}
Let $\ell(r)=\frac{1}{m_2}\sum_{i\in\calI_2}\big[\log(1+\exp(r\bx_i^\top\breve{\bv}))-y_ir\bx_i^\top\breve{\bv}\big]$ be the loss function in (\ref{1dmle}).
For a fixed $r\in[R/4,2R]$, we have 
\(
  \ell'(r)
  :=\frac{1}{m_2}\sum_{i\in\calI_2}
  \{s(r\bx_i^\top\breve{\bv})-y_i\}\bx_i^\top\breve{\bv}.\) It was shown in the proof of Lemma \ref{lem:population-bias-1dmle} that 
  \begin{align}
      \mathbb{E}\ell'(r)=q(r)-q(r^\circ).\label{Eellpi}
  \end{align} We let
\(
  X_i(r):=\{s(r\bx_i^\top\breve{\bv})-y_i\}\bx_i^\top\breve{\bv}-\mathbb{E}\ell'(r) \) so that $\ell'(r)-\mathbb{E}\ell'(r)=\frac{1}{m_2}\sum_{i\in\calI_2}X_i(r)$.
The variables $X_i(r)$ are centered and independent. To apply Lemma \ref{lem:1dprobound}, write
\(
    \alpha=\langle \breve{\bv},\bv^*\rangle,
    ~
    \beta=\sqrt{1-\alpha^2},\)
and, when $\beta>0$, let $\bu=(\bv^*-\alpha\breve{\bv})/\beta$; when $\beta=0$, choose any unit vector $\bu\perp\breve{\bv}$. Then
$Z_i=\bx_i^\top\breve{\bv}$ and $T_i=\bx_i^\top\bu$ are independent standard normal variables, while
\(
    y_i\mid (Z_i,T_i)
    \sim \Bern\big(s(R[\alpha Z_i+\beta T_i])\big).\)
Since $\beta\le cR^{-1}$ and $r\in[R/4,2R]$, the centered radial moment bound in Lemma \ref{lem:1dprobound} applies. Indeed, with $\varepsilon_r=Y_i-s(rZ_i)$,
$X_i(r)=-(Z_i\varepsilon_r-\mathbb{E}[Z_i\varepsilon_r])$. Therefore, for every integer $q\ge2$,
\(
  \mathbb{E}|X_i(r)|^q
  \le\frac{C^qq!}{R^{q+1}}.
\) 
Consequently,  Lemma~\ref{lem:bernstein210} yields
\begin{equation}
  \mathbb{P}\bigg(
    \big|\ell'(r)-\mathbb{E}\ell'(r)\big|
    >C\bigg\{
      \sqrt{\frac{t}{m_2R^3}}+\frac{t}{m_2R}
    \bigg\} 
  \bigg)
  \le2e^{-t}.
  \label{eq:profile-score-tail}
\end{equation}
Set $h:=C_0\big\{
    \sqrt{\frac{R^3t}{m_2}}+\frac{R^2t}{m_2}
  \big\}$ for a sufficiently large universal constant $C_0$, and because of $m_2\ge CRt$,
we have $h\le R/4$. Moreover, by Equation (\ref{eq:population-attenuation}) in Lemma \ref{lem:population-bias-1dmle}, we have $R-r^\circ\asymp R^3(1-\breve{\bv}^\top\bv^*)$. Combining with $\sqrt{1-(\breve{\bv}^\top\bv^*)^2}\le cR^{-1}$ for some small enough $c$,  we have $\frac{3R}{4}\le r^\circ\le R$. Therefore, $r^\circ\pm h\in[R/2,5R/4]$.  Since
$\breve R\ge R/2$, $r^\circ\pm h$ lie in the feasible interval
$[0,4\breve R]$.  By 
\eqref{Eellpi}, Lemma \ref{lem:qprime} and Taylor's theorem, there exist some $\xi_1,\xi_2\in[R/2,5R/4]$ such that 
\begin{align*}
   \mathbb{E}\ell'(r^\circ+h)
  &=q(r^\circ+h)-q(r^\circ)= q'(\xi_1)h\ge c_1hR^{-3},
  \\
    \mathbb{E}\ell'(r^\circ-h)
  &=q(r^\circ-h)-q(r^\circ)
  =-q'(\xi_2)h\le-c_1hR^{-3}
\end{align*}
for some universal constant $c_1>0.$ Apply \eqref{eq:profile-score-tail} to $r^\circ+h$ and $r^\circ-h$, then with probability at least $1-4\exp(-t)$, a large enough $C_0$ ensures 
\begin{gather*}
    \ell'(r^\circ+h) \ge \mathbb{E}  \ell'(r^\circ+h)-|\ell'(r^\circ+h)- \mathbb{E}  \ell'(r^\circ+h)|>0,\\
    \ell'(r^\circ-h) \le\mathbb{E}\ell'(r^\circ-h) + | \ell'(r^\circ-h) -\mathbb{E}\ell'(r^\circ-h)| <0. 
\end{gather*}
Note that  $\ell'(r)$ is
nondecreasing due to the convexity of $\ell(r)$.  Its constrained minimizer therefore lies in
$[r^\circ-h,r^\circ+h]$. Combining with $m_2\gtrsim Rt$, we arrive at  
\eqref{eq:profile-local-bound}. 
\end{proof}

\subsection{Concentration of the Debiasing U-Statistic}
Next, we establish the concentration bound for the $U$-statistic $\hat{B}$ to concentrate about $\mathbb{E}\hat{B}=(1-\alpha^2)q(R)^2=\beta^2q(R)^2$, where we write $\beta=\sqrt{1-\alpha^2}$. To summarize, we first decompose the concentration error into three terms $L_m+Q_m^{\mathrm{sc}}+Q_m^{\mathrm G}$; see (\ref{eq:Hoeffding})--(\ref{eq:Q-def}) and  (\ref{eq:L-exact})--(\ref{eq:Q-split}). The techniques for $L_m$ and $Q_m^{\mathrm{sc}}$ are routine applications of Bernstein's inequality (Lemma \ref{lem:bernstein210}), built upon certain moment bounds developed in Appendix \ref{app:integral}. Since these techniques have appeared in Sections \ref{sec:boundprocessthm2} and \ref{sec:local1dmle}, we relegate the mathematical details for bounding $L_m$ and $Q_m^{\mathrm{sc}}$ to Appendix \ref{app:bernsteinminimax} and only note the final bounds in the main paper. In contrast, controlling $Q_m^{\rm G}$ requires new techniques and will be the focus of the following proof.

\begin{lem}
\label{lem:direct-U}
Conditioning on $(\breve{\bv},\hat{r})$ and using the randomness of $\{(\bx_i,y_i)\}_{i\in\calI_3}$, there exist universal constants $C>c>0$ such that the following holds. Given any $t\ge 1$, if $\langle \breve{\bv},\bv^*\rangle\ge 0$, $ 1-(\breve{\bv}^\top\bv^*)^2\le \min\{\frac{C(d+t)}{m_3R},\frac{c}{R^2}\}$, $\hat{r}\in [\frac{R}{2},\frac{3R}{2}]$, $m_3\ge CRt$, 
then with probability at least $1-Ce^{-ct}$, 
\begin{equation}   
    |\hat B-q(R)^2\beta^2|
    <C\frac{\sqrt{dt}+t}{m_3R}. 
  \label{eq:direct-U-bound}
\end{equation}
\end{lem}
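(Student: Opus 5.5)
The plan is to follow the Hoeffding decomposition of the $U$-statistic $\hat B$ and to control each piece by Bernstein's inequality (Lemma \ref{lem:bernstein210}) or Hanson--Wright (Lemma \ref{lem:hanson-wright}). Throughout, I condition on $(\breve\bv,\hat r)$ and write $m:=m_3$. As in the proof of Lemma \ref{lem:profile-local}, I introduce an orthonormal frame: $Z_i=\bx_i^\top\breve\bv$, $T_i=\bx_i^\top\bu$ with $\bu=(\bv^*-\alpha\breve\bv)/\beta$, and $\bG_i:=(\bI_d-\breve\bv\breve\bv^\top-\bu\bu^\top)\bx_i$. Here $\bG_i$ is a standard Gaussian vector on a $(d-2)$-dimensional subspace, and it is independent of $(Z_i,T_i,y_i)$, since $y_i$ depends on $\bx_i$ only through $\alpha Z_i+\beta T_i$. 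With the residual $\varepsilon_i:=y_i-s(\hat r Z_i)$ this gives
\[
\bw_i=\varepsilon_iT_i\,\bu+\varepsilon_i\bG_i .
\]
By Lemma \ref{lem:EwEB}, $\bmu:=\mathbb{E}\bw_i=q(R)\beta\,\bu$, so $\mathbb{E}[\varepsilon_iT_i]=q(R)\beta$ and $\mathbb{E}[\varepsilon_i\bG_i]=\bm 0$. Writing $\bxi_i=\bw_i-\bmu$, I would use
\[
\hat B-q(R)^2\beta^2=\frac{2}{m}\sum_i\langle\bmu,\bxi_i\rangle+\frac{1}{m(m-1)}\sum_{i\ne j}\langle\bxi_i,\bxi_j\rangle=:L_m+Q_m .
\]
I then split $Q_m=Q_m^{\rm sc}+Q_m^{\rm G}$, where $Q_m^{\rm sc}=\frac{1}{m(m-1)}\sum_{i\neq j}a_ia_j$ with $a_i:=\varepsilon_iT_i-q(R)\beta$, and $Q_m^{\rm G}=\frac{1}{m(m-1)}\sum_{i\ne j}\varepsilon_i\varepsilon_j\langle\bG_i,\bG_j\rangle$.

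\emph{Linear and scalar terms.} The basic input is a moment bound of the form
\[
\mathbb{E}|\varepsilon_i|^p|T_i|^p\le C^pp!/R \quad\text{for } p\ge 2,
\]
valid because $\hat r\in[R/2,3R/2]$ and $\beta\le c/R$. The reason is that the residual is non-negligible only on the event $|Z_i|\lesssim 1/R$, which has probability $\asymp 1/R$; I would take this from the Gaussian integral estimates in Appendix \ref{app:integral}. Since $L_m=\frac{2q(R)\beta}{m}\sum_i a_i$, Bernstein gives
\[
|L_m|\lesssim\beta\Big(\sqrt{t/(mR)}+t/m\Big).
\]
Using $\beta^2\lesssim (d+t)/(mR)$ and $m\gtrsim Rt$, this is $\lesssim(\sqrt{dt}+t)/(mR)$. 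For $Q_m^{\rm sc}$ I would write
\[
\sum_{i\ne j}a_ia_j=\Big(\sum_i a_i\Big)^2-\sum_i a_i^2 .
\]
Bernstein gives $(\sum_ia_i)^2\lesssim mt/R$ under $m\gtrsim Rt$. Applying Bernstein to the centered $a_i^2$ shows that $\sum_i a_i^2-m\mathbb{E}a_1^2$ is $O(\sqrt{mt/R}+t)$. The remaining mean part $m\mathbb{E}a_1^2\asymp m/R$ must be handled carefully, because it is not small after dividing by $m(m-1)$: it equals only $O(1/(mR))$, which is fine. In total $|Q_m^{\rm sc}|\lesssim t/(mR)$.

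\emph{Gaussian chaos term (the main obstacle).} I expect $Q_m^{\rm G}$ to be the hardest part. Conditional on $(\varepsilon_i,Z_i,T_i)_{i}$, the stacked vector $(\bG_1,\dots,\bG_m)$ is standard Gaussian, and
\[
Q_m^{\rm G}=\bG^\top\big(\bA\otimes\bI_{d-2}\big)\bG,\qquad \bA=\frac{\bvarepsilon\bvarepsilon^\top-\mathrm{diag}(\varepsilon_i^2)}{m(m-1)} ,
\]
where $\bvarepsilon=(\varepsilon_1,\dots,\varepsilon_m)^\top$. This form has zero diagonal, hence mean zero. Hanson--Wright gives, with probability $1-2e^{-t}$,
\[
|Q_m^{\rm G}|\lesssim\sqrt{t}\,\|\bA\otimes\bI\|_F+t\,\|\bA\otimes\bI\|_{\rm op}\lesssim \frac{\sqrt{dt}\sum_i\varepsilon_i^2+t\sum_i\varepsilon_i^2}{m^2}.
\]
It then remains to show $\sum_i\varepsilon_i^2\lesssim m/R$ with probability $1-e^{-t}$. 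This follows from Bernstein, since $\mathbb{E}\varepsilon_i^{2p}\le\mathbb{E}|\varepsilon_i|^2\lesssim 1/R$ (the residual is bounded by $1$, and its second moment is $\lesssim1/R$ when $\hat r\asymp R$ and $\beta R\le c$), together with $m\gtrsim Rt$. The delicate points are to make the conditioning legitimate, namely to verify the independence of $\bG$ from $(\varepsilon,T)$, and to check that the moment estimate for $\varepsilon_i$ holds uniformly over the allowed range of $\hat r$ and $\beta$. Substituting gives $|Q_m^{\rm G}|\lesssim(\sqrt{dt}+t)/(mR)$. A union bound over the four events then yields \eqref{eq:direct-U-bound} with probability at least $1-Ce^{-ct}$.
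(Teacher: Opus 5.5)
Your proposal is correct and follows the paper's proof essentially step for step. It uses the same decomposition $L_m+Q_m^{\rm sc}+Q_m^{\rm G}$ in the frame $(\breve{\bv},\bu,\bP_\perp)$, Bernstein for $L_m$, and Bernstein for $Q_m^{\rm sc}$ via $(\sum_i a_i)^2-\sum_i a_i^2$. It handles $Q_m^{\rm G}$ by a conditional Hanson--Wright bound combined with $\sum_i\varepsilon_i^2\lesssim m/R$.

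One precision is needed: for Bernstein on the centered $a_i^2$, the bound $\mathbb{E}|\varepsilon_i|^p|T_i|^p\le C^pp!/R$ that you state only gives $(2k)!$ growth for $\mathbb{E}|a_i|^{2k}$, which is too weak. You need the sharper even-moment bound $\mathbb{E}|a_i|^{2k}\le C^kk!/R$. It follows from $\mathbb{E}[|T|^p\varepsilon^2]\le C(C\sqrt{p+1})^p/R$ (Gaussian rather than factorial growth in $T$) and is exactly \eqref{eq:centered-transverse-even}, which the paper uses at this point.
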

\begin{proof}
For notational simplicity, relabel the indices in $\calI_3$ as
$i=1,\ldots,m$ (we let $m=m_3$). In view of Lemma \ref{lem:EwEB}, let $\bm{\mu}:=\mathbb{E}\bw_i=q(R)\bP^\perp_{\breve{\bv}}\bv^*$. Expanding \eqref{hatEconstruct} with
$\bw_i=\bm{\mu}+(\bw_i-\bm{\mu})$ gives 
\begin{equation}
  \hat B-q(R)^2\beta^2=\hat B-\norm{\bm{\mu}}_2^2
  =L_m+Q_m,
  \label{eq:Hoeffding}
\end{equation}
where
\begin{align}
  L_m
  &:=\frac2m\sum_{i=1}^m
  \langle\bm{\mu},\bw_i-\bm{\mu}\rangle,
  \label{eq:L-def}
  \\
  Q_m
  &:=\frac{1}{m(m-1)}
  \sum_{i\ne j}
  \langle\bw_i-\bm{\mu},\bw_j-\bm{\mu}\rangle.
  \label{eq:Q-def}
\end{align}

We pause to introduce some notations. If $\bP^\perp_{\breve{\bv}}\bv^*\ne 0$, then we let $\bu=\frac{\bP^\perp_{\breve{\bv}}\bv^*}{\|\bP^\perp_{\breve{\bv}}\bv^*\|_2}$; otherwise, we choose $\bu$ as any vector orthogonal to $\breve{\bv}$. We then construct an orthogonal matrix $\bP\in \mathbb{R}^{d\times d}$ with the first two columns being $\breve{\bv}$ and $\bu$, denoted by 
\(\bP=[\breve{\bv},\bu,\bP_{\perp}]\)
for some $\bP_{\perp}\in \mathbb{R}^{d\times (d-2)}$. We then introduce 
\begin{align}
    \bg_i=\bP^\top\bx_i = [\bx_i^\top\breve{\bv},\,\bx_i^\top\bu,\,\bx_i^\top\bP_\perp]^\top:=[Z_i,T_i,\tilde{\bg}_i^\top]^\top.
\end{align}
By rotational invariance, $\bg_i\sim N(0,\bI_d)$, and hence $Z_i\sim N(0,1)$, $T_i\sim N(0,1)$ and $\tilde{\bg}_i\sim N(0,\bI_{d-2})$ are independent. By the construction of $\bu$, we have
\(\bv^*= \langle \bv^*,\breve{\bv}\rangle\breve{\bv}+ \bP_{\breve{\bv}}^\perp \bv^* =\alpha\breve{\bv} +\beta\bu.\)
Together with $\btheta^*=R\bv^*$ and the above notations,  this gives
\(
    \bx_i^\top \btheta^* = R\bx_i^\top\bv^* = R\big(\alpha \bx_i^\top\breve{\bv}+\beta \bx_i^\top\bu\big) = R(\alpha Z_i+\beta T_i).\)
Therefore, conditioning on $\bx_i$ and $\breve{\bv}$, we have
\(
    y_i\sim \Bern\big(s(R[\alpha Z_i+\beta T_i])\big).\)
Substituting these into (\ref{uiconstruct}), 
\[
    \bw_i =\big[y_i-s(\hat{r}Z_i)\big]\bP^\perp_{\breve{\bv}}\bx_i =\big[y_i-s(\hat{r}Z_i)\big]\bP^\perp_{\breve{\bv}}\bP\bg_i =  \big[y_i-s(\hat{r}Z_i)\big] \big(T_i\bu+\bP_\perp\tilde{\bg}_i\big).
\]
For convenience, we further introduce $e_i=y_i-s(\hat{r}Z_i)$  so that 
\(
    \bm{\mu}=q(R)\bP^\perp_{\breve{\bv}}\bv^* = q(R)\beta \bu,\)
then we can write $\bw_i= e_i(T_i\bu+\bP_\perp\tilde{\bg}_i)$ and 
\[
    \bw_i-\bm{\mu} = \underbrace{(e_iT_i-q(R)\beta)}_{:=\eta_i}\bu + e_i \bP_\perp\tilde{\bg}_i. 
\]
We now substitute these expressions into (\ref{eq:L-def})--(\ref{eq:Q-def}). By $\bu^\top\bP_\perp = 0$, 
\begin{align}
    L_m = \frac{2}{m}\sum_{i=1}^m\langle q(R)\beta\bu,\eta_i \bu + e_i\bP_\perp \tilde{\bg}_i\rangle = \frac{2q(R)\beta}{m} \sum_{i=1}^m \eta_i . \label{eq:L-exact}
\end{align}
Also, 
\begin{align}
     Q_m
  &=\frac{1}{m(m-1)}\sum_{i\ne j}\big\langle \eta_i\bu+e_i\bP_\perp \tilde{\bg}_i,\eta_j\bu+e_j\bP_\perp \tilde{\bg}_j\big\rangle\\ \label{eq:Q-split}
  &=\frac{1}{m(m-1)}\sum_{i\ne j}\eta_i\eta_j+ \frac{1}{m(m-1)}
  \sum_{i\ne j}e_ie_j\tilde{\bg}_i^\top\tilde{\bg}_j:=Q_m^{\mathrm{sc}}+Q_m^{\mathrm G},
\end{align}
We control the three terms $L_m,\,Q_m^{\mathrm{sc}},\,Q_m^{\mathrm G}$ separately.

\paragraph{Bounding $L_m$ and $Q_m^{\rm sc}$.} Built upon certain moment bounds in Appendix \ref{app:integral}, these are routine applications of   Bernstein's inequality. We relegate the details to Appendix \ref{app:bernsteinminimax} and simply announce the final bounds here:  with probability at least $1-Ce^{-ct}$, 
\begin{gather}
    |L_m|\lesssim\frac{\sqrt{dt}+t}{mR}, 
  \label{eq:L-final}\\
   |Q_m^{\mathrm{sc}}|
  \lesssim\frac{t}{mR}.
  \label{eq:Qsc-final}
\end{gather}

\paragraph{Bounding $Q_m^{\rm G}$.} It remains to bound $Q_m^{\rm G} = \frac{1}{m(m-1)}\sum_{i\ne j}e_ie_j\tilde{\bg}_i^\top\tilde{\bg}_j$. Conditioning on $\breve{\bv},\hat{r}$ and further on $\sigma\{Z_i,T_i,y_i\}_{i=1}^m$, 
 the  weights $ \be:=(e_1,\ldots,e_m)^\top$ are
fixed, whereas $\tilde{\bg}_1,\ldots,\tilde{\bg}_m$ remain i.i.d.
$N(0,\bI_{d-2})$ random vectors. Define the symmetric matrix
\(
  \bA_e:=\be\be^\top-\diag(e_1^2,\ldots,e_m^2) = \big(e_ie_j\mathbf{1}\{i\ne j\}\big)_{i,j}\in\mathbb{R}^{m\times m}. 
\)
For each coordinate $\ell=1,\ldots,d-2$, define
\(
  \bw^{(\ell)}
  :=(\tilde{g}_{1\ell},\ldots,\tilde{g}_{m\ell})^\top\in\mathbb{R}^m,\)
then
\[
  \sum_{i\ne j}e_ie_j\tilde{\bg}_i^\top\tilde{\bg}_j
  = \sum_{i\ne j}e_ie_j\bigg(\sum_{\ell=1}^{d-2}\tilde{g}_{i\ell}\tilde{g}_{j\ell}\bigg)=\sum_{\ell=1}^{d-2}
   (\bw^{(\ell)})^\top \bA_e\bw^{(\ell)}. 
\]
Stack the vectors $\bw^{(\ell)}$ into
$\bz\in\mathbb{R}^{m(d-2)}$.  By 
$\bz\sim N(0,\bI_{m(d-2)})$, and
\begin{equation}
  \sum_{i\ne j}e_ie_j\tilde{\bg}_i^\top\tilde{\bg}_j
  =\bz^\top \bB_e\bz,
  \quad
  \bB_e:=\bI_{d-2}\otimes \bA_e,
  \label{eq:QG-quadratic}
\end{equation}
where $\otimes$ denotes the Kronecker product; we use
$\|\cdot\|_{\rm op}$ and $\|\cdot\|_{\rm F}$ for the operator and
Frobenius norms, respectively. 
The diagonal of $\bA_e$ is zero, so
\(\tr(\bB_e)=\tr(\bI_{d-2})\tr(\bA_e)=0,\)
hence the quadratic form in \eqref{eq:QG-quadratic} is already
conditionally centered. Put
\(S_e:=\sum_{i=1}^m e_i^2.\) The residual-energy bound in Lemma \ref{lem:1dprobound} gives
$\mathbb{E}[e_i^2]\le C/R$. Moreover, $0\le e_i^2\le1$. For
$X_i:=e_i^2-\mathbb{E}[e_i^2]$, we have, for every integer $p\ge2$,
\[
  \mathbb{E}|X_i|^p
  \le\mathbb{E}X_i^2
  \le\mathbb{E}e_i^4
  \le\mathbb{E}e_i^2
  \le\frac CR.\]
Therefore, Lemma~\ref{lem:bernstein210} gives
\begin{equation}
  S_e = \sum_{i=1}^mX_i + m\mathbb{E}[e_i^2] 
  \lesssim\bigg\{\frac mR+\sqrt{\frac{mt}{R}}+t\bigg\}
  \lesssim \frac mR
  \label{eq:Se-bound}
\end{equation}
holds with probability at least  $1-2e^{-t}$, where
the last inequality uses $m\ge CRt$. On the event \eqref{eq:Se-bound}, we have
\(
  \norm{\bA_e}_{\mathrm F}^2
   =\sum_{i\ne j}e_i^2e_j^2
  \le S_e^2\lesssim \frac{m^2}{R^2}\), which implies \(\|\bA_{e}\|_{\rm op}\le\norm{\bA_e}_{\mathrm F} \lesssim\frac{m}{R}.\)
 In light of $\bB_e:=\bI_{d-2}\otimes \bA_e$, we have
\(
  \norm{\bB_e}_{\mathrm F}
 =\sqrt{d-2}\,\norm{\bA_e}_{\mathrm F}
  \lesssim\frac{m\sqrt d}{R}\)
  and \(
  \norm{\bB_e}_{\rm op}
  =\norm{\bA_e}_{\rm op}
  \lesssim\frac mR.\)  
By Lemma~\ref{lem:hanson-wright}, 
it holds with probability at least $1- 2e^{-t}$ that 
\(|\bz^\top \bB_e\bz|
  \lesssim \frac mR\{\sqrt{dt}+t\}\), 
which then gives 
\begin{equation}
  |Q_m^{\mathrm G}|= \bigg|\frac{\bz^\top \bB_e\bz}{m(m-1)}\bigg|
  \lesssim \frac{\sqrt{dt}+t}{mR}.
  \label{eq:QG-final}
\end{equation}
Finally, combining \eqref{eq:Hoeffding}, \eqref{eq:L-final},
\eqref{eq:Qsc-final}, and \eqref{eq:QG-final}, along with a union bound, completes the proof.
\end{proof}

\subsection{Completing the Proof}
We are now ready to establish Theorem \ref{thm:minimaxupper}.

\begin{proof}[Proof of Theorem \ref{thm:minimaxupper}]
Since $\min\{m_1,m_2,m_3\}\ge cn$, we have $m_1,m_2,m_3\asymp n$ and express the subsequent bounds in terms of $n$. In Step 1 of Algorithm \ref{alg:debiased-norm-estimate}, Lemma \ref{lem:chardonbound} gives
\begin{align}
    \|\breve{\bv}-\bv^*\|_2
    \lesssim
    \sqrt{\frac{d+t}{nR}},
    \qquad
    |\breve R-R|
    \lesssim
    \sqrt{\frac{R^3(d+t)}{n}}.
    \label{eq:stage-one-localization}
\end{align}
Let $\alpha:=\langle\breve{\bv},\bv^*\rangle$ and $\beta^2:=1-\alpha^2$. Under $n\ge CR(d+t)$, (\ref{eq:stage-one-localization}) implies
\(
    \breve R\in[R/2,2R],
    ~
    \alpha\ge0,
    ~
    \beta^2
    \lesssim
    \frac{d+t}{nR}
    \le
    \frac{c}{R^2}.
\)
Therefore, Lemmas \ref{lem:profile-local} and \ref{lem:direct-U} apply to Steps 2--3 and yield, on an event of probability at least $1-Ce^{-ct}$,
\begin{align}
    |\hat r-r^\circ|
    &\lesssim
    \sqrt{\frac{R^3t}{n}},
    \qquad
    \big|\hat B-\beta^2q(R)^2\big|
    \lesssim
    \frac{\sqrt{dt}+t}{nR}.
    \label{eq:two-stage-errors}
\end{align}
Since $r^\circ\in[3R/4,R]$, the first bound and $n\gtrsim Rt$ imply $\hat r\in[R/2,3R/2]$. Moreover, the boundedness of $q$ and Lemma \ref{lem:qprime} give
\(
    |q(\hat r)^2-q(r^\circ)^2|
    \lesssim
    \sqrt{\frac{t}{nR^3}}.
\) 
Using $q(r^\circ)=\alpha q(R)$ and $\alpha^2+\beta^2=1$, we obtain
\[
    |\hat S-q(R)^2|
      \le
    |q(\hat r)^2-q(r^\circ)^2|
    +
    |\hat B-\beta^2q(R)^2|  \lesssim
    \sqrt{\frac{t}{nR^3}}
    +
    \frac{\sqrt{dt}+t}{nR}.\]

Since $R\ge1$, we have $q(R)\ge q(1)>0$. Also, $\hat r\in[R/2,3R/2]$ implies $q(R)\in[q(\hat r/2),q(2\hat r)]$. Hence, by the projection in (\ref{hatqdefined}) and the inequality
\(
|\sqrt{[x]_+}-a|\le |x-a^2|/a
\)
for $a>0$,
\[
    |\hat q-q(R)|
    \lesssim
    \sqrt{\frac{t}{nR^3}}
    +
    \frac{\sqrt{dt}+t}{nR}.
\]
Both $R$ and $\hat R_{\rm DB}=q^{-1}(\hat q)$ belong to $[\hat r/2,2\hat r]\subset[R/4,3R]$. The mean value theorem and Lemma \ref{lem:qprime} therefore yield
\[
    |\hat R_{\rm DB}-R|
    \lesssim
    R^3|\hat q-q(R)|  \lesssim
    \sqrt{\frac{R^3t}{n}}
    +
    \frac{R^2(\sqrt{dt}+t)}{n}
    \lesssim
    \sqrt{\frac{R^3t}{n}},\]
where the last inequality follows from $n\gtrsim R(d+t)$. This proves (\ref{thm3:norm}).
\end{proof}

\section{Concluding Remarks}\label{sec:conclude}
In this paper, we study the finite-sample parameter estimation problem in logistic regression with Gaussian covariates. While this appears to be a basic enough model in statistics and machine learning, before this work, the minimax optimal estimation error rate has not been characterized. Also, the estimation performance of the MLE, one of the most canonical estimators, is not fully understood. We study the fundamental limit and develop optimal estimator for norm estimation. Combining with prior results on direction estimation, we show that the minimax optimal error rate is at the order $\Theta(\sqrt{Rd/n}+\sqrt{R^3/n})$, and this can be attained by an efficient algorithm. We also improve the best known estimation error rate of the MLE, $O(\sqrt{R^3d/n})$, to $\tilde{O}(\sqrt{Rd/n}+\sqrt{R^3/n}+R^2d/n)$, and conjecture that the extra term $R^2d/n$, while statistically suboptimal, may be intrinsic to the MLE. There remain some interesting problems for future research. First, as emphasized in Remark \ref{rem:mlenormopen}, it remains open whether $R^2d/n$ is a finite-sample lower bound on $\hat{R}_{\mle}-R$. If proven, this would lead to an interesting finding that the MLE is suboptimal in parameter estimation under $n\lesssim Rd\min\{d,R^2\}$. Moreover, 
it is also interesting to investigate to what extent our results extend to non-Gaussian designs. 
In fact, it was shown  that the upper bound of \cite{chardon2024finite} remains valid, up to some logarithmic factors, over a class of regular designs; see \cite[Theorem 3]{chardon2024finite}. We expect that our results also extend to a larger class of designs, while it remains rather subtle to identify these designs and extend some of our current arguments.

\subsection*{Acknowledgment}
Part of the work was done while JC was visiting Hal{\i}c{\i}o\u{g}lu Data Science Institute at UC San Diego in the summer of 2026, supported by NSF 2217058 and 2112665. 

\bibliography{libr}
\bibliographystyle{plain}

\appendix

\section{Gaussian Moment Bounds}\label{app:integral}
In this appendix, we collect the Gaussian moment bounds used in the proofs of Theorems \ref{thm:upper} and \ref{thm:minimaxupper}. 
We first note two elementary estimates. For $g\sim N(0,1)$, $R\ge1$, and every integer $p\ge0$,
\begin{align}
    \mathbb{E}|g|^p s'(Rg)
    &\le \frac{1}{\sqrt{2\pi}R^{p+1}}
    \int_{-\infty}^{\infty}|u|^ps'(u)\,du
    \le \frac{2p!}{\sqrt{2\pi}R^{p+1}},
    \label{T4bound1}\\
    \mathbb{E}|g|^p
    &\le (C\sqrt{p+1})^p.
    \label{eq:gaussian-moment-basic}
\end{align}
Here, we used $s'(u)\le e^{-|u|}$ and
$\int_{-\infty}^{\infty}|u|^pe^{-|u|}\,du=2p!$.

\begin{lem}[Moments under a nearby one-dimensional logistic model]
\label{lem:1dprobound}
Let $R\ge1$, and let $Z,T$ be independent $N(0,1)$ variables. Suppose
\(
    \alpha^2+\beta^2=1,~ \alpha\ge0,~
    0\le\beta\le \frac{c_0}{R}
\)
for a sufficiently small universal constant $c_0>0$. Conditioning on $(Z,T)$, let
\(
    Y\sim\Bern\big(s(R[\alpha Z+\beta T])\big).\) 
For $r\in[R/4,2R]$, define $\varepsilon_r=Y-s(rZ)$. Then, for every integer $p\ge0$,
\begin{align}
    \mathbb{E}\big[|Z|^p\varepsilon_r^2\big]
    &\le \frac{C^{p+1}p!}{R^{p+1}},
    \label{eq:nearby-radial-moment}\\
    \mathbb{E}\big[|T|^p\varepsilon_r^2\big]
    &\le \frac{C(C\sqrt{p+1})^p}{R}.
    \label{eq:nearby-transverse-moment}
\end{align}
Consequently,
\begin{equation}
    \mathbb{E}\varepsilon_r^2\le \frac CR,
    \label{eq:nearby-residual-energy}
\end{equation}
and, for every integer $p\ge2$,
\begin{align}
    \mathbb{E}\big|Z\varepsilon_r-\mathbb{E}(Z\varepsilon_r)\big|^p
    &\le \frac{C^pp!}{R^{p+1}},
    \label{eq:centered-radial-moment}\\
    \mathbb{E}\big|T\varepsilon_r-\mathbb{E}(T\varepsilon_r)\big|^p
    &\le \frac{C^pp!}{R}.
    \label{eq:centered-transverse-moment}
\end{align}
Moreover, for every integer $k\ge1$,
\begin{equation}
    \mathbb{E}\big|T\varepsilon_r-\mathbb{E}(T\varepsilon_r)\big|^{2k}
    \le \frac{C^kk!}{R}.
    \label{eq:centered-transverse-even}
\end{equation} 
\end{lem}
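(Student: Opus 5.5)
The plan is to reduce everything to the conditional mean $\mathbb{E}[\varepsilon_r^2\mid Z,T]$ and then do one-dimensional integrals against $s'$. Since $Y$ is Bernoulli with mean $s(R[\alpha Z+\beta T])$, we have
\[
\mathbb{E}[\varepsilon_r^2\mid Z,T]=s(R\alpha Z+R\beta T)\big(1-s(R\alpha Z+R\beta T)\big)+\big(s(R\alpha Z+R\beta T)-s(rZ)\big)^2 .
\]
The first term equals $s'(R\alpha Z+R\beta T)\le e^{-|R\alpha Z+R\beta T|}$.

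For the second term, use the elementary bound $|s(a)-s(b)|\le \min\{1,|a-b|\}\,(e^{-|a|}+e^{-|b|})$ when $a,b$ have the same sign, and $|s(a)-s(b)|\le 1$ otherwise. The sign-disagreement event requires $|Z|\lesssim \beta|T|/\alpha$. On the agreement event, $|s(a)-s(b)|\le 2\max\{s'(a/2),s'(b/2)\}\cdot$const suffices.

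A cleaner unified bound is
\[
|s(a)-s(b)|\le C\big(e^{-|a|/2}+e^{-|b|/2}\big)\quad\text{if }\operatorname{sign}(a)=\operatorname{sign}(b),
\]
which follows since both $s(a),s(b)$ are within $e^{-|\cdot|}$ of the same limit $0$ or $1$. Hence, in all cases,
\[
\mathbb{E}[\varepsilon_r^2\mid Z,T]\le C\big(e^{-|R\alpha Z+R\beta T|/2}+e^{-r|Z|/2}\big)+\mathbf 1\{|Z|\le \beta|T|/\alpha\}.
\]

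Now bound each piece against $|Z|^p$ or $|T|^p$, conditioning on $T$ and integrating over $Z$ first.

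\textbf{The $e^{-r|Z|/2}$ term.} Since $r\ge R/4$, the computation behind \eqref{T4bound1} gives $\mathbb{E}|Z|^pe^{-r|Z|/2}\le C^{p+1}p!/R^{p+1}$. Multiplying by $|T|^p$ and using independence gives the transverse bound $(C\sqrt{p+1})^p\cdot C/R$ via \eqref{eq:gaussian-moment-basic}.

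\textbf{The shifted term $e^{-R\alpha|Z+\beta T/\alpha|/2}$.} With $\alpha\ge 1/2$, substitute $u=Z+\beta T/\alpha$. The Gaussian density of $Z$ is bounded, so the integral over $u$ of $|u-\beta T/\alpha|^pe^{-R\alpha|u|/2}$ is at most $C^p\big(p!/R^{p+1}+(\beta|T|)^p/R\big)$. Since $\beta\le c_0/R$, the term $(\beta|T|)^p/R\le c_0^p|T|^p/R^{p+1}$. Its $T$-expectation is $(C\sqrt{p})^p/R^{p+1}\le C^pp!/R^{p+1}$, which gives \eqref{eq:nearby-radial-moment}. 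For the transverse bound, the $Z$-integral is $\le C/R$, and multiplying by $|T|^p$ gives \eqref{eq:nearby-transverse-moment}.

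\textbf{The indicator term.} Conditioning on $T$, the $Z$-integral of $|Z|^p\mathbf 1\{|Z|\le 2\beta|T|\}$ is $\le (2\beta|T|)^{p+1}$. Taking the $T$-expectation gives $(C\beta)^{p+1}(\sqrt{p+1})^{p+1}\lesssim C^{p+1}p!/R^{p+1}$, using $\beta\le c_0/R$ and $(p+1)^{(p+1)/2}\le C^pp!$. The transverse analogue is $\mathbb{E}|T|^{p+1}\beta\lesssim(C\sqrt{p+1})^p/R$.

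This is the step I expect to be most delicate: the sign-change region is where the misspecification bites. The requirement $\beta\lesssim 1/R$ is exactly what makes its contribution no larger than the intrinsic $1/R^{p+1}$ scale.

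\textbf{Consequences.} Setting $p=0$ gives \eqref{eq:nearby-residual-energy}. For \eqref{eq:centered-radial-moment}, use $|\varepsilon_r|\le1$, so $|Z\varepsilon_r|^p\le|Z|^p\varepsilon_r^2$. Together with $\mathbb{E}|X-\mathbb{E}X|^p\le 2^p\mathbb{E}|X|^p$ and \eqref{eq:nearby-radial-moment}, this gives $C^pp!/R^{p+1}$.

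Likewise, \eqref{eq:nearby-transverse-moment} gives $\mathbb{E}|T\varepsilon_r|^p\le C(C\sqrt{p+1})^p/R$. Applying $(\sqrt{p+1})^p\le C^pp!$ yields \eqref{eq:centered-transverse-moment}. For the even moments $2k$, the same bound reads $C(C\sqrt{2k+1})^{2k}/R\le C^kk!/R$ after using $(2k+1)^k\le C^kk!$, which gives \eqref{eq:centered-transverse-even}.
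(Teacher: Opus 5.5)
Your proof is correct and follows the paper's argument. Both use the conditional identity $\mathbb{E}[\varepsilon_r^2\mid Z,T]=\{s(RU)-s(rZ)\}^2+s'(RU)$, bound it by an exponential envelope plus the indicator of a slab $|Z|\lesssim\beta|T|$, integrate using the bounded density of $Z$ and the Gaussian moments of $T$, and derive the centered bounds from $|\varepsilon_r|\le 1$. The only difference is cosmetic: the paper splits on $\{|Z|>4\beta|T|\}$, where $|U|\ge|Z|/2$ lets it replace $e^{-R|U|}$ by $e^{-cR|Z|}$, whereas you split on sign agreement and handle the shifted exponential by the translation $u=Z+\beta T/\alpha$, which works equally well since $\beta\le c_0/R$.
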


\begin{proof}
Put $U=\alpha Z+\beta T$. Conditioning on $(Z,T)$, using the randomness of $Y\sim\Bern(s(RU))$ gives
\begin{align}
     \mathbb{E}\big[\varepsilon_r^2\mid Z,T\big]= \big\{s(RU)-s(rZ)\big\}^2 + \mathbb{E}\big\{Y-s(RU)\big\}^2
   =\big\{s(RU)-s(rZ)\big\}^2+s'(RU).\label{condisecond}
\end{align}
Consider the event
\(\mathcal E=\{|Z|>4\beta|T|\}.\)
Since $\beta\le c_0/R\le c_0$, a sufficiently small $c_0$ ensures $\alpha\ge3/4$. On  the event $\mathcal E=\{|Z|>4\beta |T|\}$, $U$ and $Z$ have the same sign and $|U|\ge |Z|/2$. Also, $r\in[R/4,2R]$. In light of the inequalities $s(-|u|)\le e^{-|u|}$ for $u\le 0$ and $ s'(u)\le e^{-|u|}$ for $u\in \mathbb{R}$,
on the event $\mathcal E$, we use $|U|\ge |Z|/2$ to obtain \[s'(RU)\mathbf{1}(\mathcal{E})\le e^{-R|U|}\le e^{-R|Z|/2},\] and also combine with $\sign(U)=\sign(Z)$ to obtain 
\begin{align*}
    |s(RU)-s(rZ)| \mathbf{1}(\mathcal{E}) \le |s(-|RU|)-s(-|rZ|)|\le |s(-|RU|)|+|s(-|rZ|)|\le Ce^{-cR|Z|}.  
\end{align*}
On the event $\mathcal E^c=\{|Z|\le 4\beta|T|\}$, we use the trivial bound by a universal constant. Therefore, continuing from (\ref{condisecond}), 
\begin{align}
    \mathbb{E}\big[\varepsilon_r^2\mid Z,T\big]\le Ce^{-cR|Z|}
    +C\mathbf{1}\{|Z|\le4\beta|T|\}.
    \label{eq:residual-envelope}
\end{align}
\paragraph{Proof of (\ref{eq:nearby-radial-moment}).} We first prove \eqref{eq:nearby-radial-moment}. In light of \eqref{eq:residual-envelope},
\begin{align*}
    \mathbb{E}\big[|Z|^p\varepsilon_r^2\big]
    &\le C\mathbb{E}\big[|Z|^pe^{-cR|Z|}\big]
    +C\mathbb{E}\big[|Z|^p\mathbf{1}\{|Z|\le4\beta|T|\}\big].
\end{align*}
A change of variable, along with $\int_0^\infty u^pe^{-u}\,du=p!$, gives 
\[
    \mathbb{E}\big[|Z|^pe^{-cR|Z|}\big]
    \le \frac{2}{\sqrt{2\pi}}\int_0^\infty z^pe^{-cRz}\,dz
    =\frac{2}{\sqrt{2\pi}(cR)^{p+1}}\int_0^\infty u^pe^{-u}\,du
    \le \frac{C^{p+1}p!}{R^{p+1}}.
\]
For the second term, conditioning on $T$ and using that the density of $Z$ is uniformly bounded,
\begin{align*}
    \mathbb{E}\big[|Z|^p\mathbf{1}\{|Z|\le4\beta|T|\}\big]
    &\le C^{p+1}\beta^{p+1}\mathbb{E}|T|^{p+1}
    \le \frac{C^{p+1}p!}{R^{p+1}},
\end{align*}
where the last inequality follows from \eqref{eq:gaussian-moment-basic} and $\beta\le c_0/R$. This proves \eqref{eq:nearby-radial-moment}.

\paragraph{Proof of (\ref{eq:nearby-transverse-moment})--(\ref{eq:nearby-residual-energy}).}
Similarly, by independence of $Z$ and $T$,
\begin{align*}
    \mathbb{E}\big[|T|^p\varepsilon_r^2\big]
    &\le C\mathbb{E}|T|^p\,\mathbb{E}e^{-cR|Z|}
    +C\mathbb{E}\big[|T|^p\mathbf{1}\{|Z|\le4\beta|T|\}\big]\\
    &\le \frac{C\mathbb{E}|T|^p}{R}
    +C\beta\mathbb{E}|T|^{p+1}
    \le \frac{C(C\sqrt{p+1})^p}{R},
\end{align*}
where in the last inequality we use \eqref{eq:gaussian-moment-basic}, $\beta\le c_0/R$, and absorb the additional factor $\sqrt{p+1}$ into $C^p$.
This proves \eqref{eq:nearby-transverse-moment}; taking $p=0$ gives \eqref{eq:nearby-residual-energy}.

\paragraph{Proof of (\ref{eq:centered-radial-moment})--(\ref{eq:centered-transverse-even}).} It remains to establish the centered bounds. For any random variable $X$ and integer $p\ge1$, Jensen's inequality gives
\(
    \mathbb{E}|X-\mathbb{E}X|^p\le 2^p\mathbb{E}|X|^p.\)
Since $|\varepsilon_r|\le1$, for $p\ge2$ we have
\[|Z\varepsilon_r|^p\le |Z|^p\varepsilon_r^2,\qquad |T\varepsilon_r|^p\le |T|^p\varepsilon_r^2.\] Combining these facts with \eqref{eq:nearby-radial-moment}--\eqref{eq:nearby-transverse-moment}, together with $(C\sqrt{p+1})^p\le C_1^pp!$, yields \eqref{eq:centered-radial-moment}--\eqref{eq:centered-transverse-moment}. Taking $p=2k$ in \eqref{eq:nearby-transverse-moment}, and using $(C\sqrt{2k+1})^{2k}\le C_2^kk!$, gives \eqref{eq:centered-transverse-even}. 
\end{proof}

\begin{lem}[Moments of localized directional increments]
\label{lem:directincrebound}
Let $r\in[R/2,2R]$ and $\bv,\bv^*\in\mathbb{S}^{d-1}$ be fixed. Define
\[
    \Delta_i(r,\bv)=\psi_i(r,\bv)-\psi_i(r,\bv^*),
    \qquad
    \psi_i(r,\bv)=\{s(r\bx_i^\top\bv)-y_i\}\bx_i^\top\bv,
\]
where $\bx_i\sim N(\bm{0},\bI_d)$,
$y_i\mid\bx_i\sim\Bern(s(R\bx_i^\top\bv^*))$, and
$\eta=\|\bv-\bv^*\|_2$. There exists a universal constant $C>0$ such that, if $R\eta\le c_0$ for a sufficiently small universal constant $c_0>0$, then for every integer $p\ge2$,
\[
    \mathbb{E}|\Delta_i(r,\bv)|^p
    \le C^pp!\frac{\eta^p}{R}.
\]
\end{lem}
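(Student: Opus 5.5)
We reduce to two Gaussian coordinates and split $\Delta_i$ into a part from changing the residual and a part from changing the projection. Write $\bv=a\bv^*+b\bu$ with $\bu\perp\bv^*$ a unit vector, $a=\langle\bv,\bv^*\rangle$, and $b=\sqrt{1-a^2}$. Since $\eta^2=2(1-a)$, we have $b\le\eta$ and $1-a\le\eta^2/2$. Let $Z=\bx_i^\top\bv^*$ and $T=\bx_i^\top\bu$; these are independent $N(0,1)$. Then $W:=\bx_i^\top\bv=aZ+bT$ and $y_i\mid(Z,T)\sim\Bern(s(RZ))$. Set $\varepsilon(\cdot)=s(r\,\cdot)-y_i$, so that
\[
\Delta_i=\varepsilon(W)W-\varepsilon(Z)Z=\varepsilon(W)(W-Z)+\{s(rW)-s(rZ)\}Z=:D_1+D_2.
\]
Using $(x+y)^p\le 2^{p-1}(x^p+y^p)$, it suffices to show $\mathbb{E}|D_k|^p\le C^pp!\eta^p/R$ for $k=1,2$.

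\textbf{Term $D_1$.} We have $|W-Z|\le(1-a)|Z|+b|T|\le\eta(|Z|+|T|)$, using $1-a\le\eta$. Since $|\varepsilon|\le1$,
\[
|D_1|^p\le\eta^p(|Z|+|T|)^p\varepsilon(W)^2 .
\]
We then need $\mathbb{E}[(|Z|+|T|)^p\varepsilon(W)^2]\le C^pp!/R$. This is exactly the setting of Lemma~\ref{lem:1dprobound} with the roles relabeled. The response is generated by $Z=\alpha W+\beta T'$, where $\alpha=a$, $\beta=b$, and $T'=-bZ'+\dots$ is the coordinate orthogonal to $W$ in $\mathrm{span}(Z,T)$. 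The residual is $\varepsilon_r=Y-s(rW)$, and we have $\beta=b\le\eta\le c_0/R$. Moreover $|Z|+|T|\le 2(|W|+|T'|)$. Hence \eqref{eq:nearby-radial-moment}--\eqref{eq:nearby-transverse-moment} apply and give the bound $C^{p+1}p!/R^{p+1}+C(C\sqrt{p+1})^p/R\le C^pp!/R$. The only care needed here is the sign convention for the orthogonal coordinate, which is harmless by symmetry.

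\textbf{Term $D_2$.} We have $|s(rW)-s(rZ)|\le 1$, and by the mean value theorem it also equals $r\,s'(r\xi)|W-Z|$ for some $\xi$ between $W$ and $Z$. We use the event $\mathcal E=\{|Z|>4\eta|T|\}$.

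On $\mathcal E$, $\xi$ has the sign of $Z$ and $|\xi|\ge|Z|/2$, because $|W-Z|\le\eta|Z|+\eta|T|\le|Z|/2$ for $\eta$ small. Therefore
\[
|D_2|\le 2R\,e^{-R|Z|/4}\,\eta(|Z|+|T|)\,|Z| .
\]
Taking the $p$-th moment and substituting $u=R|Z|$ (the $|T|^p$ factor is independent and contributes at most $C^p p!$), we get a bound of order
\[
(2R\eta)^p\,\mathbb{E}\big[|Z|^p(|Z|+|T|)^pe^{-pR|Z|/4}\big]\lesssim (R\eta)^p\,C^pp!\,R^{-p-1}=C^pp!\,\eta^p/R .
\]

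On $\mathcal E^c$, we use the Lipschitz bound $|s(rW)-s(rZ)|\le r|W-Z|/4$ together with the trivial bound $1$. Hence $|D_2|\le\min\{1,R\eta(|Z|+|T|)\}|Z|$, while $|Z|\le4\eta|T|$. This gives $|D_2|^p\le(4\eta|T|)^p$. The probability $\mathbb{P}(|Z|\le4\eta|T|\mid T)\lesssim\eta|T|$ contributes an extra factor $\eta\le c_0/R$. So
\[
\mathbb{E}\big[|D_2|^p\mathbf 1_{\mathcal E^c}\big]\le C^p\eta^{p+1}\mathbb{E}|T|^{p+1}\le C^pp!\,\eta^p/R .
\]

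\textbf{Main obstacle.} The delicate step is the $\mathcal E$-part of $D_2$. Naively, the Lipschitz constant of $s(r\cdot)$ is of order $R$, which costs a factor $R^p$. This loss must be offset by the decay of $s'(rZ)$, which restricts $|Z|\lesssim1/R$ and yields $R^{-p-1}$ from the $|Z|^p$ weight. Keeping the two sources of $|Z|$ paired correctly is what makes the final bound $\eta^p/R$ rather than $\eta^p$.
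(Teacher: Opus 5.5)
Your proof is correct, and it takes a somewhat different route from the paper's. The paper uses the fundamental theorem of calculus along the non-spherical segment $\bv_\tau=\bv^*+\tau(\bv-\bv^*)$. This gives $\Delta_i(r,\bv)=\eta W\int_0^1 A_\tau\,d\tau$ with $A_\tau=s(rZ_\tau)-y_i+rZ_\tau s'(rZ_\tau)$, and the bound $|A_\tau|\le C$ reduces everything to $\mathbb{E}[|W|^pA_\tau^2]$. For every $\tau$, the paper then renormalizes $\bv_\tau$ to a unit vector $\bar{\bv}_\tau$, rotates coordinates, and applies Lemma~\ref{lem:1dprobound} to the residual part. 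The slope part $(rZ_\tau)^2s'(rZ_\tau)^2$ depends on a single Gaussian coordinate, so its moment follows from a direct independence computation with no case split.

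Your discrete product-rule split $\Delta_i=\varepsilon(W)(W-Z)+\{s(rW)-s(rZ)\}Z$ removes the path entirely. It also removes the $\tau$-uniform bookkeeping: there are no $\sigma_\tau,\alpha_\tau,\beta_\tau,m_\tau,n_\tau$ to track, and no need to check $r\sigma_\tau\in[R/4,2R]$ for all $\tau$. You invoke Lemma~\ref{lem:1dprobound} only once, at the endpoint. Concretely, with $T':=bZ-aT$ you have $Z=aW+bT'$, $a\ge0$, $0\le b\le\eta\le c_0/R$, and $|Z|+|T|\le\sqrt2(|W|+|T'|)$. This settles the sign issue you flagged.

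The price is in your second term, which compares the sigmoid at two different points. The mean-value point $\xi$ must therefore be localized via the event $\{|Z|>4\eta|T|\}$, which essentially re-runs the envelope argument from the proof of Lemma~\ref{lem:1dprobound}. You also need a separate small-probability estimate on the complement, and you supply it correctly, using $|D_2|\le|Z|\le4\eta|T|$ and $\mathbb{P}(|Z|\le4\eta|T|\mid T)\lesssim\eta|T|$.

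One detail in that step is genuinely necessary, and you handled it correctly: after raising to the $p$-th power you must keep the full factor $e^{-pR|Z|/4}$. With only $e^{-R|Z|/4}$, the $|Z|^{2p}$ contribution would produce $(2p)!\,\eta^p/R^{p+1}$ rather than $C^pp!\,\eta^p/R$, and that bound is not uniform in $p$ when $R$ is of order one.
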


\begin{proof}
If $\eta=0$, then $\Delta_i(r,\bv)=0$ and we are done. We thus consider $\eta>0$ and write
\[
    \ba=\frac{\bv-\bv^*}{\eta},
    \qquad
    \bv_\tau=\bv^*+\tau\eta\ba,
    \qquad
    Z_\tau=\bx_i^\top\bv_\tau,
    \qquad
    W=\bx_i^\top\ba,
    \qquad \tau\in[0,1].
\]
For fixed $y\in\{0,1\}$, define
$f_y(z)=z\{s(rz)-y\}$. Then
$f_y'(z)=s(rz)-y+rzs'(rz)$, and the fundamental theorem of calculus gives
\begin{align}
    \Delta_i(r,\bv)
    &=\eta W\int_0^1 A_\tau\,d\tau,
    \qquad
    A_\tau=s(rZ_\tau)-y_i+rZ_\tau s'(rZ_\tau).
    \label{eq:direction-increment-ftc}
\end{align}
Since $|A_\tau|\le C$, Jensen's inequality yields
\begin{equation}
    \mathbb{E}|\Delta_i(r,\bv)|^p\le \int_0^1
    \mathbb{E}\big[|\eta W|^pA_\tau^p\big]d\tau
    \le (C\eta)^p\int_0^1
    \mathbb{E}\big[|W|^pA_\tau^2\big]d\tau.
    \label{eq:direction-increment-reduction}
\end{equation}
It remains to show that the integrand is at most $C^pp!/R$, uniformly over $\tau\in[0,1]$.

Fix $\tau\in[0,1]$. Let
\[
    \sigma_\tau=\|\bv_\tau\|_2,
    \qquad
    \bar{\bv}_\tau=\frac{\bv_\tau}{\sigma_\tau}.
\]
Since $\bv$ and $\bv^*$ are unit vectors, some algebra finds
\[
    \sigma_\tau^2=1-\tau(1-\tau)\eta^2,
\]
and hence $\sigma_\tau\asymp1$. In the two-dimensional subspace spanned by $\bv$ and $\bv^*$, choose a unit vector $\bu_\tau\perp\bar{\bv}_\tau$ and write
\[
    \bv^*=\alpha_\tau\bar{\bv}_\tau+\beta_\tau\bu_\tau,
    \qquad
    \ba=m_\tau\bar{\bv}_\tau+n_\tau\bu_\tau.
\]
Here $\alpha_\tau\ge0$, $|\beta_\tau|\le C\eta\le c_0/R$, and
$|m_\tau|+|n_\tau|\le C$. Replacing $\bu_\tau$ by $-\bu_\tau$ if necessary, we may assume $\beta_\tau\ge0$. Let
\[
    Z=\bx_i^\top\bar{\bv}_\tau,
    \qquad
    T=\bx_i^\top\bu_\tau.
\]
Then $Z,T$ are independent standard normal variables,
\[
    Z_\tau=\sigma_\tau Z,
    \qquad
    W=m_\tau Z+n_\tau T,
\]
and, conditioning on $(Z,T)$,
\[
    y_i\sim\Bern\big(s(R[\alpha_\tau Z+\beta_\tau T])\big).
\]
Moreover, $r_\tau:=r\sigma_\tau\in[R/4,2R]$. Therefore, Lemma \ref{lem:1dprobound} and the inequality
$|m_\tau Z+n_\tau T|^p\le C^p(|Z|^p+|T|^p)$ give
\begin{equation}
    \mathbb{E}\Big[|W|^p\{s(rZ_\tau)-y_i\}^2\Big]
    \le \frac{C^pp!}{R}.
    \label{eq:direction-residual-moment}
\end{equation}

We next control the derivative term. By independence of $Z$ and $T$, \eqref{eq:gaussian-moment-basic}, and $s'(u)^2\le e^{-|u|}$,
\begin{align*}
     \mathbb{E}\Big[|W|^p(rZ_\tau)^2s'(rZ_\tau)^2\Big] 
    & \le C^pr_\tau^2\mathbb{E}\big[|Z|^{p+2}s'(r_\tau Z)^2\big]
    +C^pr_\tau^2\mathbb{E}|T|^p\,
    \mathbb{E}\big[Z^2s'(r_\tau Z)^2\big]\\
    &\le \frac{C^p(p+2)!}{r_\tau^{p+1}}
    +\frac{(C\sqrt{p+1})^p}{r_\tau}
    \le \frac{C^pp!}{R}.
\end{align*}
Together with \[A_\tau^2
    \le2\{s(rZ_\tau)-y_i\}^2
    +2(rZ_\tau)^2s'(rZ_\tau)^2,\] 
    this and \eqref{eq:direction-residual-moment} imply
\(
    \mathbb{E}\big[|W|^pA_\tau^2\big]
    \le \frac{C^pp!}{R}\) 
uniformly over $\tau\in[0,1]$. Substituting this into \eqref{eq:direction-increment-reduction} proves the desired claim.
\end{proof}

\section{Covering Arguments in Proofs of Lemmas \ref{lem:true-direction-process}--\ref{lem:localized-increment}}\label{app:covering}
\subsection{Covering Argument for Lemma \ref{lem:true-direction-process}}\label{app:lem53covering}
We now make the bound uniform over \(r\in[R/2,2R]\). For convenience we define $\Psi_i(r):=\big(s(rg_i)-y_i\big)g_i-\mathbb{E}\big[\big(s(rg_i)-s(Rg_i)\big)g_i\big]$, and hence we can write $S_n(r,\bv^*)=\frac{1}{n}\sum_{i=1}^n\Psi_i(r) $ in view of (\ref{Snexpandver}). Recall that $g_i=\bx_i^\top\bv^*\sim N(0,1)$. By Lemma \ref{lem:maxgaussian}, 
\[
    \mathbb{P}\bigg(\max_{i\in[n]}|g_i|\le 2\sqrt{\log n}\bigg)\ge 1- \frac{2}{n}.
\]
On this event, in light of 
\[\bigg|\frac{d}{dr}[(s(rg_i)-y_i)g_i]\bigg|=g_i^2s'(rg_i) \lesssim \log n, \qquad \forall i\in[n],\] 
we have that $(s(rg_i)-y_i)g_i$ is $O(\log n)$-Lipschitz for all $i\in[n]$. Moreover, in light of 
\[\frac{d}{dr}\mathbb{E}\big[\big(s(rg_i)-s(Rg_i)\big)g_i\big] = \mathbb{E}\big[s'(rg_i)g_i^2\big]=O(1),\] it follows that 
$\mathbb{E}\big[\big(s(rg_i)-s(Rg_i)\big)g_i\big]$ is $O(1)$-Lipschitz. Taken collectively, we know that 
$\Psi_i(r)$ is $O(\log n)$-Lipschitz, and hence $S_n(r,\bv^*)$ is $O(\log n)$-Lipschitz on $r$. For some small enough $\varepsilon$ to be chosen, take an \(\varepsilon\)-net of \([R/2,2R]\), denoted by $N_{\varepsilon}$, and suppose that  $|N_{\varepsilon}|\le\frac{2R}{\varepsilon}$. For any $r\in [\frac{R}{2},2R]$, we let $r_{\varepsilon}:=\textrm{arg}\min_{w\in N_\varepsilon}|w-r|$ (with ties broken arbitrarily). Then, 
\begin{align*}\sup_{r\in[R/2,2R]}|S_n(r,\bv^*)|&\le\sup_{r\in[R/2,2R]}|S_n(r_{\varepsilon},\bv^*)|+\sup_{r\in[R/2,2R]}|S_n(r,\bv^*)-S_n(r_{\varepsilon},\bv^*)|  \nn\\
    &\stackrel{(a)}{\le} \sup_{r\in N_\varepsilon}|S_n(r,\bv^*)|+ O(\varepsilon\log n) \\ 
    &\stackrel{(b)}{\lesssim} \sqrt{\frac{\log(R/\varepsilon)}{nR^3}} + \frac{
    \log(R/\varepsilon)
    }{nR} + \varepsilon\log n,
\end{align*}
where $(a)$ holds because $r_\varepsilon\in N_\varepsilon$ and $S_n(r,\bv^*)$ is $O(\log n)$-Lipschitz on $r$, $(b)$ holds with probability at least $1-\frac{\varepsilon}{R}$ by the non-uniform bound (\ref{nonuniform1}) along with a union bound over $N_\varepsilon$ and a suitable choice of $t$. We now choose $\varepsilon = \frac{1}{nR}$, then the desired bound holds with at least $1-\frac{C}{n}$ probability. The proof is complete.

\subsection{Covering Argument for Lemma \ref{lem:localized-increment}}\label{app:lem54covering}

We now invoke a covering argument to make the bound uniform. Let $U=d\log(nR).$
Define
$
    \Gamma
    =
    C\big\{
        \delta\sqrt{\frac{U}{nR}}
        +
        \delta\frac{U}{n}\big\}.$ 
Take an \(\varepsilon_v\)-net \(\mathcal N_v\) of \(\mathcal V_\delta\) with
$
    \varepsilon_v=\frac{\Gamma}{C\sqrt d},$ 
and an \(\varepsilon_r\)-net \(\mathcal N_r\) of \([R/2,2R]\) with
$\varepsilon_r=\frac{\Gamma}{C}.$ By standard covering number bound we suppose $
    |\mathcal N_v|
    \le
    \left(\frac{C\delta}{\varepsilon_v}\right)^d,~~|\mathcal N_r|
    \le
    \frac{CR}{\varepsilon_r}.$
Increasing constants if necessary, we have 
$\log|\mathcal N_v|+\log|\mathcal N_r|\le C d\log(nR)$. Applying (\ref{fixedrvbound}) with \(t=C d\log(nR)\) and taking
a union bound over \(\mathcal N_r\times\mathcal N_v\), we get
\begin{align}
     \mathbb{P}\bigg(\max_{r\in\mathcal N_r}
    \max_{\bv\in\mathcal N_v}
    |S_n(r,\bv)-S_n(r,\bv^*)|
    \le
    C\bigg\{
        \delta\sqrt{\frac{d\log(nR)}{nR}}
        +
        \delta\frac{d\log(nR)}{n}
    \bigg\}\bigg)\ge 1-2e^{-cd}.\label{netbound11}
\end{align} 
By Lemma \ref{lem:operconcen},
\begin{align}
    \bigg\|
        \frac1n\sum_{i=1}^n \bx_i \bx_i^\top
    \bigg\|_{\mathrm{op}}\le C,\qquad \frac1n\sum_{i=1}^n \|\bx_i\|_2\le C\sqrt d\label{2events1}
\end{align}
hold with probability at least \(1-2e^{-d}\). On these events, we claim that the map
$
    (r,\bv)\mapsto S_n(r,\bv)-S_n(r,\bv^*)$ 
is \(O(1)\)-Lipschitz in \(r\) and \(O(\sqrt d)\)-Lipschitz in \(\bv\). We pause to show this fact. Recall that $S_n(r,\bv)-S_n(r,\bv^*)$ can be written as (\ref{differiidsum}), where $\psi_i(r,\bv)=\{s(r\bx_i^\top\bv)-y_i\}\bx_i^\top\bv$, 
and therefore
\[
    S_n(r,\bv)-S_n(r,\bv^*) = \frac{1}{n}\sum_{i=1}^n\big[\psi_i(r,\bv)-\mathbb{E}\psi_i(r,\bv)\big] -\frac{1}{n}\sum_{i=1}^n\big[\psi_i(r,\bv^*)-\mathbb{E}\psi_i(r,\bv^*)\big].  
\]

We first show that the map
$
    (r,\bv)\mapsto S_n(r,\bv)-S_n(r,\bv^*)$ 
is \(O(1)\)-Lipschitz in \(r\). Note that 
$\frac{\partial}{\partial r}\psi_i(r,\bv)=(\bx_i^\top\bv)^2s'(r\bx_i^\top\bv),$ and hence 
\[
    \frac{\partial}{\partial r}\frac{1}{n}\sum_{i=1}^n\big[\psi_i(r,\bv)-\mathbb{E}\psi_i(r,\bv)\big]= \frac{1}{n}\sum_{i=1}^n (\bx_i^\top\bv)^2s'(r\bx_i^\top\bv)-\mathbb{E}\big[(\bx_i^\top\bv)^2s'(r\bx_i^\top\bv)\big]. 
\]
By $s'(a)\le 1/4$ and the first event in (\ref{2events1}),
\[
    \bigg| \frac{\partial}{\partial r}\frac{1}{n}\sum_{i=1}^n\big[\psi_i(r,\bv)-\mathbb{E}\psi_i(r,\bv)\big]\bigg| \le \frac{1}{4n}\sum_{i=1}^n (\bx_i^\top\bv)^2 + \frac{1}{4}\mathbb{E}(\bx_i^\top\bv)^2 =O(1)
\]
By treating $\frac{1}{n}\sum_{i=1}^n\big[\psi_i(r,\bv^*)-\mathbb{E}\psi_i(r,\bv^*)\big]$ similarly, we arrive at the $O(1)$-Lipschitzness in $r$.

We then show that the map
$
    (r,\bv)\mapsto S_n(r,\bv)-S_n(r,\bv^*)$ 
is \(O(\sqrt{d})\)-Lipschitz in \(\bv\).
We compute the gradient 
\[
    \nabla_\bv\psi_i(r,\bv)
    =
    \bx_i
    \left[
        s(r\bx_i^\top\bv)-y_i
        +
        r\bx_i^\top\bv s'(r\bx_i^\top\bv)
    \right]:=\bx_i \phi(r,\bv),
\] 
which then yields
\[
    \nabla_\bv \{S_n(r,\bv)-S_n(r,\bv^*)\} = \frac{1}{n}\sum_{i=1}^n     
   \phi(r,\bv) \bx_i - \mathbb{E}\big[\phi(r,\bv)\bx_i\big].
\]
Note that $\phi(r,\bv)$ is uniformly bounded because 
\(
    |s(r\bx_i^\top\bv)-y_i|\le 1
\)
and
\(
    \sup_{u\in\mathbb R}|u s'(u)|=O(1).\)
Thus, on the second event of (\ref{2events1}), 
\[
    \|\nabla_\bv \{S_n(r,\bv)-S_n(r,\bv^*)\}\|_2\lesssim \frac{1}{n}\sum_{i=1}^n\|\bx_i\|_2 +\mathbb{E}\|\bx_i\|_2\lesssim \sqrt{d}, 
\]
as claimed.

We are ready to conclude the proof. By the choices of \(\varepsilon_r\) and \(\varepsilon_v\), the discretization
error is at most \(\Theta(\Gamma)=\Theta(\delta\sqrt{\frac{U}{nR}}+\delta\frac{U}{n})\), which does not increase the order of the bound in (\ref{netbound11}). This proves the desired claim. 

\section{Deferred Details in Proof of Lemma \ref{lem:direct-U}}\label{app:bernsteinminimax}
\subsection{Bounding $L_m$}

We first control the centered variables
\[
    \eta_i=e_iT_i-\mathbb{E}(e_iT_i)
    =e_iT_i-q(R)\beta.
\]
By the centered transverse moment bounds in Lemma \ref{lem:1dprobound}, for every integer $p\ge2$ and $k\ge1$,
\begin{align}
  \mathbb{E}|\eta_i|^p
  &\le\frac{C^pp!}{R},
  \label{eq:eta-q-moments}\\
  \mathbb{E}|\eta_i|^{2k}
  &\le\frac{C^kk!}{R}.
  \label{eq:eta-even}
\end{align}
Therefore, Lemma~\ref{lem:bernstein210} and $m\ge CRt$ give, with probability at least $1-2e^{-t}$,
\begin{equation}
    \bigg|\sum_{i=1}^m\eta_i\bigg|
    \lesssim \sqrt{\frac{mt}{R}}+t
    \lesssim \sqrt{\frac{mt}{R}}.
    \label{meanetaibound}
\end{equation}
Substituting \eqref{meanetaibound} into \eqref{eq:L-exact} and using the boundedness of $q(R)$,
\begin{equation}
  |L_m|
  \lesssim\beta\sqrt{\frac{t}{mR}}.
  \label{eq:L-pre}
\end{equation}
Further using
$\beta=\sqrt{1-(\breve{\bv}^\top\bv^*)^2}
\le \sqrt{\frac{C(d+t)}{mR}}$, we arrive at (\ref{eq:L-final}).
\subsection{Bounding $Q_m^{\rm sc}$}
Next, we proceed to bound
\begin{equation}
Q_m^{\rm sc} = \frac{1}{m(m-1)}\sum_{i\ne j}\eta_i\eta_j = \frac{1}{m(m-1)}\bigg(\sum_{i=1}^m\eta_i\bigg)^2
   -\frac{1}{m(m-1)}\sum_{i=1}^m\eta_i^2.
  \label{eq:scalar-identity}
\end{equation}
To control the second term, let
$V_i=\eta_i^2-\mathbb{E}\eta_i^2$. By \eqref{eq:eta-even}, for every integer $k\ge2$,
\[
    \mathbb{E}|V_i|^k
    \le 2^{k-1}\big\{\mathbb{E}|\eta_i|^{2k}+
    (\mathbb{E}\eta_i^2)^k\big\}
    \le \frac{C^kk!}{R}.
\]
Another application of Lemma~\ref{lem:bernstein210} gives
\begin{equation}
  \bigg|\sum_{i=1}^m(\eta_i^2-\mathbb{E}\eta_i^2)\bigg|
  \le C\bigg\{\sqrt{\frac{mt}{R}}+t\bigg\}
  \label{eq:Vi-sum}
\end{equation}
with probability at least $1-2e^{-t}$. Since
$\mathbb{E}\eta_i^2\le C/R$, \eqref{eq:Vi-sum} and $m\ge CRt$ imply
\begin{equation}
  \sum_{i=1}^m\eta_i^2
  \lesssim \frac mR+\sqrt{\frac{mt}{R}}+t
  \lesssim\frac mR.
  \label{eq:eta-square-sum}
\end{equation}
Substituting \eqref{eq:eta-square-sum} and \eqref{meanetaibound} into
\eqref{eq:scalar-identity} yields (\ref{eq:Qsc-final}).
\section{Proof of Proposition \ref{prop:asymptotic-radial-bias}}
\label{sec:small-aspect-ratio-bias}
This section derives Proposition \ref{prop:asymptotic-radial-bias} from the high-dimensional asymptotic theory of Zhao, Sur, and Cand\`es \cite{zhao2022asymptotic}. We make the asymptotic setting explicit. Fix $R>0$. For each $n$, let $d=d_n$ and let $\btheta_n^*\in\mathbb{R}^{d_n}$ satisfy $\|\btheta_n^*\|_2=R$. For each fixed aspect ratio $\delta>0$, we first let $n,d_n\to\infty$ with $d_n/n\to\delta$, and then study the resulting deterministic limits as $\delta\to0^+$. We suppress the dependence on $n$ when there is no ambiguity. We also note that \cite{zhao2022asymptotic} uses responses in $\{-1,1\}$. Under the transformation $\tilde y_i=2y_i-1$, their logistic loss is identical to (\ref{mlelogi}) up to a multiplicative constant, and hence their MLE and state equations apply to our formulation.

Recall that
\(
q(r):=\mathbb{E}_{g\sim N(0,1)}[g s(rg)],\) and for convenience, define
\begin{equation}
    \mu(r)
    :=
    \mathbb{E}_{g\sim N(0,1)}[s'(rg)],
    \qquad
    \nu(r)
    :=
    \mathbb{E}_{g\sim N(0,1)}[g^2s'(rg)]
    =
    q'(r).
    \label{eq:app-mu-nu}
\end{equation}
By Stein's identity,
\begin{equation}
    q(r)
    =
    \mathbb{E}_{g\sim N(0,1)}[g s(rg)]
    =
    r\mathbb{E}_{g\sim N(0,1)}[s'(rg)]
    =
    r\mu(r).
    \label{eq:app-q-mu}
\end{equation}
Differentiating (\ref{eq:app-q-mu}) gives
\begin{equation}
    \nu(r)
    =
    \mu(r)+r\mu'(r),
    \qquad
    \mu'(r)
    =
    \mathbb{E}_{g\sim N(0,1)}[g s''(rg)].
    \label{eq:app-nu-mu}
\end{equation}
Since $Gs''(RG)<0$ almost surely away from $G=0$, we have $0<\nu(R)<\mu(R)$. In particular, the asymptotically precise pre-constant in Proposition \ref{prop:asymptotic-radial-bias} is
\[
    \mathfrak{c}_R
    :=
    \frac{1}{2R\nu(R)}
    =
    \frac{1}{2Rq'(R)}.
\]

Let $\bv_n^*:=\btheta_n^*/R$ and
\[
    \bP_n^\perp
    :=
    \bI_{d_n}-\bv_n^*(\bv_n^*)^\top.
\]
For the ordinary MLE $\hat{\btheta}_n$, define the signal-aligned coefficient and orthogonal energy by
\begin{equation}
    M_n
    :=
    \frac{\langle\hat{\btheta}_n,\btheta_n^*\rangle}{R^2},
    \qquad
    V_n^2
    :=
    \|\bP_n^\perp\hat{\btheta}_n\|_2^2.
    \label{eq:app-finite-overlap-energy}
\end{equation}
By orthogonality,
\begin{equation}
    \|\hat{\btheta}_n\|_2^2
    =
    M_n^2R^2+V_n^2.
    \label{eq:app-exact-norm-decomposition}
\end{equation}
The following result specializes \cite[Lemma~2.1, Lemma~3.1, and Eq.~(3.4)]{zhao2022asymptotic} to identity covariance. In their notation,
\[
    \alpha(n)
    =
    \frac{\langle\hat{\btheta}_n,\btheta_n^*\rangle}
         {\|\btheta_n^*\|_2^2}
    =
    M_n,
    \qquad
    \sigma(n)^2
    =
    \|\bP_n^\perp\hat{\btheta}_n\|_2^2
    =
    V_n^2.
\]
We denote their limiting parameters $(\alpha_\star,\sigma_\star,\lambda_\star)$ by $(a_\delta,\tau_\delta,\lambda_\delta)$, respectively.

\begin{lem}[High-dimensional asymptotics, \cite{zhao2022asymptotic}] 
    \label{lem:app-bej-specialization}
    Fix \(R>0\) and \(\delta>0\), and suppose that
    \((\delta,R)\) lies in the asymptotic MLE-existence region.
    If \(n,d_n\to\infty\) with \(d_n/n\to\delta\), then there exist
    deterministic positive constants $(a_{\delta},\tau_{\delta},\lambda_{\delta})$
    depending only on \((\delta,R)\), such that
    \begin{equation}
        M_n\longrightarrow a_{\delta},
        \qquad
        V_n^2\longrightarrow
        \delta\tau_{\delta}^2
        \label{eq:app-bej-limits}
    \end{equation}
    almost surely. Moreover, the triple
    \((a_{\delta},\tau_{\delta},\lambda_{\delta})\)
    is the unique solution of the following state equations (\ref{eq:app-bej-state-1})--(\ref{eq:app-bej-state-3}).
    Let \(G,W\) be independent $N(0,1)$ variables and set
    \[
        Q:=RG,
        \qquad 
        Q_{\delta,2}
        :=
        -a_{\delta}Q
        +
        \sqrt{\delta}\,\tau_{\delta}W.\] 
    Define $\rho(t):=\log(1+\exp(t))$ 
    and
    \[\operatorname{prox}_{\lambda\rho}(z)
        :=
        \arg\min_{x\in\mathbb{R}}
        \bigg\{
            \lambda\rho(x)
            +
            \frac{1}{2}(x-z)^2
        \bigg\}.\]  
    Then
    \begin{align}
        \tau_{\delta}^2
        &=
        \frac{1}{\delta^2}
        \mathbb{E}\left[
            2s(Q)
            \left\{
                \lambda_{\delta}
                s\!\left(
                    \operatorname{prox}_{\lambda_{\delta}\rho}
                    (Q_{\delta,2})
                \right)
            \right\}^2
        \right],
        \label{eq:app-bej-state-1}
        \\
        0
        &=
        \mathbb{E}\left[
            s(Q)Q
            \lambda_{\delta}
            s\!\left(
                \operatorname{prox}_{\lambda_{\delta}\rho}
                (Q_{\delta,2})
            \right)
        \right],
        \label{eq:app-bej-state-2}
        \\
        1-\delta
        &=
        \mathbb{E}\left[
            \frac{
                2s(Q)
            }{
                1+
                \lambda_{\delta}
                s'\!\left(
                    \operatorname{prox}_{\lambda_{\delta}\rho}
                    (Q_{\delta,2})
                \right)
            }
        \right].
        \label{eq:app-bej-state-3}
    \end{align}
\end{lem}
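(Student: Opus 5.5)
This lemma is a specialization of the general results of \cite{zhao2022asymptotic}, so the proof is a translation exercise rather than new analysis. I would check that, in the identity-covariance case, their objects and hypotheses coincide with ours, and then read off \eqref{eq:app-bej-limits} and \eqref{eq:app-bej-state-1}--\eqref{eq:app-bej-state-3} from their Lemma~2.1, Lemma~3.1 and Eq.~(3.4).

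\textbf{Model and estimator.} With $\tilde y_i=2y_i-1$, the model $\mathbb{P}(\tilde y_i=1\mid\bx_i)=s(\bx_i^\top\btheta_n^*)$ is their logistic model. Their covariance is $\bSigma=\bI_{d_n}$, and their signal strength is $\gamma^2=\mathrm{Var}(\bx^\top\btheta_n^*)=R^2$. The loss $\sum_i\rho(-\tilde y_i\bx_i^\top\btheta)$ equals $n L_n(\btheta)$ pointwise, because $\log(1+e^{u})-yu=\rho(-\tilde y u)$ for $y\in\{0,1\}$. Hence the two MLEs coincide, and so do the existence events. Since $(\delta,R)$ lies in the existence region of \cite{candes2020phase}, their standing assumption holds.

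\textbf{Limits of $M_n$ and $V_n^2$.} I would match the coordinates: for $\bSigma=\bI$, their $\alpha(n)=\langle\hat\btheta_n,\btheta_n^*\rangle/\|\btheta_n^*\|_2^2$ is exactly $M_n$, and their $\sigma(n)^2$ is the squared norm of the component of $\hat\btheta_n$ orthogonal to $\btheta_n^*$, which is $V_n^2$. Their Lemma~2.1 then gives almost-sure convergence of $\alpha(n)\to\alpha_\star$ and $\sigma(n)^2\to$ the limit in \eqref{eq:app-bej-limits}. The factor $\delta$ in front of $\tau_\delta^2$ comes from their normalization $\kappa=p/n$ (the quantity $\sqrt{\kappa}\sigma_\star$ in their statement). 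Renaming $(\alpha_\star,\sigma_\star,\lambda_\star)\mapsto(a_\delta,\tau_\delta,\lambda_\delta)$ finishes this part.

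\textbf{State equations and uniqueness.} These are their system (3.4) with $\kappa=\delta$ and $\gamma=R$. Their expectations are over $Q_1=\gamma Z_1$ with weight $2\rho'(Q_1)=2s(Q)$ (from symmetrizing over $\tilde y$), and over $Q_2=-\alpha Q_1+\sqrt{\kappa}\sigma Z_2$, which is our $Q_{\delta,2}$. Substituting $\rho'=s$ and $\rho''=s'$ gives \eqref{eq:app-bej-state-1}--\eqref{eq:app-bej-state-3} term by term. Uniqueness and positivity of the solution are asserted in their Lemma~3.1 for parameters in the existence region.

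\textbf{Main obstacle.} The only real risk is sign and scaling conventions. I expect to need care on three points:
\begin{itemize}
\item whether the prox argument carries $-a_\delta Q$ or $+a_\delta Q$ (this depends on their $\pm1$ encoding and on conditioning on $\tilde y=1$);
\item whether $\tau$ or $\sqrt{\delta}\,\tau$ is the limiting orthogonal norm;
\item whether the prefactor $1/\delta^2$ in \eqref{eq:app-bej-state-1} matches their scaling.
\end{itemize}
I would resolve these by rederiving the symmetrization step: write $\mathbb{E}[f(\tilde y\bx^\top\btheta^*,\dots)]$ as $\mathbb{E}[2s(Q)f(Q,\dots)]$ using the symmetry of $Q$, and then compare the result against their displayed equations.
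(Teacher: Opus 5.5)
Your proposal follows the paper's route exactly: the paper gives no independent argument but simply specializes \cite[Lemma~2.1, Lemma~3.1, Eq.~(3.4)]{zhao2022asymptotic} to identity covariance after the recoding $\tilde y_i=2y_i-1$, under which the two losses agree up to the factor $n$. It identifies their $\alpha(n)$ and $\sigma(n)^2$ with $M_n$ and $V_n^2$, renames $(\alpha_\star,\sigma_\star,\lambda_\star)$ as $(a_\delta,\tau_\delta,\lambda_\delta)$, and takes uniqueness from their Lemma~3.1; your planned re-derivation of the symmetrization $\mathbb{E}[f(\tilde y Q)]=\mathbb{E}[2s(Q)f(Q)]$ and of the $\kappa=\delta$ scaling is a sensible consistency check that the paper leaves implicit.
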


By \eqref{eq:app-exact-norm-decomposition},
Lemma~\ref{lem:app-bej-specialization}, and the continuous mapping
theorem,
\begin{equation}
    \hat R_{\mathrm{MLE}}
    :=
    \|
        \hat{\btheta}_n
    \|_2
    \longrightarrow
    r_{\mathrm{MLE}}(\delta,R)
    :=
    \sqrt{
        a_{\delta}^2R^2
        +
        \delta\tau_{\delta}^2
    }
    \label{eq:app-mle-limit-radius}
\end{equation}
almost surely. It remains to expand the deterministic function
\(r_{\mathrm{MLE}}(\delta,R)\) as \(\delta\to 0^+\).

\subsection{Change of Variables}

Introduce
\[
    u_{\delta}:=\tau_{\delta}^2,
    \qquad
    \ell_{\delta}:=\frac{\lambda_{\delta}}{\delta}.
\]
Then
\[
    r_{\mle}(\delta,R)
    =
    \sqrt{a_\delta^2R^2+\delta u_\delta}.
\]
For \(\delta>0\), define the shorthand
\begin{equation}
    X_{\delta}
    :=
    \operatorname{prox}_{\delta\ell_{\delta}\rho}
    (
        -a_{\delta}Q
        +
        \sqrt{\delta u_{\delta}}\,W
    ).
    \label{eq:app-Xdelta}
\end{equation}
Substituting
\(\lambda_{\delta}=\delta\ell_{\delta}\)
into \eqref{eq:app-bej-state-1}--\eqref{eq:app-bej-state-3},
we obtain 
\begin{align}
    u_{\delta}
    &=
    2\ell_{\delta}^2
    \mathbb{E}\bigl[
        s(Q)s(X_{\delta})^2
    \bigr],
    \label{eq:app-scaled-state-1}
    \\
    0
    &=
    \mathbb{E}\bigl[
        s(Q)Q s(X_{\delta})
    \bigr],
    \label{eq:app-scaled-state-2}
    \\
    1
    &=
    2\ell_{\delta}
    \mathbb{E}\left[
        \frac{
            s(Q)s'(X_{\delta})
        }{
            1+
            \delta\ell_{\delta}s'(X_{\delta})
        }
    \right].
    \label{eq:app-scaled-state-3}
\end{align}
To see \eqref{eq:app-scaled-state-2}, note that the second state equation in \eqref{eq:app-bej-state-2}
becomes 
\[
    0
    =
    \delta\ell_\delta
    \mathbb{E}\big[s(Q)Qs(X_\delta)\big].
\]
Since $\lambda_\delta=\delta\ell_\delta>0$ for every $\delta>0$,
dividing both sides by $\delta\ell_\delta$ yields \eqref{eq:app-scaled-state-2}.
To verify \eqref{eq:app-scaled-state-3}, note first that the symmetry of
\(Q\) gives
$
    \mathbb{E}[2s(Q)]=1.$ 
Hence \eqref{eq:app-bej-state-3} implies
\[
\begin{aligned}
    \delta
    =
    1-
    \mathbb{E}\left[
        \frac{
            2s(Q)
        }{
            1+
            \delta\ell_{\delta}s'(X_{\delta})
        }
    \right]
    =
    \delta
    \mathbb{E}\left[
        \frac{
            2s(Q)\ell_{\delta}s'(X_{\delta})
        }{
            1+
            \delta\ell_{\delta}s'(X_{\delta})
        }
    \right],
\end{aligned}
\]
and division by \(\delta\) yields
\eqref{eq:app-scaled-state-3}. In the following, we work with the more amenable system of equations (\ref{eq:app-scaled-state-1})--(\ref{eq:app-scaled-state-3}). 

\subsection{Near-$0$ Behaviors of Solutions to (\ref{eq:app-scaled-state-1})--(\ref{eq:app-scaled-state-3})} 
\begin{lem}
    \label{lem:app-smooth-branch}
    For every fixed \(R>0\), the solution of
    \eqref{eq:app-scaled-state-1}--\eqref{eq:app-scaled-state-3}
    has a smooth local extension to \(\delta=0\). Under the parametrization $t=\sqrt{\delta}$, this local extension is
even in $t$. Consequently, for some constant $a_1(R)$ depending only
on $R$,
\[
    a_\delta
    =
    1+a_1(R)\delta+O_R(\delta^2),
    \qquad
    u_\delta
    =
    \frac{1}{\mu(R)}+O_R(\delta),
    \qquad
    \ell_\delta
    =
    \frac{1}{\mu(R)}+O_R(\delta).
\] 
\end{lem}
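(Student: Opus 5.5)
The plan is an implicit function theorem argument applied to the scaled state equations \eqref{eq:app-scaled-state-1}--\eqref{eq:app-scaled-state-3}, written in the variable $t=\sqrt{\delta}$.

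\textbf{Step 1: a map smooth in $t$ up to $t=0$.} Define
\[
F(t;a,u,\ell)=\Big(u-2\ell^2\mathbb{E}[s(Q)s(X)^2],\ \mathbb{E}[s(Q)Qs(X)],\ 1-2\ell\,\mathbb{E}\big[\tfrac{s(Q)s'(X)}{1+t^2\ell s'(X)}\big]\Big),
\qquad
X=\operatorname{prox}_{t^2\ell\rho}(-aQ+t\sqrt{u}\,W).
\]
The prox is the unique solution $x$ of $x+t^2\ell\,s(x)=z$. Since $1+t^2\ell s'(x)>0$, the ordinary implicit function theorem makes $X$ a smooth function of $(t,a,u,\ell,Q,W)$, and $X=-aQ$ at $t=0$.

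All derivatives of $X$ are polynomially bounded in $(Q,W)$. Also $s,s',s''$ are bounded. So dominated convergence lets me differentiate under the expectation, and $F$ is $C^\infty$ near $t=0$ for $u>0$ and $a$ near $1$.

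\textbf{Step 2: evenness in $t$.} Replacing $t$ by $-t$ amounts to replacing $W$ by $-W$. Since $W\sim N(0,1)$ is symmetric, $F(-t;\cdot)=F(t;\cdot)$.

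\textbf{Step 3: the solution at $t=0$.} Here $X=-aQ$.

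For the second equation I use the symmetry of $Q$ and $s(-x)=1-s(x)$:
\[
\mathbb{E}[s(Q)Q\,s(-aQ)]=\mathbb{E}[s(Q)s(-Q)\,Q\,\phi(Q)],
\]
with $\phi$ chosen so that the integrand is odd exactly when $a=1$. At $a=1$ this is $\mathbb{E}[s(Q)s(-Q)Q]=\mathbb{E}[s'(Q)Q]=0$ by oddness. So $a_0=1$.

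For the third equation, $2\ell\,\mathbb{E}[s(Q)s'(Q)]=\ell\,\mathbb{E}[s'(Q)]$ by symmetry, using $s'(-Q)=s'(Q)$ and $s(Q)+s(-Q)=1$. This equals $\ell\mu(R)$, so $\ell_0=1/\mu(R)$.

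For the first equation,
\[
u_0=2\ell_0^2\,\mathbb{E}[s(Q)s(-Q)^2]=\ell_0^2\,\mathbb{E}[s(Q)s(-Q)]=\ell_0^2\mu(R)=1/\mu(R).
\]
Here I symmetrize $s(Q)s(-Q)^2$ into $\tfrac12 s(Q)s(-Q)$.

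\textbf{Step 4: invertibility of the Jacobian (the main obstacle).} At $t=0$ the equations decouple in triangular form:
\begin{itemize}
\item the second depends only on $a$;
\item the third depends on $(a,\ell)$;
\item the first depends on $(a,u,\ell)$, with $\partial_u=1$.
\end{itemize}
So it suffices to check two scalar derivatives. First, $\partial_\ell$ of the third equation equals $-\mu(R)\neq0$. Second,
\[
\partial_a\,\mathbb{E}[s(Q)Qs(-aQ)]\big|_{a=1}=-\mathbb{E}[s(Q)Q^2s'(Q)]=-\tfrac12\mathbb{E}[Q^2s'(Q)]=-\tfrac12R^2\nu(R)<0.
\]
Both are nonzero, so the Jacobian in $(a,u,\ell)$ is invertible.

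The implicit function theorem then gives a smooth solution branch $(a,u,\ell)(t)$ near $t=0$. By Step 2 and uniqueness, this branch is even in $t$, hence smooth in $\delta=t^2$.

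One point needs checking. For small $\delta>0$, this branch must coincide with the unique solution guaranteed by Lemma~\ref{lem:app-bej-specialization}. The scaled system is equivalent to the original one for $\delta>0$, so uniqueness of the original solution identifies the two.

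\textbf{Step 5: the expansions.} Taylor-expanding the branch in $\delta$ gives $a_\delta=1+a_1\delta+O(\delta^2)$, and $u_\delta,\ell_\delta=1/\mu(R)+O(\delta)$.
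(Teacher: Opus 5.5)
Your proposal is correct and follows essentially the same route as the paper. Both arguments apply the implicit function theorem in $t=\sqrt{\delta}$ at the base point $(t,u,\ell,a)=(0,1/\mu(R),1/\mu(R),1)$, obtain evenness from $W\mapsto -W$, and identify the local branch with the state-equation solution through its global uniqueness for small $\delta>0$. The only difference is cosmetic: you note that at $t=0$ the map $X=-aQ$ does not depend on $(u,\ell)$, so the Jacobian is triangular and only $\partial_u F_1=1$, $\partial_\ell F_3=-\mu(R)$ and $\partial_a F_2=-R^2\nu(R)/2$ are needed, whereas the paper writes out the full matrix and its determinant.
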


\begin{proof}
    Set
    $
        t:=\sqrt{\delta}.$ 
    For \(u>0\), \(\ell>0\), and \(a>0\), define
    \begin{align}\label{eq:prox-foc}
        \mathcal{X}(t;u,\ell,a)
        :=
        \operatorname{prox}_{t^2\ell\rho}
        \left(
            -aQ+t\sqrt{u}\,W
        \right).
    \end{align}
    Equivalently,
    \(x=\mathcal{X}(t;u,\ell,a)\)
    is the unique solution of 
    \begin{equation}
        x+t^2\ell s(x)
        =
        -aQ+t\sqrt{u}\,W.
        \label{eq:app-prox-implicit}
    \end{equation}
    The derivative of the left-hand side with respect to \(x\) is
    \(
        1+t^2\ell s'(x)\ge1.\)
    Therefore, \eqref{eq:app-prox-implicit} has a unique solution, and the scalar implicit function theorem shows that \(\mathcal{X}\) is smooth in \((t,u,\ell,a)\) in a neighborhood of
    \[
        \left(
            0,
            \frac{1}{\mu(R)},
            \frac{1}{\mu(R)},
            1
        \right).
    \]
    Fix a compact neighborhood of this point on which \(u,\ell,a\) are bounded away from zero. Repeated implicit differentiation of \eqref{eq:app-prox-implicit} shows that, for every fixed multi-index \(\gamma\), there exist constants \(C_\gamma,k_\gamma>0\) such that
    \[
        \big|
            \partial^\gamma\mathcal{X}(t;u,\ell,a)
        \big|
        \le
        C_\gamma(1+|Q|+|W|)^{k_\gamma}
    \]
    uniformly on this neighborhood. Indeed, every derivative is a finite sum of products of lower-order derivatives and bounded derivatives of \(s\), divided by a power of \(1+t^2\ell s'(\mathcal{X})\ge1\). Gaussian integrability therefore justifies all differentiations under the expectations below. Define  
    \begin{align*}
        F_1(t,u,\ell,a)
        &:=
        u
        -
        2\ell^2
        \mathbb{E}\left[
            s(Q)
            s\!\left(
                \mathcal{X}(t;u,\ell,a)
            \right)^2
        \right],
        \\
        F_2(t,u,\ell,a)
        &:=
        \mathbb{E}\left[
            s(Q)Q
            s\!\left(
                \mathcal{X}(t;u,\ell,a)
            \right)
        \right],
        \\
        F_3(t,u,\ell,a)
        &:=
        1
        -
        2\ell
        \mathbb{E}\left[
            \frac{
                s(Q)
                s'\!\left(
                    \mathcal{X}(t;u,\ell,a)
                \right)
            }{
                1+
                t^2\ell
                s'\!\left(
                    \mathcal{X}(t;u,\ell,a)
                \right)
            }
        \right].
    \end{align*}
    Let \(F=(F_1,F_2,F_3)\). At \(t=0\), by definition we have 
    \[
        \mathcal{X}(0;u,\ell,a)=-aQ.
    \]

    We note the following identities: 
    \begin{equation}
        \mathbb{E}\bigl[
            s(Q)s(-Q)^2
        \bigr]
        =
        \frac{\mu(R)}{2},
        \qquad
        \mathbb{E}\bigl[
            s(Q)s'(Q)
        \bigr]
        =
        \frac{\mu(R)}{2},
        \qquad
        \mathbb{E}\bigl[
            Qs'(Q)
        \bigr]
        =
        0.
        \label{eq:app-basic-symmetry}
    \end{equation}
    We pause to prove them. For the first identity, symmetry of \(Q\), together with
\(s(-x)=1-s(x)\) and \(s'(x)=s(x)s(-x)\), gives
\begin{align*}
    2\mathbb{E}\bigl[s(Q)s(-Q)^2\bigr]
    &=
    \mathbb{E}\bigl[
        s(Q)s(-Q)^2+s(-Q)s(Q)^2
    \bigr] \\
    &=
    \mathbb{E}\bigl[
        s(Q)s(-Q)\{s(Q)+s(-Q)\}
    \bigr] =
    \mathbb{E}[s'(Q)]
    =
    \mu(R).
\end{align*}
For the second identity, since \(s'\) is even,
\begin{align*}
    2\mathbb{E}[s(Q)s'(Q)]
    &=
    \mathbb{E}\bigl[
        \{s(Q)+s(-Q)\}s'(Q)
    \bigr] =
    \mathbb{E}[s'(Q)]
    =
    \mu(R).
\end{align*}
Finally, \(Qs'(Q)\) is odd, and therefore
\(
    \mathbb{E}[Qs'(Q)]=0.
\)

Consequently, we have
    \begin{equation}
        F_i\left(
            0,
            \frac{1}{\mu(R)},
            \frac{1}{\mu(R)},
            1
        \right)
        =
        0,\quad 1\le i\le 3,
        \label{eq:app-base-solution}
    \end{equation}
in light of 
\begin{gather*}
      F_1\left(
            0,
            \frac{1}{\mu(R)},
            \frac{1}{\mu(R)},
            1
        \right) = \frac{1}{\mu(R)}- \frac{2\mathbb{E}[s(Q)s(-Q)^2]}{(\mu(R))^2}=0;
        \\
        F_2\left(
            0,
            \frac{1}{\mu(R)},
            \frac{1}{\mu(R)},
            1
        \right) = \mathbb{E}\big[Qs(Q)s(-Q)\big] =  \mathbb{E}\big[Qs'(Q)\big] =0;\\
         F_3\left(
            0,
            \frac{1}{\mu(R)},
            \frac{1}{\mu(R)},
            1
        \right) = 1-\frac{2\mathbb{E}[s(Q)s'(-Q)]}{\mu(R)} =0 . 
\end{gather*}

    We next compute the Jacobian with respect to
    \((u,\ell,a)\)
    at the point in \eqref{eq:app-base-solution}.
  We repeatedly use the following consequence of symmetry: if \(h\) is
even and integrable, then
\[
    \mathbb{E}[s(Q)h(Q)]
    =  \mathbb{E}\bigg[s(Q)\frac{h(Q)+h(-Q)}{2}\bigg]= \frac{1}{2}\mathbb{E}\bigg[s(Q)h(Q)+s(-Q)h(Q)\bigg]=
    \frac12\mathbb{E}[h(Q)]. 
\]  First, since \(Q^2s'(Q)\) is even,
\begin{align}
    \mathbb{E}[s(Q)Q^2s'(Q)]
    &=
    \frac12\mathbb{E}[Q^2s'(Q)]
     =
    \frac{R^2\nu(R)}2.
    \label{eq:symmetry-first}
\end{align}
Also, \(s''\) is odd, so \(Qs''(-Q)\) is even and
\begin{align}
    \mathbb{E}[s(Q)Qs''(-Q)]
    =
    \frac12\mathbb{E}[Qs''(-Q)] =
    -\frac12\mathbb{E}[Qs''(Q)] =
    -\frac R2\mathbb{E}[Gs''(RG)] =
    \frac{\mu(R)-\nu(R)}2,
    \label{eq:symmetry-second}
\end{align}
where the last equality uses
\(\nu(R)=\mu(R)+R\mu'(R)\). To compute the Jacobian, we first differentiate \eqref{eq:prox-foc}
implicitly. Writing
\[
    D(t;u,\ell,a)
    :=
    1+t^2\ell s'\big(\mathcal X(t;u,\ell,a)\big),
\]
we obtain
\[
    \partial_u\mathcal X
    =
    \frac{tW}{2\sqrt{u}D},
    \qquad
    \partial_\ell\mathcal X
    =
    -\frac{t^2s(\mathcal X)}{D},
    \qquad
    \partial_a\mathcal X
    =
    -\frac{Q}{D}.
\]
Hence, at the base point in \eqref{eq:app-base-solution},
\[
    \partial_u\mathcal X
    =
    \partial_\ell\mathcal X
    =
    0,
    \qquad
    \partial_a\mathcal X=-Q.
\]
Using these identities and (\ref{eq:app-basic-symmetry}), direct
differentiation gives   
\begin{align*}
    \partial_uF_1&=1,
    &
    \partial_\ell F_1&=-2,
    &
    \partial_aF_1
    &=
    4\ell^2\mathbb{E}\bigl[Q\{s'(Q)\}^2\bigr]
    =0,\\
    \partial_uF_2&=0,
    &
    \partial_\ell F_2&=0,
    &
    \partial_aF_2
    &=
    -\mathbb{E}[s(Q)Q^2s'(Q)]
    =-\frac{R^2\nu(R)}2,\\
    \partial_uF_3&=0,
    &
    \partial_\ell F_3&=-\mu(R),
    &
    \partial_aF_3
    &=
    \frac{2}{\mu(R)}
    \mathbb{E}[s(Q)Qs''(-Q)]
    =
    \frac{\mu(R)-\nu(R)}{\mu(R)}.
\end{align*} Therefore,
    \begin{equation}
        D_{(u,\ell,a)}F
        \left(
            0,
            \frac{1}{\mu(R)},
            \frac{1}{\mu(R)},
            1
        \right)
        =
        \begin{pmatrix}
            1
            &
            -2
            &
            0
            \\[3pt]
            0
            &
            0
            &
            -\dfrac{R^2\nu(R)}{2}
            \\[8pt]
            0
            &
            -\mu(R)
            &
            \dfrac{\mu(R)-\nu(R)}{\mu(R)}
        \end{pmatrix}.
        \label{eq:app-state-jacobian}
    \end{equation}
    Its determinant is
    $
        -\frac{R^2\mu(R)\nu(R)}{2}\neq0.$ 
    The implicit function theorem  therefore produces a unique smooth local
    solution
    $
        t
        \longmapsto
        \bigl(
            u(t),\ell(t),a(t)
        \bigr).$

    Finally, replacing \(t\) by \(-t\) in
    \eqref{eq:app-prox-implicit}
    has the same effect in distribution as replacing \(W\) by \(-W\).
    Since \(W\) is symmetric,
    \[
        F(-t,u,\ell,a)
        =
        F(t,u,\ell,a).
    \]
    The uniqueness of the local implicit-function solution therefore implies that \(u(t)\), \(\ell(t)\), and \(a(t)\) are even functions of \(t\). Consequently, for every fixed \(k\ge0\),
    \[
        u(t)
        =
        \sum_{j=0}^{k}u_{2j}t^{2j}
        +
        O_R(t^{2k+2}),
    \]
    and analogously for \(\ell(t)\) and \(a(t)\). Equivalently, after setting \(\delta=t^2\), these functions admit finite-order expansions in integer powers of \(\delta\).

    Shrinking the neighborhood if necessary, the local solution satisfies \(u(t),\ell(t),a(t)>0\). For every sufficiently small \(t>0\), it solves \eqref{eq:app-scaled-state-1}--\eqref{eq:app-scaled-state-3} with \(\delta=t^2\). By the uniqueness of the state-equation solution in the MLE-existence region \cite[Lem. 3.1 \& discussion following Eq.~(3.4)]{zhao2022asymptotic}, this local branch coincides with \((u_\delta,\ell_\delta,a_\delta)\) for all sufficiently small \(\delta>0\).
\end{proof}

Next, we establish a more precise expression for $a_\delta$.

\begin{lem}[First-order expansion of \(a_{\delta}\)]
    \label{lem:app-m-expansion}
    For every fixed \(R>0\),
    \begin{equation}
        a_{\delta}
        =
        1
        +
        \frac{
            \mu(R)-\nu(R)
        }{
            2R^2\mu(R)\nu(R)
        }
        \delta
        +
        O_R(\delta^2).
        \label{eq:app-m-expansion}
    \end{equation}
\end{lem}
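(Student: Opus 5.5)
The plan is to read off the $\delta$-coefficient of $a_\delta$ from a second-order expansion in $t=\sqrt{\delta}$ of the \emph{second} state equation alone. Lemma~\ref{lem:app-smooth-branch} already gives a smooth, even local branch $t\mapsto(u(t),\ell(t),a(t))$. We can therefore write
\[
u(t)=\tfrac{1}{\mu(R)}+u_2t^2+O(t^4),\qquad \ell(t)=\tfrac{1}{\mu(R)}+\ell_2t^2+O(t^4),\qquad a(t)=1+a_2t^2+O(t^4),
\]
with $a_2=a_1(R)$, and then impose $F_2(t,u(t),\ell(t),a(t))\equiv0$ at order $t^2$. The Jacobian \eqref{eq:app-state-jacobian} shows $\partial_uF_2=\partial_\ell F_2=0$ at the base point. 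Hence the unknowns $u_2,\ell_2$ drop out at this order, and $F_2$ alone determines $a_2$. This decoupling is what makes the route short.

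\textbf{Step 1: expand $X$.} Solving \eqref{eq:app-prox-implicit} iteratively gives
\[
\mathcal X=-aQ+t\sqrt{u}\,W-t^2\ell\, s(-aQ)+O(t^3),
\]
with remainder polynomially bounded in $(Q,W)$. Inserting $a=1+a_2t^2$ turns this into
\[
\mathcal X=-Q+t\sqrt{u}\,W-t^2\big(a_2Q+\ell s(-Q)\big)+O(t^3).
\]

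\textbf{Step 2: Taylor-expand $F_2$.} Expanding $s(\mathcal X)$ around $-Q$ to second order gives
\[
s(\mathcal X)=s(-Q)+s'(-Q)\Big(t\sqrt uW-t^2\big(a_2Q+\ell s(-Q)\big)\Big)+\tfrac12 s''(-Q)\,t^2uW^2+O(t^3).
\]
Multiply by $s(Q)Q$ and take expectations. The $W$-odd terms vanish, and $\mathbb E W^2=1$. The $t^0$ term is zero by \eqref{eq:app-basic-symmetry}. At order $t^2$ we obtain the equation
\[
-a_2\,\mathbb E[s(Q)Q^2s'(Q)]-\ell\,\mathbb E[s(Q)s(-Q)Qs'(Q)]+\tfrac u2\,\mathbb E[s(Q)Qs''(-Q)]=0,
\]
evaluated at $u=\ell=1/\mu(R)$, since the corrections $u_2,\ell_2$ only enter at higher order.

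\textbf{Step 3: evaluate the three expectations.}
\begin{itemize}
\item The first term equals $R^2\nu(R)/2$ by \eqref{eq:symmetry-first}.
\item The second term vanishes: $s(Q)s(-Q)=s'(Q)$, so the integrand is $Qs'(Q)^2$, which is odd.
\item The third term equals $(\mu(R)-\nu(R))/2$ by \eqref{eq:symmetry-second}.
\end{itemize}
The equation therefore reads
\[
-a_2\,\frac{R^2\nu(R)}{2}+\frac{\mu(R)-\nu(R)}{4\mu(R)}=0,
\]
so $a_2=\dfrac{\mu(R)-\nu(R)}{2R^2\mu(R)\nu(R)}$, which is \eqref{eq:app-m-expansion}.

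\textbf{Step 4: remainders.} The main obstacle is rigor of the remainders: the $O(t^3)$ terms in $\mathcal X$ and in $s(\mathcal X)$ must contribute $O(t^3)$ after expectation. This follows from the polynomial derivative bounds on $\mathcal X$ established in the proof of Lemma~\ref{lem:app-smooth-branch}, together with boundedness of $s$ and its derivatives and Gaussian integrability. Evenness in $t$ then upgrades the error from $O(t^3)$ to $O(t^4)=O_R(\delta^2)$.

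As a cross-check, one could instead differentiate $F(t,u(t),\ell(t),a(t))=0$ twice in $t$ and solve the linear system with the Jacobian \eqref{eq:app-state-jacobian}. Its second row again isolates $a_2$ and should give the same value.
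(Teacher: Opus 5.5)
Your proposal is correct and is essentially the paper's argument. The paper differentiates the proximal equation and the second state equation twice at $t=0$, and it uses evenness of the branch to kill $u'(0)$, $\ell'(0)$, $a'(0)$; this is the same as your order-$t^2$ coefficient matching. It then evaluates the same three expectations via \eqref{eq:symmetry-first}, \eqref{eq:symmetry-second}, and oddness of $Q\{s'(Q)\}^2$, arriving at $a''(0)=2a_2=(\mu(R)-\nu(R))/(R^2\mu(R)\nu(R))$.
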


\begin{proof}
    Set \(t:=\sqrt{\delta}\), and let
    \(
        t
        \longmapsto
        \bigl(
            u(t),\ell(t),a(t)
        \bigr)
    \) denote the smooth local solution constructed in
    Lemma~\ref{lem:app-smooth-branch}, so that, for \(t\geq 0\),
    \[
        u(t)=u_{t^2},
        \qquad
        \ell(t)=\ell_{t^2},
        \qquad
        a(t)=a_{t^2}.
    \]
    Recall that
    $
        u(0)=\ell(0)=\frac{1}{\mu(R)},
        ~
        a(0)=1.$
    Moreover, the three functions are even in \(t\). Consequently,
    \begin{equation}
        u'(0)=\ell'(0)=a'(0)=0.
        \label{eq:app-branch-first-derivatives-zero}
    \end{equation}
    Define
    \[
        \mathcal{X}(t)
        :=
        \operatorname{prox}_{t^2\ell(t)\rho}
        \big(
            -a(t)Q+t\sqrt{u(t)}\,W
        \big).
    \]
    By the first-order condition for the proximal operator,
    \(\mathcal{X}(t)\) is the unique solution of
    \begin{equation}
        \mathcal{X}(t)
        +
        t^2\ell(t)s\bigl(\mathcal{X}(t)\bigr)
        =
        -a(t)Q+t\sqrt{u(t)}\,W.
        \label{eq:app-prox-branch-equation}
    \end{equation}
    All differentiations below are justified by the smoothness and
    Gaussian-integrability argument in
    Lemma~\ref{lem:app-smooth-branch}. 

    Setting \(t=0\) in
    \eqref{eq:app-prox-branch-equation} gives
    \begin{equation}
        \mathcal{X}(0)=-Q.
        \label{eq:app-X-zero}
    \end{equation}
    Differentiating \eqref{eq:app-prox-branch-equation} once with respect
    to \(t\), and then evaluating at \(t=0\), yields
    \begin{equation}
        \mathcal{X}'(0)
        =
        \sqrt{u(0)}\,W
        =
        \frac{W}{\sqrt{\mu(R)}}.
        \label{eq:app-X-first-derivative}
    \end{equation}
    Differentiating a second time, and using
    \eqref{eq:app-branch-first-derivatives-zero}, gives
    \begin{equation}
        \mathcal{X}''(0)
        +
        2\ell(0)s\bigl(\mathcal{X}(0)\bigr)
        =
        -a''(0)Q.
    \end{equation}
    Note that in the second differentiation, all terms involving
\(u'(0)\), \(\ell'(0)\), and \(a'(0)\) vanish by \eqref{eq:app-branch-first-derivatives-zero}; moreover,
\[
    \left.\frac{d^2}{dt^2}
    \bigl\{t\sqrt{u(t)}\bigr\}\right|_{t=0}
    =
    \frac{u'(0)}{\sqrt{u(0)}}=0.
\]
    Hence, by \eqref{eq:app-X-zero} and
    \(\ell(0)=\mu(R)^{-1}\),
    \begin{equation}
        \mathcal{X}''(0)
        =
        -a''(0)Q
        -
        \frac{2}{\mu(R)}s(-Q).
        \label{eq:app-X-second-derivative}
    \end{equation}

    We now use the second state equation
    \eqref{eq:app-scaled-state-2}. Along the smooth solution branch, it
    takes the form
    \begin{equation}
        \mathbb{E}\left[
            s(Q)Q s\bigl(\mathcal{X}(t)\bigr)
        \right]
        =
        0
        \label{eq:app-second-state-along-branch}
    \end{equation}
    for every \(t\) sufficiently close to zero. Differentiating
    \eqref{eq:app-second-state-along-branch} twice with respect to \(t\)
    and evaluating at \(t=0\), we obtain
    \begin{align}
        0
        &=
        \mathbb{E}\left[
            s(Q)Q
            \left\{
                s''(-Q)\bigl(\mathcal{X}'(0)\bigr)^2
                +
                s'(-Q)\mathcal{X}''(0)
            \right\}
        \right].
        \label{eq:app-second-state-second-derivative}
    \end{align}
    Substituting \eqref{eq:app-X-first-derivative} and
    \eqref{eq:app-X-second-derivative}, using
    \(s'(-Q)=s'(Q)\), and then taking expectation over the independent
    standard normal variable \(W\), gives
    \begin{align}
        0
        ={}&
        \frac{1}{\mu(R)}
        \mathbb{E}\left[
            s(Q)Q s''(-Q)
        \right]
        -
        a''(0)
        \mathbb{E}\left[
            s(Q)Q^2s'(Q)
        \right] 
        -
        \frac{2}{\mu(R)}
        \mathbb{E}\left[
            s(Q)Q s'(Q)s(-Q)
        \right].
        \label{eq:app-second-state-second-derivative-expanded}
    \end{align}
    The last expectation vanishes. Indeed,
    \(s(Q)s(-Q)=s'(Q)\), and hence
    \(
        \mathbb{E}\left[
            s(Q)Q s'(Q)s(-Q)
        \right]
        =
        \mathbb{E}\left[
            Q\{s'(Q)\}^2
        \right]
        =
        0
    \)
    by oddness. Furthermore,
    (\ref{eq:symmetry-first}) and
    (\ref{eq:symmetry-second}) give
    \[
        \mathbb{E}\left[
            s(Q)Q^2s'(Q)
        \right]
        =
        \frac{R^2\nu(R)}{2},
        \qquad
        \mathbb{E}\left[
            s(Q)Q s''(-Q)
        \right]
        =
        \frac{\mu(R)-\nu(R)}{2}.
    \]
    Therefore,
    \eqref{eq:app-second-state-second-derivative-expanded} reduces to
    \[
        0
        =
        \frac{\mu(R)-\nu(R)}{2\mu(R)}
        -
        a''(0)\frac{R^2\nu(R)}{2}.
    \]
    Solving for \(a''(0)\), we obtain
    \begin{equation}
        a''(0)
        =
        \frac{
            \mu(R)-\nu(R)
        }{
            R^2\mu(R)\nu(R)
        }.
        \label{eq:app-m-second-derivative}
    \end{equation}

    Finally, since \(a(t)\) is smooth and even, Taylor's theorem gives
    \(
        a(t)
        =
        a(0)
        +
        \frac{1}{2}a''(0)t^2
        +
        O_R(t^4).\)
    Recalling that \(a(0)=1\), \(t^2=\delta\), and
    \(a_{\delta}=a(\sqrt{\delta})\), we conclude from
    \eqref{eq:app-m-second-derivative} that
    \[
        a_{\delta}
        =
        1
        +
        \frac{
            \mu(R)-\nu(R)
        }{
            2R^2\mu(R)\nu(R)
        }
        \delta
        +
        O_R(\delta^2),
    \]
    which proves \eqref{eq:app-m-expansion}.
\end{proof} 

 We are now ready to establish Proposition \ref{prop:asymptotic-radial-bias}.

\begin{proof}[Proof of Proposition \ref{prop:asymptotic-radial-bias}]
    By Lemmas~\ref{lem:app-smooth-branch} and
    \ref{lem:app-m-expansion}, we have
    \[
        a_{\delta}
        =
        1
        +
        \frac{
            \mu(R)-\nu(R)
        }{
            2R^2\mu(R)\nu(R)
        }
        \delta
        +
        O_R(\delta^2),\qquad\tau_{\delta}^2
        =
        u_{\delta}
        =
        \frac{1}{\mu(R)}
        +
        O_R(\delta).
    \]
    Therefore,
    \begin{align*}
        r_{\mathrm{MLE}}(\delta,R)^2
        &=
        a_{\delta}^2R^2
        +
        \delta\tau_{\delta}^2
        \\
        &=
        R^2
        +
        \delta
        \left\{
            \frac{
                \mu(R)-\nu(R)
            }{
                \mu(R)\nu(R)
            }
            +
            \frac{1}{\mu(R)}
        \right\}
        +
        O_R(\delta^2)
        =
        R^2
        +
        \frac{\delta}{\nu(R)}
        +
        O_R(\delta^2).
    \end{align*}
    Since \(R>0\) is fixed, as \(x\to0\) we can write 
    $
        \sqrt{R^2+x}
        =
        R+\frac{x}{2R}+O_R(x^2).$  Hence
    \[
        r_{\mathrm{MLE}}(\delta,R)
        =
        R
        +
        \frac{\delta}{2R\nu(R)}
        +
        O_R(\delta^2).
    \]
    Recalling that \(\nu(R)=q'(R)\) proves the result.
\end{proof} 
        
\section{Technical Lemmas}
\begin{lem}[Bounding $\ell_2$ error by direction error and norm error] \label{lem:dirplusnorm}
    Under the convention $\frac{\bm{0}}{0}=\be_1$, it holds for any $\bu,\bv\in \mathbb{R}^d$ that
    \[
        \|\bu-\bv\|_2 \le \min\{\|\bu\|_2,\|\bv\|_2\}\bigg\|\frac{\bu}{\|\bu\|_2}-\frac{\bv}{\|\bv\|_2}\bigg\|_2+\big|\|\bu\|_2-\|\bv\|_2\big|.\] 
\end{lem}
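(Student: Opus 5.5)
The plan is to insert the intermediate vector obtained by rescaling one of the two vectors to the norm of the other, and then apply the triangle inequality. The zero cases are handled first. If $\bu=\bm{0}$, the minimum is $0$, so the right-hand side equals $\|\bv\|_2=\|\bu-\bv\|_2$. The case $\bv=\bm{0}$ is symmetric. The convention $\bm{0}/0=\be_1$ only ensures that the direction term is well defined; it is multiplied by zero and never matters.

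Now suppose both vectors are nonzero, and by symmetry of the statement assume without loss of generality that $\|\bu\|_2\le\|\bv\|_2$. Write $\bar{\bu}=\bu/\|\bu\|_2$ and $\bar{\bv}=\bv/\|\bv\|_2$, and compare through $\bw:=\|\bu\|_2\bar{\bv}$, which has the smaller norm and the direction of $\bv$. The triangle inequality gives $\|\bu-\bv\|_2\le\|\bu-\bw\|_2+\|\bw-\bv\|_2$. The first term is $\|\bu\|_2\|\bar{\bu}-\bar{\bv}\|_2=\min\{\|\bu\|_2,\|\bv\|_2\}\|\bar{\bu}-\bar{\bv}\|_2$. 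The second term is $\|(\|\bu\|_2-\|\bv\|_2)\bar{\bv}\|_2=\big|\|\bu\|_2-\|\bv\|_2\big|$. Adding the two pieces gives the claim.

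There is no real obstacle. The only point needing care is choosing which vector to rescale: we rescale onto the shorter norm, which is exactly what produces the $\min$ rather than an arbitrary one of the two norms. The degenerate zero cases also need the separate check described above.
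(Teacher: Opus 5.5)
Your proposal is correct and follows essentially the same route as the paper: triangle inequality through the rescaled vector $\|\bu\|_2\bv/\|\bv\|_2$, with the zero cases checked separately. The only difference is cosmetic: the paper derives the bound with each of $\|\bu\|_2$ and $\|\bv\|_2$ and then takes the minimum, whereas you assume without loss of generality that $\|\bu\|_2\le\|\bv\|_2$ and rescale onto the shorter norm directly.
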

\begin{proof}
We first treat $\bu,\bv\ne \bm{0}$. By triangle inequality,
\begin{align}\label{uupper1}
    \|\bu-\bv\|_2 &\le \bigg\|\bu-\frac{\bv\|\bu\|_2}{\|\bv\|_2}\bigg\|_2 + \bigg\|\frac{\bv\|\bu\|_2}{\|\bv\|_2}-\bv\bigg\|_2 = \|\bu\|_2\bigg\|\frac{\bu}{\|\bu\|_2}-\frac{\bv}{\|\bv\|_2}\bigg\|_2 + \big|\|\bu\|_2-\|\bv\|_2\big|. 
\end{align}
Swapping the role of $\bu,\bv$ yields
\begin{align}\label{vupper2}
     \|\bu-\bv\|_2 \le\|\bv\|_2\bigg\|\frac{\bu}{\|\bu\|_2}-\frac{\bv}{\|\bv\|_2}\bigg\|_2 + \big|\|\bu\|_2-\|\bv\|_2\big|. 
\end{align}
Combining (\ref{uupper1})--(\ref{vupper2}) yields the claimed inequality. It remains to check the case of $\bu=\bm{0}$ or $\bv=\bm{0}$, under the convention $\frac{\bm{0}}{0}=\be_1$. In this case, we have $\|\bu-\bv\|_2 = |\|\bu\|_2-\|\bv\|_2|$, hence the claim follows since $\min\{\|\bu\|_2,\|\bv\|_2\}\|\frac{\bu}{\|\bu\|_2}-\frac{\bv}{\|\bv\|_2}\|_2$ is always non-negative. 
\end{proof}
\begin{lem}[Lower bounding $\ell_2$ error by direction error] \label{lem:lowerl2direction}
    Under the convention $\frac{\bm{0}}{0}=\be_1$, it holds for any $\bu,\bv\in \mathbb{R}^d$ that
    \[
        \|\bu-\bv\|_2\ge \frac{1}{2}\max\{\|\bu\|_2,\|\bv\|_2\}\bigg\|\frac{\bu}{\|\bu\|_2}-\frac{\bv}{\|\bv\|_2}\bigg\|_2. 
    \]
\end{lem}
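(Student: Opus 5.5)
We reduce the claim to a planar problem about two vectors. Then we bound the direction error by the distance using two elementary estimates, one for each of $\|\bu\|_2$ and $\|\bv\|_2$.

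\textbf{Degenerate cases.} Suppose first that one vector vanishes, say $\bu=\bm{0}$. Then $\|\bu-\bv\|_2=\|\bv\|_2$. Any two unit vectors (including $\be_1$ under the convention) are at distance at most $2$, so the right-hand side is at most $\frac12\|\bv\|_2\cdot 2=\|\bv\|_2$. This settles the case, and the case $\bv=\bm{0}$ is symmetric. From now on we assume $\bu,\bv\ne\bm{0}$.

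\textbf{Reduction.} By symmetry of the statement in $(\bu,\bv)$, we may assume $\|\bu\|_2\ge\|\bv\|_2$. It then suffices to show
\[
\|\bu-\bv\|_2\ge \tfrac12\|\bu\|_2\,\Big\|\tfrac{\bu}{\|\bu\|_2}-\tfrac{\bv}{\|\bv\|_2}\Big\|_2 .
\]
Write $a=\|\bu\|_2\ge b=\|\bv\|_2>0$, with unit directions $\hat\bu,\hat\bv$.

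\textbf{Key estimate.} By the triangle inequality,
\[
a\|\hat\bu-\hat\bv\|_2=\|\bu-a\hat\bv\|_2\le \|\bu-\bv\|_2+\|\bv-a\hat\bv\|_2=\|\bu-\bv\|_2+(a-b).
\]
The reverse triangle inequality gives $a-b=\|\bu\|_2-\|\bv\|_2\le\|\bu-\bv\|_2$. Combining the two, $a\|\hat\bu-\hat\bv\|_2\le 2\|\bu-\bv\|_2$, which is exactly the desired inequality.

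There is no genuine obstacle in this argument. The only care needed is to route the comparison through the larger norm, which is what produces the $\max$, and to handle the zero-vector convention separately.
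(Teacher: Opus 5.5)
Your proof is correct and has the same structure as the paper's. The degenerate case is handled by bounding the distance between unit vectors by $2$, and the nondegenerate case rests on $\|\bu/\|\bu\|_2-\bv/\|\bv\|_2\|_2\le 2\|\bu-\bv\|_2/\max\{\|\bu\|_2,\|\bv\|_2\}$. The paper imports this bound as a standard fact from \cite{matsumoto2025learning}, whereas you derive it directly, using the triangle-inequality step already in \eqref{uupper1} plus the reverse triangle inequality, so your argument is self-contained.
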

\begin{proof}
    We first treat $\bu,\bv\ne \bm{0}$, then a standard bound (e.g., \cite[Fact 13]{matsumoto2025learning}) gives 
    \[
        \bigg\|\frac{\bu}{\|\bu\|_2}-\frac{\bv}{\|\bv\|_2}\bigg\|_2\le \frac{2\|\bu-\bv\|_2}{\max\{\|\bu\|_2,\|\bv\|_2\}}.
    \]
    Rearranging yields the claimed inequality. It remains to check $\bu=\bm{0}$ or $\bv=\bm{0}$, under the convention $\frac{\bm{0}}{0}=\be_1$. The claimed inequality holds trivially if $\bu=\bv=\bm{0}$. We now validate the case of $\bu\ne \bm{0}$, $\bv=\bm{0}$, under which we have 
    \[
       \frac{1}{2}\max\{\|\bu\|_2,\|\bv\|_2\}\bigg\|\frac{\bu}{\|\bu\|_2}-\frac{\bv}{\|\bv\|_2}\bigg\|_2=\frac{1}{2}\|\bu\|_2\bigg\|\frac{\bu}{\|\bu\|_2}-\be_1\bigg\|_2\le \|\bu\|_2=\|\bu-\bv\|_2,
    \]
    again yielding the desired inequality. The proof is complete. 
\end{proof} 
\begin{lem}[Standard bound on maximum of Gaussian variables]\label{lem:maxgaussian}
    If $g_1,...,g_n$ are $N(0,1)$ variables, then with probability at least $1-2n^{-1}$, 
    \[\max_{1\le i\le n}|g_i|\le 2\sqrt{\log n}.\]
\end{lem}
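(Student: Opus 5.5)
We treat each $g_i$ separately with the standard Gaussian tail estimate and then take a union bound over $i\in[n]$. Independence is not needed, so this works for arbitrary (possibly dependent) $N(0,1)$ variables.

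For a single $g\sim N(0,1)$ and any $u>0$, the Chernoff bound gives $\mathbb{P}(|g|>u)\le 2e^{-u^2/2}$. To see this, note that $\mathbb{P}(g>u)\le e^{-\lambda u}\mathbb{E}e^{\lambda g}=e^{-\lambda u+\lambda^2/2}$. Optimizing at $\lambda=u$ gives $e^{-u^2/2}$, and symmetry supplies the factor $2$. Taking $u=2\sqrt{\log n}$ yields
\[
\mathbb{P}\big(|g_i|>2\sqrt{\log n}\big)\le 2e^{-2\log n}=2n^{-2}.
\]

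The union bound over the $n$ indices then gives
\[
\mathbb{P}\Big(\max_{1\le i\le n}|g_i|>2\sqrt{\log n}\Big)\le n\cdot 2n^{-2}=2n^{-1},
\]
which is exactly the claim.

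The only degenerate case is $n=1$. There the threshold is $0$ and the claimed probability $1-2n^{-1}=-1$ is negative, so the statement holds trivially. No step presents a real obstacle; the only point to check is that the constant $2$ in the threshold leaves enough slack ($n^{-2}$ per index) to absorb the union over $n$ terms.
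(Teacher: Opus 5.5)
Your proposal is correct and follows the paper's proof: the Gaussian tail bound $\mathbb{P}(|g_i|\ge t)\le 2e^{-t^2/2}$, a union bound over $i\in[n]$, and the choice $t=2\sqrt{\log n}$. The paper cites the tail bound as standard rather than deriving it by Chernoff, and your remarks on independence and the trivial case $n=1$ are also correct.
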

\begin{proof}
    By standard Gaussian tail bound,  $\mathbb{P}(|g_i|\ge t)\le 2\exp(-\frac{t^2}{2})$ holds for any $i\in[n]$. Taking a union bound over $i$ yields \[\mathbb{P}\bigg(\max_{1\le i\le n}|g_i|\ge t\bigg)\le 2n\exp\Big(-\frac{t^2}{2}\Big).\] Setting $t=2\sqrt{\log n}$ yields the claim. 
\end{proof}
\begin{lem}
    \label{lem:operconcen}
    Let $\bx_1,...,\bx_n$ be i.i.d. $N(\bm{0},\bI_d)$ vectors. Under $n\ge d$, there exists universal constant $C$ such that 
    \[\mathbb{P}\bigg(\bigg\|\frac{1}{n}\sum_{i=1}^n\bx_i\bx_i^\top\bigg\|_{\rm op}\le C,~\frac{1}{n}\sum_{i=1}^n\|\bx_i\|_2\le C\sqrt{d}\bigg)\ge 1-2e^{-d}.\]
\end{lem}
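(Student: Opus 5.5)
Both bounds are standard Gaussian concentration facts. I will prove them separately, each with failure probability at most $e^{-d}$, and take a union bound.

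\emph{Operator norm.} Let $\bX\in\mathbb{R}^{n\times d}$ have rows $\bx_i^\top$, so that $\|\frac1n\sum_{i=1}^n\bx_i\bx_i^\top\|_{\rm op}=\|\bX\|_{\rm op}^2/n$. The singular-value bound for Gaussian matrices (Gordon's inequality plus Gaussian concentration of the $1$-Lipschitz map $\bX\mapsto\|\bX\|_{\rm op}$) gives
\[
\|\bX\|_{\rm op}\le\sqrt n+\sqrt d+s \quad\text{with probability at least } 1-e^{-s^2/2}.
\]
I will take $s=\sqrt{2d}$. Under $n\ge d$ this yields $\|\bX\|_{\rm op}\le(2+\sqrt2)\sqrt n$, so the operator norm is at most $(2+\sqrt2)^2\le 12$, with probability at least $1-e^{-d}$.

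Alternatively, I could use an $\frac14$-net $\mathcal N$ of $\mathbb S^{d-1}$ with $|\mathcal N|\le 9^d$. For each fixed $\bu\in\mathcal N$, $\sum_{i=1}^n(\bx_i^\top\bu)^2\sim\chi^2_n$. The Laurent--Massart bound gives $\sum_{i=1}^n(\bx_i^\top\bu)^2\le n+2\sqrt{nx}+2x$ with probability at least $1-e^{-x}$. Taking $x=d(1+\log 9)$, a union bound over $\mathcal N$, and the standard net comparison $\|\bM\|_{\rm op}\le 2\max_{\bu\in\mathcal N}\bu^\top\bM\bu$ for symmetric $\bM$, this again gives a constant bound with failure probability at most $e^{-d}$, using $n\ge d$.

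\emph{Average of norms.} The map $(\bx_1,\dots,\bx_n)\mapsto\frac1n\sum_{i=1}^n\|\bx_i\|_2$ is Lipschitz with respect to the Euclidean norm on $\mathbb{R}^{nd}$, with constant $1/\sqrt n$ by Cauchy--Schwarz. Its mean satisfies $\mathbb{E}\|\bx_i\|_2\le\sqrt d$. Gaussian concentration for Lipschitz functions then gives
\[
\frac1n\sum_{i=1}^n\|\bx_i\|_2\le\sqrt d+\sqrt{2d/n}\le(1+\sqrt2)\sqrt d
\]
with probability at least $1-e^{-d}$. Even more simply, Cauchy--Schwarz gives $\frac1n\sum_{i=1}^n\|\bx_i\|_2\le\big(\frac1n\sum_{i=1}^n\|\bx_i\|_2^2\big)^{1/2}$, and $\sum_{i=1}^n\|\bx_i\|_2^2\sim\chi^2_{nd}$, which is at most $5nd$ with probability at least $1-e^{-nd}\ge 1-e^{-d}$ by Laurent--Massart.

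\emph{Combining.} A union bound over the two events gives probability at least $1-2e^{-d}$, with $C=12$. The only point needing care is choosing the deviation parameter so that each failure probability is at most $e^{-d}$, and using $n\ge d$ to absorb $\sqrt d$ into $\sqrt n$. I do not expect a real obstacle.
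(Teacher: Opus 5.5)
Your proof is correct. For the operator norm it matches the paper's argument: the paper cites Vershynin's Theorem 4.4.5 with $t=\sqrt d$ rather than spelling out Gordon's inequality plus Lipschitz concentration.

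The difference is in the second bound. The paper uses no additional probabilistic estimate there. It derives the bound deterministically on the same event:
\[
\frac1n\sum_{i=1}^n\|\bx_i\|_2\le\Big(\frac1n\sum_{i=1}^n\|\bx_i\|_2^2\Big)^{1/2}=\frac{\|\bX\|_{\rm F}}{\sqrt n}\le\frac{\sqrt d\,\|\bX\|_{\rm op}}{\sqrt n}\le C\sqrt d,
\]
using that $\bX$ has at most $d$ nonzero singular values. Your ``even more simply'' Cauchy--Schwarz step is exactly the first inequality above. You stopped one step short of the observation $\|\bX\|_{\rm F}\le\sqrt d\,\|\bX\|_{\rm op}$, and instead ran a separate $\chi^2_{nd}$ (or Lipschitz-concentration) bound followed by a union bound.

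What your route buys is explicit constants, and a second bound with a sharper constant ($\sqrt5$ or $1+\sqrt2$) that does not need $n\ge d$. The paper's route is more economical because it needs only one event. In your own setup it would give both bounds with $C=(2+\sqrt2)^2\le 12$ on the single event $\|\bX\|_{\rm op}\le(2+\sqrt2)\sqrt n$, with failure probability $e^{-d}$ rather than $2e^{-d}$.
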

\begin{proof}
   Let $\bX^\top=[\bx_1,\bx_2,...,\bx_n]$, which is a $d\times n$ matrix of i.i.d. $N(0,1)$ variables. Then \[\bigg\|\frac{1}{n}\sum_{i=1}^n\bx_i\bx_i^\top\bigg\|_{\rm op} = \bigg\|\frac{1}{n}\bX^\top\bX\bigg\|_{\rm op}=\frac{1}{n}\|\bX\|_{\rm op}^2.\] 
   By \cite[Theorem 4.4.5]{vershynin2018high}, under $n\ge d$,  $\|\bX\|_{\rm op}\le C\sqrt{n}$ holds with probability at least $1-2e^{-d}$. This directly implies the desired $\|\frac{1}{n}\sum_{i=1}^n \bx_i\bx_i^\top\|_{\rm op} =O(1)$. We now show that this also implies the desired bound on $\frac{1}{n}\sum_{i=1}^n\|\bx_i\|_2$. In fact, 
   \[\frac{1}{n}\sum_{i=1}^n\|\bx_i\|_2\le \bigg(\frac{1}{n}\sum_{i=1}^n\|\bx_i\|_2^2\bigg)^{1/2} = \frac{\|\bX\|_{\rm F}}{\sqrt{n}}\le \frac{\sqrt{d}\|\bX\|_{\rm op}}{\sqrt{n}}\le C\sqrt{d}.\] The proof is complete. 
\end{proof}

\begin{lem}[Population gradient of the logistic loss]
\label{lem:population-gradient}
For every \(r>0\) and every \(\bv\in\mathbb S^{d-1}\), the gradient of the population loss $L(\btheta)=\mathbb{E}L_n(\btheta)$ is given by 
\[\nabla L(r\bv)
    =
    q(r)\bv-q(R)\bv^*.\]
\end{lem}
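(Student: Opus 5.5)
The plan is to compute $\nabla L$ directly by differentiating under the expectation, and then reduce each resulting vector expectation to a one-dimensional Gaussian integral using rotational invariance. Since $L(\btheta)=\mathbb{E}[\log(1+e^{\bx^\top\btheta})-y\bx^\top\btheta]$ and the integrand's gradient $(s(\bx^\top\btheta)-y)\bx$ is dominated by $\|\bx\|_2$, which is integrable, dominated convergence gives
\[
\nabla L(\btheta)=\mathbb{E}\big[(s(\bx^\top\btheta)-y)\bx\big]=\mathbb{E}\big[s(\bx^\top\btheta)\bx\big]-\mathbb{E}\big[s(\bx^\top\btheta^*)\bx\big].
\]
For the second term we use the tower property with $\mathbb{E}[y\mid\bx]=s(\bx^\top\btheta^*)$.

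The key step is a generic identity. For any unit vector $\bw$ and any $r\ge 0$,
\[
\mathbb{E}[s(r\bx^\top\bw)\bx]=q(r)\bw.
\]
To prove it, decompose $\bx=(\bx^\top\bw)\bw+\bP_{\bw}^\perp\bx$. Here $g:=\bx^\top\bw\sim N(0,1)$, and $\bP_{\bw}^\perp\bx$ is a centered Gaussian vector independent of $g$. The component along $\bw$ gives $\mathbb{E}[g\,s(rg)]\bw=q(r)\bw$. The orthogonal component contributes $\mathbb{E}[s(rg)]\,\mathbb{E}[\bP_{\bw}^\perp\bx]=\bm{0}$ by independence.

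Applying the identity with $(r,\bw)=(r,\bv)$ to the first term and with $(R,\bv^*)$ to the second gives $\nabla L(r\bv)=q(r)\bv-q(R)\bv^*$.

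There is essentially no obstacle here. The only points needing care are the interchange of gradient and expectation, which is justified by the integrable envelope $\|\bx\|_2$ above, and the independence of the projection onto $\bw^\perp$ from $g$, which holds because the pair is jointly Gaussian and uncorrelated.
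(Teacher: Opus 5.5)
Your proposal is correct and follows essentially the same route as the paper: interchange gradient and expectation, replace $y$ by $s(\bx^\top\btheta^*)$ via the tower property, and reduce $\mathbb{E}[s(\bx^\top\bu)\bx]$ to $q(\|\bu\|_2)\,\bu/\|\bu\|_2$ by rotational invariance. Your integrable envelope $\|\bx\|_2$ and your orthogonal-decomposition/independence argument just spell out details that the paper states in one line.
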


\begin{proof}
By exchanging the expectation and differentiation and the definition of the logistic model, 
\begin{align}
    \nabla L(r\bv) = \nabla \mathbb{E}L_n(\btheta) = \mathbb{E}\nabla L_n(\btheta) = \mathbb{E}\bigg[\frac{1}{n}\sum_{i=1}^n\big(s(\bx_i^\top\btheta)-y_i\big)\bx_i\bigg]=\mathbb{E}\big[\big(s(\bx_i^\top\btheta)-s(\bx_i^\top\btheta^*)\big)\bx_i\big].\label{exchan}
\end{align}
Since \(\bx_i\sim N(\bm{0},\bI_d)\),  for any $\bu\in\mathbb{R}^d\setminus \{\bm{0}\}$, rotational invariance gives 
\[
    \mathbb{E}[s(\bx_i^\top\bu)\bx_i] = \mathbb{E}\bigg[s(\bx_i^\top\bu)\bx_i^\top\frac{\bu}{\|\bu\|_2}\bigg]\frac{\bu}{\|\bu\|_2} = \mathbb{E}_{g\sim N(0,1)}\bigg[s(\|\bu\|_2g)g\bigg] \frac{\bu}{\|\bu\|_2} = q(\|\bu\|_2)\frac{\bu}{\|\bu\|_2}. 
\]
Applying this formula to (\ref{exchan}) yields the desired claim. 
\end{proof}
The following lemma identifies the order of $q'(r)= \mathbb{E}_{g\sim N(0,1)}[g^2s'(rg)]$. 
\begin{lem}[Order of $q'(r)$ {\cite[Lemma 15]{chen2026finite}}]
\label{lem:qprime}
We have that $q'(r)\asymp (1+r)^{-3}$, i.e., there exist universal constants $c$ and $C$ such that 
\[
    \frac{c}{(1+r)^3}\le q'(r)\le\frac{C}{(1+r)^3},\quad \forall r>0.  
\]
\end{lem}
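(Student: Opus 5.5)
The plan is a direct computation: write $q'(r)$ as a one-dimensional Gaussian integral and treat the two regimes $r\le 1$ and $r\ge 1$ separately, using $(1+r)^3\asymp 1$ in the first and $(1+r)^3\asymp r^3$ in the second. By symmetry of $g$ and evenness of $s'$,
\[
q'(r)=\mathbb{E}_{g\sim N(0,1)}[g^2s'(rg)]=\frac{2}{\sqrt{2\pi}}\int_0^\infty g^2 s'(rg)e^{-g^2/2}\,dg .
\]
Two elementary facts about $s'$ will be used: $0<s'(u)\le \min\{1/4,e^{-|u|}\}$, and $s'$ is even and nonincreasing on $[0,\infty)$.

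\emph{Case $0<r\le 1$.} The upper bound is immediate: $q'(r)\le \frac14\mathbb{E}g^2=\frac14$. For the lower bound, $rg\le g$ for $g\ge0$, so monotonicity gives $s'(rg)\ge s'(g)$. Hence $q'(r)\ge \mathbb{E}[g^2s'(g)]$, which is a strictly positive universal constant. Since $(1+r)^3\in[1,8]$ here, both bounds have the form $c(1+r)^{-3}\le q'(r)\le C(1+r)^{-3}$.

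\emph{Case $r\ge 1$.} Substitute $u=rg$:
\[
q'(r)=\frac{2}{\sqrt{2\pi}\,r^3}\int_0^\infty u^2 s'(u)e^{-u^2/(2r^2)}\,du .
\]
For the upper bound, drop the Gaussian factor ($\le 1$) and use $s'(u)\le e^{-u}$. This gives $\int_0^\infty u^2e^{-u}\,du=2$, so $q'(r)\le C r^{-3}$; this is also the case $p=2$ of \eqref{T4bound1}. For the lower bound, restrict the integral to $u\in[0,1]$. There $e^{-u^2/(2r^2)}\ge e^{-1/2}$ because $r\ge1$, and $s'(u)\ge s'(1)$ by monotonicity. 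This yields $q'(r)\ge c\,r^{-3}$ with $c=\frac{2}{\sqrt{2\pi}}e^{-1/2}s'(1)/3$. Finally, $r^3\le (1+r)^3\le 8r^3$ for $r\ge1$ converts these bounds into the claimed form.

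There is no genuine obstacle here. The only step needing a little care is the lower bound for large $r$: one must choose the truncation window so that it is independent of $r$ after the rescaling $u=rg$, which is what gives exactly the $r^{-3}$ order. The boundedness of $e^{-u^2/(2r^2)}$ from below on $[0,1]$, uniformly in $r\ge1$, handles this.
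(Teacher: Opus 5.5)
Your proof is correct and complete. The paper gives no proof of this lemma of its own: it imports the statement verbatim from \cite[Lemma 15]{chen2026finite}, so there is no in-paper argument to compare against.

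All the individual steps check out. These are the rescaling $u=rg$ for $r\ge1$, the bound $s'(u)\le e^{-u}$ for the upper estimate, and the truncation to $u\in[0,1]$ for the lower estimate. On that window $e^{-u^2/(2r^2)}\ge e^{-1/2}$ and $s'(u)\ge s'(1)$ hold uniformly in $r\ge1$. For $r\le1$ you use the monotonicity comparison $s'(rg)\ge s'(g)$ on $g\ge0$. The final conversion from $1$ or $r^{-3}$ to $(1+r)^{-3}$ is also fine.

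What your write-up adds is self-containedness. Your large-$r$ upper bound is exactly the $p=2$ case of \eqref{T4bound1}, which already appears in the appendix. The lower bound $q'(r)\gtrsim(1+r)^{-3}$ is the half the paper leans on most, for instance in Lemma \ref{lem:radial-score}, Lemma \ref{lem:profile-local}, and the final inversion $|\hat R_{\rm DB}-R|\lesssim R^3|\hat q-q(R)|$. That half is supplied entirely by your $r$-independent truncation window.

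The only detail you leave implicit is differentiating under the expectation to get $q'(r)=\mathbb{E}[g^2s'(rg)]$. This follows immediately from dominated convergence, since $|g^2s'(rg)|\le g^2/4$.
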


\begin{lem}[Bernstein's inequality {\cite[Theorem 2.10]{13concen}}]
    \label{lem:bernstein210} Let $X_1,...,X_n$ be independent random variables, and assume that for some $v,c>0$, $  \sum_{i=1}^n \mathbb{E}X_i^2 \le v$ and $\sum_{i=1}^n \mathbb{E}|X_i|^q\le \frac{q!}{2}vc^{q-2}~~(\textrm{for all integers }q\ge 3)$
    hold, then we have
    \[
        \mathbb{P}\bigg(\bigg|\sum_{i=1}^n(X_i-\mathbb{E}X_i)\bigg|\ge \sqrt{2vt}+ct\bigg) \le 2\exp(-t),\quad \forall t>0.\]
\end{lem}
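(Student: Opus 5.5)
This is a cited result (\cite[Theorem 2.10]{13concen}), but I would prove it by the Cram\'er--Chernoff method: bound the log-moment generating function of the centered sum by a sub-gamma function, then invert. Write $S:=\sum_{i=1}^n(X_i-\mathbb{E}X_i)$. It suffices to show the one-sided bound
\[
\mathbb{P}\big(S\ge \sqrt{2vt}+ct\big)\le e^{-t}.
\]
The hypotheses only involve $\mathbb{E}X_i^2$ and $\mathbb{E}|X_i|^q$, so they are unchanged under $X_i\mapsto -X_i$. Hence the same bound holds for $-S$, and a union bound gives the factor $2$.

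\textbf{Step 1 (moment generating function).} Fix $\lambda\in(0,1/c)$. Since $\log u\le u-1$,
\[
\log\mathbb{E}e^{\lambda(X_i-\mathbb{E}X_i)}=\log\mathbb{E}e^{\lambda X_i}-\lambda\mathbb{E}X_i\le \mathbb{E}\big[e^{\lambda X_i}-1-\lambda X_i\big].
\]
Expanding the exponential and using Fubini (justified by absolute convergence under the moment bounds), this is at most
\[
\sum_{q\ge 2}\frac{\lambda^q\,\mathbb{E}|X_i|^q}{q!}.
\]
Summing over $i$, the $q=2$ term is at most $\lambda^2 v/2$, and the assumption $\sum_i\mathbb{E}|X_i|^q\le \frac{q!}{2}vc^{q-2}$ bounds each term with $q\ge3$ by $\frac{v\lambda^2}{2}(c\lambda)^{q-2}$. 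Summing the geometric series and using independence,
\[
\log\mathbb{E}e^{\lambda S}\le \psi(\lambda):=\frac{v\lambda^2}{2(1-c\lambda)}.
\]

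\textbf{Step 2 (Chernoff bound and choice of $\lambda$).} Markov's inequality gives $\mathbb{P}(S\ge u)\le \exp(-\lambda u+\psi(\lambda))$. Take $u=\sqrt{2vt}+ct$, set $a:=\sqrt{2t/v}$, and choose
\[
\lambda=\frac{a}{1+ca}\in(0,1/c).
\]
Then $1-c\lambda=(1+ca)^{-1}$, so $\psi(\lambda)=\frac{va^2}{2(1+ca)}=\frac{t}{1+ca}$. Also $a\sqrt{2vt}=2t$, so $\lambda u=\frac{t(2+ca)}{1+ca}$. Therefore $\lambda u-\psi(\lambda)=t$, which yields the one-sided bound $e^{-t}$.

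The only point requiring care is this choice of $\lambda$, which inverts the Legendre transform of the sub-gamma function $\psi$. Once it is made, the algebra closes exactly as shown, and everything else is routine.
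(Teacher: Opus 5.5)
Your proof is correct and follows the same route as the argument behind the cited \cite[Theorem 2.10]{13concen}, which the paper quotes without proof: bound $\log\mathbb{E}e^{\lambda S}$ by the sub-gamma function $\frac{v\lambda^2}{2(1-c\lambda)}$ using $\log u\le u-1$ and the power series, then invert the Chernoff bound (your explicit choice $\lambda=a/(1+ca)$ does give $\lambda u-\psi(\lambda)=t$). The only difference is that the reference works one-sidedly with positive-part moments $\mathbb{E}(X_i)_+^q$, using $e^u-1-u\le u^2/2$ for $u\le 0$; your absolute-moment hypothesis is invariant under $X_i\mapsto -X_i$, which correctly gives the two-sided bound with the factor $2$ exactly as the paper states it.
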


\begin{lem}[Hanson-Wright inequality {\cite[Theorem 6.2.1]{vershynin2018high}}]
    \label{lem:hanson-wright}
    Let $\bz\sim N(0,\bI_N)$ and $\bA\in\mathbb{R}^{N\times N} $ be deterministic and symmetric. For some universal constant $C>0$, it holds for any $t\ge 0$ that
    \[\mathbb{P}\bigg(\Big|\bz^\top\bA\bz-\tr(\bA)\Big|\le C\big(\|\bA\|_{\rm F}\sqrt{t}+\|\bA\|_{\rm op}t\big)\bigg)\ge 1-2e^{-t}.\]
\end{lem}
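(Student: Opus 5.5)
The plan is to diagonalize the quadratic form and reduce the claim to the Bernstein inequality of Lemma \ref{lem:bernstein210}, which the paper already uses, applied to a sum of independent centered chi-square variables.

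\textbf{Step 1: diagonalize.} Since $\bA$ is symmetric, write $\bA=\bU\bLambda\bU^\top$ with $\bU$ orthogonal and $\bLambda=\mathrm{diag}(\lambda_1,\ldots,\lambda_N)$. By rotational invariance, $\bg:=\bU^\top\bz\sim N(0,\bI_N)$. Hence
\[
\bz^\top\bA\bz-\tr(\bA)=\sum_{k=1}^N\lambda_k(g_k^2-1),
\]
and the summands $X_k:=\lambda_k(g_k^2-1)$ are independent and centered. Moreover $\sum_k\lambda_k^2=\|\bA\|_{\rm F}^2$ and $\max_k|\lambda_k|=\|\bA\|_{\rm op}$.

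\textbf{Step 2: moment bounds.} For every integer $q\ge2$, Jensen's inequality and the Gaussian moment formula $\mathbb{E}g^{2q}=(2q-1)!!\le 2^q q!$ give
\[
\mathbb{E}|g^2-1|^q\le 2^{q}\,\mathbb{E}g^{2q}\le 4^q q!.
\]
Therefore
\[
\mathbb{E}|X_k|^q\le 4^q q!\,|\lambda_k|^q\le 4^q q!\,\lambda_k^2\,\|\bA\|_{\rm op}^{q-2}.
\]
Summing over $k$, the hypotheses of Lemma \ref{lem:bernstein210} hold with $v=C\|\bA\|_{\rm F}^2$ and $c=C\|\bA\|_{\rm op}$ for a universal $C$. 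The case $q=2$ gives the variance condition, and the cases $q\ge3$ give $\sum_k\mathbb{E}|X_k|^q\le \frac{q!}{2}vc^{q-2}$ after adjusting $C$.

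\textbf{Step 3: conclude.} Lemma \ref{lem:bernstein210} then yields, for every $t>0$,
\[
\mathbb{P}\Big(\big|\bz^\top\bA\bz-\tr(\bA)\big|\ge \sqrt{2vt}+ct\Big)\le 2e^{-t}.
\]
Since $\sqrt{2vt}+ct\lesssim \|\bA\|_{\rm F}\sqrt t+\|\bA\|_{\rm op}t$, this is the claimed bound. The case $t=0$ is trivial because the stated probability lower bound is then $-1$.

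The only point needing care is Step 2: the constants must be arranged so that the $q$-th moment bound has exactly the form $\frac{q!}{2}vc^{q-2}$. This is routine once we use $|\lambda_k|^q\le\lambda_k^2\|\bA\|_{\rm op}^{q-2}$ and absorb the powers $4^q$ into $c^{q-2}$ and $v$.
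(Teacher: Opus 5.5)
Your proof is correct, but it takes a different route from the paper. The paper gives no proof of its own and simply cites Vershynin's Theorem 6.2.1, which is the Hanson--Wright inequality for random vectors with independent, mean-zero, sub-gaussian coordinates. That general proof splits $\bz^\top\bA\bz$ into diagonal and off-diagonal parts. It handles the diagonal with a sub-exponential Bernstein bound, and the off-diagonal chaos by decoupling, comparison with a Gaussian chaos, and an MGF computation.

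You instead use the rotational invariance of $N(\bm{0},\bI_N)$ to diagonalize $\bA$ outright. This reduces everything to the weighted sum $\sum_k\lambda_k(g_k^2-1)$ of independent centered $\chi^2_1$ variables, so the moment form of Bernstein's inequality (Lemma~\ref{lem:bernstein210}, already in the paper) finishes the job.

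Your constant bookkeeping does close. From $\sum_k\mathbb{E}|X_k|^q\le 4^q q!\,\|\bA\|_{\rm F}^2\|\bA\|_{\rm op}^{q-2}$ you can take $v=32\|\bA\|_{\rm F}^2$ and $c=4\|\bA\|_{\rm op}$; the actual variance is only $2\|\bA\|_{\rm F}^2$. This matches $\frac{q!}{2}vc^{q-2}$ exactly and gives the claim with $C=8$. The case $\bA=\bm{0}$, excluded by the requirement $v,c>0$ in Lemma~\ref{lem:bernstein210}, is trivial.

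What your route buys is a short, self-contained argument with explicit constants, using only tools already in the paper. What it gives up is generality. For non-Gaussian vectors with independent coordinates, $\mathbf{U}^\top\bz$ no longer has independent coordinates, so the diagonalization step breaks; this is why the cited theorem needs decoupling. In the paper's only application, (\ref{eq:QG-quadratic}), the vector $\bz$ is conditionally standard Gaussian, so your Gaussian-only version is all that is needed.
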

\begin{lem}[Pinsker's inequality {\cite[Lemma 15.2]{wainwright2019high}}]
\label{lem:pinsker}
Let $P$ and $Q$ be two probability measures on the same measurable
space. Define their total variation distance by
\(\|P-Q\|_{\rm TV}
    :=
    \sup_{A}|P(A)-Q(A)|.\)
Then
\[
    \|P-Q\|_{\rm TV}
    \le
    \sqrt{\frac{1}{2}D_{\rm KL}(P\|Q)},
\]
where
\(
    D_{\rm KL}(P\|Q)
    :=
    \int \log\big(\frac{{\rm d}P}{{\rm d}Q}\big){\rm d}P 
\) denotes the KL-divergence, 
with the convention that $D_{\rm KL}(P\|Q)=+\infty$ if
$P$ is not absolutely continuous with respect to $Q$.
\end{lem}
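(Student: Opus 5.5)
This is Pinsker's inequality for the total variation distance $\|P-Q\|_{\rm TV}=\sup_A|P(A)-Q(A)|$. The plan is to reduce it to the two-point (Bernoulli) case through the data-processing inequality, and then verify the Bernoulli case by elementary calculus.

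\textbf{Trivial case.} If $P$ is not absolutely continuous with respect to $Q$, then $D_{\rm KL}(P\|Q)=+\infty$ and there is nothing to prove. So assume $P\ll Q$.

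\textbf{Reduction to Bernoulli.} Fix a measurable set $A$ and write $p=P(A)$, $q=Q(A)$. Consider the map $x\mapsto \mathbf{1}_A(x)$, which pushes $P$ forward to $\Bern(p)$ and $Q$ forward to $\Bern(q)$. By the data-processing inequality for KL divergence,
\[
D_{\rm KL}(\Bern(p)\|\Bern(q))\le D_{\rm KL}(P\|Q).
\]
For this finite partition $\{A,A^c\}$, the inequality is the log-sum inequality, which follows from convexity of $t\mapsto t\log t$ via Jensen applied to $\frac{dP}{dQ}$ on $A$ and on $A^c$ separately. It therefore suffices to prove $2(p-q)^2\le D_{\rm KL}(\Bern(p)\|\Bern(q))$ and then take the supremum over $A$.

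\textbf{Bernoulli case.} Fix $p\in[0,1]$ and define
\[
f(q):=p\log\frac pq+(1-p)\log\frac{1-p}{1-q}-2(p-q)^2 .
\]
We have $f(p)=0$, and
\[
f'(q)=-\frac pq+\frac{1-p}{1-q}+4(p-q)=(q-p)\Big(\frac{1}{q(1-q)}-4\Big).
\]
Since $q(1-q)\le 1/4$, the factor $\frac{1}{q(1-q)}-4$ is nonnegative. Hence $f'(q)$ has the sign of $q-p$, so $f$ is nonincreasing for $q<p$ and nondecreasing for $q>p$, and thus $f\ge f(p)=0$.

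The endpoint cases $p\in\{0,1\}$ or $q\in\{0,1\}$ are handled by continuity or by the convention $0\log 0=0$; when absolute continuity fails the right-hand side is infinite.

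\textbf{Conclusion.} Combining the two steps gives $|P(A)-Q(A)|^2\le \tfrac12 D_{\rm KL}(P\|Q)$ for every $A$. Taking the supremum over $A$ yields the claim.

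The only mildly delicate point is justifying data processing for general measures, but via the log-sum inequality on a two-set partition it is elementary.
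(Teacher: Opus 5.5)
The paper gives no proof of this lemma. It is quoted from Wainwright's Lemma 15.2 and used only once, in the proof of Theorem~\ref{thm:minmaxlower}, to turn $D_{\rm KL}(P_{R_1}^{\otimes n}\|P_{R_2}^{\otimes n})\le C\kappa^2$ into $\|P_{R_1}^{\otimes n}-P_{R_2}^{\otimes n}\|_{\rm TV}\le 1/2$. So there is no competing route to compare against. What you wrote is the standard self-contained proof (data processing down to the indicator of $A$, then one-variable calculus), and it is correct. Your identity $f'(q)=(q-p)\{1/(q(1-q))-4\}$ checks out. The log-sum step is also valid, once you make explicit two points you only gestured at. First, $D_{\rm KL}(P\|Q)=\int \phi(\tfrac{dP}{dQ})\,dQ$ with $\phi(t)=t\log t\ge -1/e$ is well defined in $(-\infty,+\infty]$, so splitting it over $A$ and $A^c$ is legitimate. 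Second, if $Q(A)=0$ then $P(A)=0$ by absolute continuity, so that piece contributes $0$ on both sides and Jensen is needed only when $Q(A)>0$; the same holds for $A^c$. Compared with the bare citation, your argument makes the paper self-contained and in fact proves a slightly stronger set-wise bound, $|P(A)-Q(A)|\le\sqrt{\tfrac12 D_{\rm KL}(\Bern(P(A))\|\Bern(Q(A)))}$ for every measurable $A$. The citation buys brevity, which is reasonable for a textbook inequality used only as a black box.
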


\begin{lem}[Le Cam's  inequality
{\cite[Corollary 15.6]{wainwright2019high}}]
\label{lem:lecam-two-point}
Let $P_0$ and $P_1$ be two probability measures, and let
$\vartheta(P)\in\mathbb{R}$ be a scalar-valued functional.
Then
\[
    \inf_{\hat{\vartheta}}
    \max_{j\in\{0,1\}}
    \mathbb{E}_{P_j}
    \big|
        \hat{\vartheta}-\vartheta(P_j)
    \big|
    \ge
    \frac{
        |\vartheta(P_1)-\vartheta(P_0)|
    }{4}
    \left(
        1-\|P_0-P_1\|_{\rm TV}
    \right),
\]
where the infimum is taken over all estimators
$\hat{\vartheta}$ measurable with respect to the observation.
\end{lem}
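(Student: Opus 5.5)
The plan is the standard reduction from estimation to two-point testing, followed by the variational characterization of total variation. Write $\vartheta_j:=\vartheta(P_j)$ for $j\in\{0,1\}$ and $\Delta:=|\vartheta_1-\vartheta_0|$. If $\Delta=0$ there is nothing to prove, so assume $\Delta>0$. Fix an arbitrary estimator $\hat\vartheta$. Since $\hat\vartheta$ is arbitrary, it suffices to show $\max_j\mathbb{E}_{P_j}|\hat\vartheta-\vartheta_j|\ge \frac{\Delta}{4}(1-\|P_0-P_1\|_{\rm TV})$ for this fixed $\hat\vartheta$.

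\emph{Step 1 (induced test).} Define the measurable test $\psi:=\mathbf{1}\{|\hat\vartheta-\vartheta_1|<|\hat\vartheta-\vartheta_0|\}\in\{0,1\}$, i.e., the index of the closer hypothesis, with ties sent to $0$. On the event $\{\psi\neq j\}$ we have $|\hat\vartheta-\vartheta_j|\ge|\hat\vartheta-\vartheta_{1-j}|$. The triangle inequality gives $\Delta\le|\hat\vartheta-\vartheta_0|+|\hat\vartheta-\vartheta_1|$, so on this event $|\hat\vartheta-\vartheta_j|\ge\Delta/2$. Since $|\hat\vartheta-\vartheta_j|$ is nonnegative, this yields
\[
\mathbb{E}_{P_j}|\hat\vartheta-\vartheta_j|\ge \frac{\Delta}{2}\,P_j(\psi\ne j),\qquad j\in\{0,1\}.
\]

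\emph{Step 2 (averaging and total variation).} Bound the maximum by the average:
\[
\max_j\mathbb{E}_{P_j}|\hat\vartheta-\vartheta_j|\ge\frac{\Delta}{4}\big(P_0(\psi=1)+P_1(\psi=0)\big).
\]
Let $A:=\{\psi=1\}$. Then
\[
P_0(A)+P_1(A^c)=1-\big(P_1(A)-P_0(A)\big)\ge 1-\sup_{B}|P_0(B)-P_1(B)|=1-\|P_0-P_1\|_{\rm TV},
\]
using the definition of total variation in Lemma \ref{lem:pinsker}. Combining the two displays gives the claim for this $\hat\vartheta$, and taking the infimum over $\hat\vartheta$ finishes the proof.

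\emph{Main obstacle.} There is no real obstacle. The only points needing care are the measurability of $\psi$ (it is a measurable function of $\hat\vartheta$) and the treatment of ties. Ties are harmless because the bound $|\hat\vartheta-\vartheta_j|\ge\Delta/2$ holds on $\{\psi\ne j\}$ whichever index a tie is assigned to.
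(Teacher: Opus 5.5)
Your proof is correct. The paper gives no proof of its own and simply cites Wainwright; your argument is the standard one behind that citation: reduce to the minimum-distance two-point test (separation $\Delta=2\delta$), average the two error probabilities, and use $P_0(A)+P_1(A^c)\ge 1-\|P_0-P_1\|_{\rm TV}$.
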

\end{document}

\subsection{Common magnitude and opposite signs}

Propositions~\ref{prop:app-mle-radius} and
\ref{prop:app-smle-radius} give
\begin{equation}
    r_{\mathrm{MLE}}(\delta,R)-R
    =
    \mathfrak{c}_R\delta
    +
    O_R(\delta^2),
    \qquad
    r_{\mathrm{SMLE}}(\delta,R)-R
    =
    -\mathfrak{c}_R\delta
    +
    O_R(\delta^2),
    \label{eq:app-opposite-biases}
\end{equation}
where
\[
    \mathfrak{c}_R
    =
    \frac{1}{2Rq'(R)}.
\]
Thus the two proportional-limit bias curves have the same first-order
magnitude and opposite signs.

The preceding deterministic expansions imply the following iterated-limit
statement.

\begin{corollary}[Iterated-limit radial biases]
    \label{cor:app-iterated-biases}
    Fix \(R>0\). For every \(\varepsilon>0\),
    \begin{align}
        \lim_{\delta\downarrow0}
        \;
        \limsup_{\substack{n,d\to\infty\\ d/n\to\delta}}
        \mathbb{P}\left(
            \left|
                \frac{
                    \hat R_{\mathrm{MLE}}-R
                }{
                    \delta/(2Rq'(R))
                }
                -
                1
            \right|
            >
            \varepsilon
        \right)
        &=
        0,
        \label{eq:app-iterated-mle}
        \\
        \lim_{\delta\downarrow0}
        \;
        \limsup_{\substack{n,d\to\infty\\ d/n\to\delta}}
        \mathbb{P}\left(
            \left|
                \frac{
                    R-\hat R_{\mathrm{SMLE}}
                }{
                    \delta/(2Rq'(R))
                }
                -
                1
            \right|
            >
            \varepsilon
        \right)
        &=
        0.
        \label{eq:app-iterated-smle}
    \end{align}
\end{corollary}

\begin{proof}
    For each fixed \(\delta>0\),
    \eqref{eq:app-mle-limit-radius} and
    Lemma~\ref{lem:app-smle-fixed-delta} give
    \[
        \hat R_{\mathrm{MLE}}
        \longrightarrow
        r_{\mathrm{MLE}}(\delta,R),
        \qquad
        \hat R_{\mathrm{SMLE}}
        \longrightarrow
        r_{\mathrm{SMLE}}(\delta,R)
    \]
    in probability. The result then follows from
    \eqref{eq:app-opposite-biases}.
\end{proof}

Consequently, at the level of the iterated proportional limit,
\begin{equation}
    r_{\mathrm{MLE}}(\delta,R)-R
    \sim
    \frac{
        3\sqrt{2\pi}
    }{
        2\pi^2
    }
    R^2\delta,
    \qquad
    r_{\mathrm{SMLE}}(\delta,R)-R
    \sim
    -
    \frac{
        3\sqrt{2\pi}
    }{
        2\pi^2
    }
    R^2\delta.
    \label{eq:app-large-R-opposite-biases}
\end{equation}

\begin{rem}[Scope of the proportional-limit calculation]
    \label{rem:app-scope}
    The results above identify the first derivative at
    \(\delta=0\) of the two deterministic proportional-limit radius
    curves. The order of limits is important: for each fixed
    \(\delta>0\), one first lets \(n,d\to\infty\) with
    \(d/n\to\delta\), and only afterwards lets \(\delta\downarrow0\).

    In particular, Corollary~\ref{cor:app-iterated-biases} does not by
    itself prove that
    \[
        \hat R_{\mathrm{MLE}}-R
        =
        \frac{d}{
            2nRq'(R)
        }
        \{1+o_{\mathbb{P}}(1)\}
    \]
    or that
    \[
        R-\hat R_{\mathrm{SMLE}}
        =
        \frac{d}{
            2nRq'(R)
        }
        \{1+o_{\mathbb{P}}(1)\}
    \]
    along an arbitrary triangular array satisfying \(d/n\to0\),
    especially when \(R=R_n\to\infty\). Such joint limits would require
    a uniform version of the proportional asymptotics or a separate
    finite-sample expansion.

    Nevertheless, the calculation rigorously identifies the
    small-aspect-ratio behavior of the proportional-limit bias curves and
    predicts the common normalization
    \[
        \frac{d}{
            2nRq'(R)
        }
    \]
    with opposite signs for the ordinary MLE and the sample-splitting
    MLE.
\end{rem}

%% file: preamble.tex
\usepackage{amssymb,amsmath,amsfonts,latexsym}
\usepackage{amsmath,graphicx,bm,xcolor,url}
\usepackage[caption=false]{subfig} 
\usepackage{array}
\usepackage{verbatim}
\usepackage{bm}
\usepackage{verbatim}
\usepackage{textcomp}
\usepackage{mathrsfs}
\usepackage{relsize}
\usepackage{subfig}
 \usepackage{amsthm}

\catcode`~=11 \def\UrlSpecials{\do\~{\kern -.15em\lower .7ex\hbox{~}\kern .04em}} \catcode`~=13 

\allowdisplaybreaks[3]

\newcommand{\norm}[1]{\left\Vert#1\right\Vert}

\newcommand{\nn}{\nonumber}

\newcommand{\calI}{\mathcal{I}}

\newcommand{\calV}{\mathcal{V}}

\newcommand{\ba}{\mathbf{a}}
\newcommand{\bA}{\mathbf{A}}

\newcommand{\bB}{\mathbf{B}}

\newcommand{\be}{\mathbf{e}}

\newcommand{\bg}{\mathbf{g}}

\newcommand{\bI}{\mathbf{I}}

\newcommand{\bP}{\mathbf{P}}

\newcommand{\bu}{\mathbf{u}}

\newcommand{\bv}{\mathbf{v}}

\newcommand{\bw}{\mathbf{w}}

\newcommand{\bx}{\mathbf{x}}
\newcommand{\bX}{\mathbf{X}}

\newcommand{\bz}{\mathbf{z}}
\newcommand{\bZ}{\mathbf{Z}}

\newcommand{\btheta}{\bm{\theta}}

\DeclareMathOperator{\diag}{diag}

\DeclareMathOperator{\tr}{tr}

\newtheorem{theorem}{Theorem}

\newtheorem{assumption}{Assumption}
\newtheorem{corollary}{Corollary}

\newcommand{\qednew}{\nobreak \ifvmode \relax \else
      \ifdim\lastskip<1.5em \hskip-\lastskip
      \hskip1.5em plus0em minus0.5em \fi \nobreak
      \vrule height0.75em width0.5em depth0.25em\fi}



%% file: libr.bib
@article{sur2019modern,
  title={A modern maximum-likelihood theory for high-dimensional logistic regression},
  author={Sur, Pragya and Cand{\`e}s, Emmanuel J},
  journal={Proceedings of the National Academy of Sciences of the United States of America},
  volume={116},
  number={29},
  pages={14516--14525},
  year={2019}
}

@article{candes2020phase,
  title={The phase transition for the existence of the maximum likelihood estimate in high-dimensional logistic regression},
  author={Cand{\`e}s, Emmanuel J and Sur, Pragya},
  journal={The Annals of Statistics},
  volume={48},
  number={1},
  pages={27--42},
  year={2020},
  publisher={JSTOR}
}

@article{kuchelmeister2024finite,
  title={Finite sample rates for logistic regression with small noise or few samples},
  author={Kuchelmeister, Felix and van de Geer, Sara},
  journal={Sankhya A},
  pages={1--70},
  year={2024},
  publisher={Springer}
}

@article{chardon2024finite,
  title={Finite-sample performance of the maximum likelihood estimator in logistic regression},
  author={Chardon, Hugo and Lerasle, Matthieu and Mourtada, Jaouad},
  journal={arXiv preprint arXiv:2411.02137},
  year={2024}
}

@book{13concen,
    author = {Boucheron, Stéphane and Lugosi, Gábor and Massart, Pascal},
    title = {Concentration Inequalities: A Nonasymptotic Theory of Independence},
    publisher = {Oxford University Press},
    year = {2013},
    month = {February}
}

@inproceedings{hsu2024sample,
  title={On the sample complexity of parameter estimation in logistic regression with normal design},
  author={Hsu, Daniel and Mazumdar, Arya},
  booktitle={The Thirty Seventh Annual Conference on Learning Theory},
  pages={2418--2437},
  year={2024},
  organization={PMLR}
}

@article{zhao2022asymptotic,
  title={The asymptotic distribution of the MLE in high-dimensional logistic models: Arbitrary covariance},
  author={Zhao, Qian and Sur, Pragya and Candes, Emmanuel J},
  journal={Bernoulli},
  volume={28},
  number={3},
  pages={1835--1861},
  year={2022},
  publisher={Bernoulli Society for Mathematical Statistics and Probability}
}

@article{ostrovskii2021finite,
  author  = {Ostrovskii, Dmitrii M. and Bach, Francis},
  title   = {Finite-Sample Analysis of {$M$}-Estimators Using Self-Concordance},
  journal = {Electronic Journal of Statistics},
  volume  = {15},
  number  = {1},
  pages   = {326--391},
  year    = {2021}
}

@article{chinot2020robust,
  title={Robust statistical learning with Lipschitz and convex loss functions},
  author={Chinot, Geoffrey and Lecu{\'e}, Guillaume and Lerasle, Matthieu},
  journal={Probability Theory and related fields},
  volume={176},
  number={3},
  pages={897--940},
  year={2020},
  publisher={Springer}
}

@article{chen2026finite,
  title={Finite-Sample Performance of Gradient Descent in Logistic Regression with Gaussian Design},
  author={Chen, Junren and Mazumdar, Arya},
  journal={arXiv preprint arXiv:2606.21683},
  year={2026}
}

@InProceedings{matsumoto2025learning,
  title = 	 {Learning sparse generalized linear models with binary outcomes via iterative hard thresholding},
  author =       {Matsumoto, Namiko and Mazumdar, Arya},
  booktitle = 	 {Proceedings of Thirty Eighth Conference on Learning Theory},
  pages = 	 {3933--4032},
  year = 	 {2025},
  volume = 	 {291},
  series = 	 {Proceedings of Machine Learning Research},
  month = 	 {30 Jun--04 Jul},
  publisher =    {PMLR}
}

@book{wainwright2019high,
  author    = {Wainwright, Martin J.},
  title     = {High-Dimensional Statistics: A Non-Asymptotic Viewpoint},
  publisher = {Cambridge University Press},
  year      = {2019}
}

@book{vershynin2018high,
  title={High-dimensional probability: An introduction with applications in data science},
  author={Vershynin, Roman},
  volume={47},
  year={2018},
  publisher={Cambridge university press}
}
